\documentclass[a4paper,reqno]{amsart}
\textwidth=5.4in \textheight=8in

\usepackage{enumerate,amsmath,amssymb,stmaryrd} 
\usepackage{pstricks,pst-node,pst-coil,pst-plot} 
\usepackage{hyperref}
\usepackage{euscript}
\usepackage{verbatim}
\usepackage{graphicx}

\numberwithin{equation}{section}

\theoremstyle{plain}

\newtheorem{thm}{Theorem}[section]

\newtheorem{lem}[thm]{Lemma}
\newtheorem{lemma}[thm]{Lemma}
\newtheorem{prop}[thm]{Proposition}
\newtheorem{proposition}[thm]{Proposition}

\theoremstyle{remark}

\newtheorem{claim}[thm]{Claim}
\newtheorem{rem}[thm]{Remark}     

\theoremstyle{definition}
\newtheorem{defn}[thm]{Definition}

\newcounter{mnotecount}[section]

\newcommand{\definedas}{\mathrel{\raise.095ex\hbox{\rm :}\mkern-5.2mu=}}

\def\epsilon{{\varepsilon}}
\def\phi{{\varphi}}

\let\<\langle 
\let\>\rangle

\newcommand{\Sigmatil}{\widetilde{\Sigma}}

\newcommand{\bR}{\mathbb{R}}

\newcommand{\bS}{\mathbb{S}}

\newcommand{\bD}{\mathbb{D}}

\newcommand{\cD}{\mathcal{D}}

\newcommand{\cB}{\mathcal{B}}
\newcommand{\cQ}{\mathcal{Q}}

\newcommand{\bfB}{\mathbf{B}}
\newcommand{\bfK}{\mathbf{K}}

\newcommand{\gbar}{\overline{g}}
\newcommand{\ghat}{\widehat{g}}

\renewcommand{\hbar}{\overline{h}}

\newcommand{\Wtil}{\widetilde{W}}

\newcommand{\kulk}{\owedge} 


\DeclareMathOperator{\tr}{tr}
\DeclareMathOperator{\divg}{div}

\DeclareMathOperator{\supp}{supp}
\DeclareMathOperator{\vol}{Vol}


\newcommand{\diff}  [2]{\frac{d #1}{d #2}}



\newcommand{\riem}{\mathrm{Rm}}
\newcommand{\hriem}{\widehat{\riem}}
\newcommand{\riemdddd}[4]{\riem_{#1 #2 #3 #4}}
\newcommand{\riemuddd}[4]{\riem^{#1}_{\phantom{#1} #2 #3 #4}}

\newcommand{\riemudud}[4]{\riem^{#1 \phantom{#2} #3}_{\phantom{#1} #2 \phantom{#3} #4}}
\newcommand{\riemdudu}[4]{\riem^{\phantom{#1} #2 \phantom{#3} #4}_{#1 \phantom{#2} #3}}
\newcommand{\riemuddu}[4]{\riem^{#1 \phantom{#2 #3} #4}_{\phantom{#1} #2 #3}}

\newcommand{\riemduud}[4]{\riem^{\phantom{#1} #2 #3}_{#1 \phantom{#2 #3} #4}}

\newcommand{\riembar}{\overline{\riem}}


\newcommand{\weyl}{\mathrm{W}}
\newcommand{\weylbar}{\overline{\weyl}}
\newcommand{\hweyl}{\widehat{\weyl}}
\newcommand{\weyldddd}[4]{\weyl_{#1 #2 #3 #4}}
\newcommand{\weyluddd}[4]{\weyl^{#1}_{\phantom{#1} #2 #3 #4}}

\newcommand{\weyludud}[4]{\weyl^{#1 \phantom{#2} #3}_{\phantom{#1} #2 \phantom{#3} #4}}




\newcommand{\ric}{\mathrm{Ric}}

\newcommand{\ricud}[2]{\ric^{#1}_{\phantom{#1} #2}}




\newcommand{\scal}{\mathrm{Scal}}


\newcommand{\tudud}[5]{\mathcal{#1}^{#2 \phantom{#3} #4}_{\phantom{#2} #3 \phantom{#4} #5}}


\newcommand{\grad}[1]{\nabla_{#1}}

\DeclareMathOperator{\hess}{Hess}
\newcommand{\hessdd}[2]{\nabla^2_{#1, #2}}

\newcommand{\sff}{S}

\newcommand{\sffud}[2]{\sff^{#1}_{\phantom{#1} #2}}




\newcommand{\kronecker}[2]{\delta^{#1}_{\phantom{#1}#2}}


\def\begeq{\begin{equation}}
\def\endeq{\end{equation}}

\def\tr{{\rm tr}}

\begin{document}

\title[Einstein manifolds with Weyl curvature in $L^p$]
 {On the asymptotic behavior of Einstein manifolds with an integral bound on the Weyl curvature}

\begin{abstract}
In this paper we consider the geometric behavior near infinity of some Einstein manifolds $(X^n, g)$
with Weyl curvature belonging to a certain $L^p$ space. Namely, we show that if $(X^n, g)$, $n \geq 7$,
admits an essential set and has its Weyl curvature in $L^p$ for some $1<p<\frac{n-1}{2}$, then $(X^n, g)$
must be asymptotically locally hyperbolic. One interesting application of this theorem is to show a
rigidity result for the hyperbolic space under an integral condition for the curvature.
\end{abstract}

\keywords{conformally compact manifold, asymptotically hyperbolic, rigidity}
\subjclass[2000]{Primary 53C25; Secondary 58J05}

\author[R. Gicquaud, D. Ji and Y. Shi]{Romain Gicquaud, Dandan Ji$^\dag$ and Yuguang Shi$^\dag$}

\address{Romain Gicquaud, Laboratoire de Math\'ematiques et de Physique Th\'eorique,
UFR Sciences et Technologie, Universit\'e Fran\c cois Rabelais, Parc de Grandmont,
37300 Tours, France} \email{romain.gicquaud@lmpt.univ-tours.fr}

\address{Dandan Ji, Key Laboratory of Pure and Applied mathematics, School of Mathematics Science, Peking University,
Beijing, 100871, P.R. China.} \email{jidandan@pku.edu.cn}

\address{Yuguang Shi, Key Laboratory of Pure and Applied mathematics, School of Mathematics Science, Peking University,
Beijing, 100871, P.R. China.} \email{ygshi@math.pku.edu.cn}

\thanks{$^\dag$ Research partially supported by   NSF grant of China 10990013.}

\date{\today}
\maketitle
\tableofcontents

\section{Introduction}
\label{secIntro}
During the last three decades there were lots of interesting works on the asymptotic behavior of
Ricci flat metrics with integral bounds on curvature. See e.g. \cite{BKN} and \cite{CT}. These works
gave an nice intrinsic characterization of asymptotic locally Euclidean (ALE) manifolds. Inspired
by these works, we want to study a similar problem in the context of asymptotic locally hyperbolic
(ALH) manifolds. The ALH case appears much more complicated than the ALE case in both geometric and
analytic parts. As an example, the rescaling argument which is very efficient in the analysis of
the asymptotic behavior of ALE metrics does not work in the ALH case because the model is the
hyperbolic metric which is not scale invariant. Another complication arises from the M\"obius group
of $\bS^n$ which allows cuspidal ends. To rule out such ends, we need to assume that the manifold
$(X^{n}, g)$ admits an essential set (see \cite{BM} for more details):

\begin{defn}\label{essentialset}
A non empty compact subset $\bD$ of a complete noncompact Riemannian manifold $(X^{n}, g)$ is called
an \emph{essential set} if

\begin{enumerate}
 \item $\bD$ is a compact domain of $X^n$ with smooth and strictly convex boundary
 $\bfB \definedas \partial \bD$, i.e. its second fundamental forms with respect to the outward unit
 normal vector field is positive definite,
 \item $\bD$ is totally convex: there is no geodesic $\gamma: [a; b] \to X$ such that
 $\gamma(a), \gamma(b) \in \bD$ and $\gamma(c) \not\in \bD$ for some $c \in [a; b]$,
 \item the sectional curvature of $(X^n, g)$ is negative outside $\bD$.
\end{enumerate}
\end{defn}

Assuming that $(X^n, g)$ is hyperbolic the existence of an essential set is equivalent to the
requirement that $(X^n, g)$ is convex and co-compact. More generally, it can be shown that
conformally compact and Cartan-Hadamard manifolds admit essential sets, see \cite{GicquaudThesis}.
Together with assumptions on the rate of convergence of the sectional curvature to $-1$ at infinity,
the existence of an essential set has been used to prove the existence and the regularity of a
conformal compactification of the manifold $(X^n, g)$ in \cite{Ba, BG, GicquaudCompactification, HQS}.

In what follows we define $\rho : X \to \bR$ as the distance function from $\bD$:
$$\rho \definedas d_g(\bD, \cdot).$$

In \cite{BM}, it has been proven that, if $\bD \subset X$ is an essential set, $\rho$ is smooth
function and has no critical point. This implies that the region $X^n$ which is outside the
essential set $\bD$ is diffeomorphic to $[0,\infty)\times \bfB$.

In this article we want to investigate the behavior at infinity of some Einstein manifolds with
Weyl curvature belonging to a certain $L^p$ space. In particular, we show that they are
asymptotically locally hyperbolic Einstein (ALHE) metric outside the essential set $\bD$, meaning
that $\sec_g + 1 = O(e^{-a\rho})$ for some $a > 0$. In contrast with the ALE case, the major
difficulty in the ALH setting is the lack of sharp global Sobolev inequalities which are crucial
in applying Moser iterations in the ALE case (see e.g. \cite{BKN}, \cite{CT}).

However, we observed a nice $L^2$-estimate for the Laplace operator acting on 4-tensors satisfying
properties analogous to those of the Weyl tensor for manifolds of dimension greater than $5$, see
Lemma \ref{q} below. Thanks to this lemma and combining other techniques, we were able to obtain
the following result (See also Theorems \ref{mainthm}):

\begin{thm}\label{mainresult1}
Let $(X^n,g)$, $n \geq 7$, be a complete noncompact Einstein manifold with
$$\ric = -(n-1) g.$$
Assume that $X^n$ contains an essential set $\bD$. We denote $W$ the Weyl tensor of the metric
$g$. If $\| W\|_{L^p(X^n,g)} <\infty$ for some $p \in \left(1; \frac{n-1}{2}\right)$, then
there exists a constant $C$ such that

\begin{equation}\label{curvaturebehavioratinfinity}
 \left| \riem - \bfK \right| \leq C e^{-(n+1)\rho}.
\end{equation}

Here $\riem$ is the curvature tensor of the metric $g$ and $\bfK$ the constant curvature tensor
with sectional curvature $-1$ with respect to metric $g$, i.e. 
$$\bfK_{ijkl}=-\left(g_{ik}g_{jl}-g_{ij}g_{kl}\right).$$
\end{thm}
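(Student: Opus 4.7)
\textbf{The plan is} to reduce the theorem to a pointwise decay estimate on the Weyl tensor $W$, derive a Bochner inequality for $|W|$ whose linear coefficient matches the indicial root of the hyperbolic model at $\alpha = n+1$, use the $L^p$ bound together with Lemma \ref{q} to first establish qualitative decay $|W|\to 0$ at infinity, and finally upgrade this to the sharp exponential rate $e^{-(n+1)\rho}$ by a barrier / maximum principle argument.

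First, since $\ric = -(n-1) g$ gives $\scal = -n(n-1)$, the orthogonal decomposition of $\riem$ collapses to $\riem = W + \bfK$, so $|\riem - \bfK| = |W|$ and it suffices to prove $|W| \leq C e^{-(n+1)\rho}$ on $X \setminus \bD$. Combining the second Bianchi identity with the Einstein condition, one derives a Weitzenböck equation of the form $\nabla^*\nabla W = c_n W + Q(W,W)$, where $c_n > 0$ and $Q(W,W)$ is pointwise quadratic in $W$. Pairing with $W$ and applying Kato's inequality yields, away from the zero locus of $W$,
\begin{equation*}
\Delta |W| \geq c_n |W| - C |W|^2 .
\end{equation*}
An indicial computation on the hyperbolic model shows that a radial $e^{-\alpha \rho}$ satisfies $\Delta u \sim (\alpha^2 - (n-1)\alpha) u$ for $\rho$ large, so the decay rate $\alpha = n+1$ corresponds exactly to the relation $\alpha(\alpha - (n-1)) = 2(n+1)$, i.e.\ $c_n = 2(n+1)$; this matches the constant produced by the Einstein Weitzenböck identity and is what ultimately forces the exponent $n+1$ in \eqref{curvaturebehavioratinfinity}.

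The crucial step is to first obtain the qualitative decay $|W|(x) \to 0$ as $\rho(x) \to \infty$. In the ALE analogue this is carried out by Moser iteration based on the sharp Euclidean Sobolev inequality, but that machinery is unavailable here since the hyperbolic model is not scale-invariant. I would replace it by a direct application of Lemma \ref{q}, the $L^2$ estimate for the Laplacian acting on $4$-tensors with Weyl-type symmetries. Combined with $\|W\|_{L^p(X^n,g)} < \infty$ and cut-off arguments on shells $\{R \leq \rho \leq R+1\}$, an iteration in $L^q$ norms absorbs the quadratic nonlinearity $C|W|^2$ into the linear term $c_n |W|$ on the end, eventually upgrading to $L^\infty$ decay. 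The dimensional restriction $n \geq 7$ and the range $1 < p < (n-1)/2$ enter exactly at this step, dictating the admissible iteration exponents and the validity of Lemma \ref{q}.

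Once $|W| \to 0$ is known, for any $\varepsilon > 0$ one can find $R_\varepsilon$ such that $C |W| \leq \varepsilon$ on $\{\rho \geq R_\varepsilon\}$, turning the Bochner inequality into the linear one $\Delta |W| \geq (2(n+1) - \varepsilon) |W|$. A radial supersolution $\phi = A e^{-(n+1-\delta)\rho}$ on $\{\rho \geq R_\varepsilon\}$, together with the maximum principle applied to $|W|/\phi$ (boundary data on $\{\rho = R_\varepsilon\}$ bounded by compactness, and $|W|/\phi \to 0$ at infinity by the previous step), yields $|W| \leq C e^{-(n+1)\rho}$ after letting $\varepsilon, \delta \to 0^+$. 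The main obstacle is the pointwise decay step: replacing Moser iteration by Lemma \ref{q} is the genuinely new analytical input of the paper and is what dictates both the dimension threshold and the admissible range of $p$.
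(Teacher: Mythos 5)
There is a genuine gap, and it sits exactly where you place the endgame: the exponent $n+1$ cannot be produced by an indicial/barrier argument on the Weyl equation, because your Bochner constant is wrong. For an Einstein metric with $\ric = -(n-1)g$ the Weyl tensor satisfies $\triangle \weyl + 2(n-1)\weyl + 2\cQ(\weyl) = 0$ (Equation \eqref{equationofweyltensor}); pairing with $\weyl$ and using Kato gives $\triangle|\weyl| \geq -2(n-1)|\weyl| - C|\weyl|^2$, not $\triangle|\weyl| \geq 2(n+1)|\weyl| - C|\weyl|^2$. The indicial equation of the linear operator $\triangle + 2(n-1)$ acting on $e^{-\alpha\rho}$ is $\alpha^2 - (n-1)\alpha + 2(n-1) = 0$, with roots $\tfrac{1}{2}\bigl(n-1 \pm \sqrt{(n-1)(n-9)}\bigr)$: these are never $n+1$ (indeed $(n+1)^2 - (n-1)(n+1) + 2(n-1) = 4n \neq 0$), they are complex for $n<9$, and even for large $n$ the larger root stays below $n-3$. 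So $e^{-(n+1)\rho}$ is not a supersolution of the relevant linear operator and no maximum principle of the kind you describe can reach the rate $n+1$. The paper does not run a pointwise barrier argument at all: it applies the refined Kato inequality $\left|\nabla|\weyl|\right| \leq \frac{n-1}{n+1}\left|\nabla\weyl\right|$ to the weighted tensor $\weyl_1 = e^{a\rho}\weyl$ (with $a$ constrained by Proposition \ref{lp}, whence $n\geq 7$) and then the $L^2$ spectral estimate of Lemma \ref{lmChengYau} to the truncation $\psi_+ = \max\{|\weyl_1|-Ce^{-b\rho},0\}$, and this only yields $|\weyl| = O\bigl(e^{-\left(\frac{(n-1)^2}{2(n-3)}+\delta\right)\rho}\bigr)$ --- barely better than rate $2$, far short of $n+1$.

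The exponent $n+1$ comes from a mechanism entirely absent from your plan: boundary regularity. The preliminary rate $>2$ suffices to invoke Proposition \ref{propHQS} and conclude that $(X,g)$ is $C^{2,\mu}$-conformally compact; the hypothesis $\weyl \in L^p$ with $p \leq \frac{n-1}{2}$ then forces the Weyl tensor of the compactified metric to vanish on $\partial X$, so the conformal infinity is locally conformally flat (Theorem \ref{curvaturebehavior}); finally the boundary regularity result of [CDLS] and the Fefferman--Graham expansion (Lemma \ref{lmWang}) give $|\weyl| = O(r^{n+1}) = O(e^{-(n+1)\rho})$. The rate $n+1$ thus reflects the order at which the first undetermined term $g_{(n-1)}$ enters the expansion, not a bulk indicial root. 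Two smaller inaccuracies: the qualitative decay $|\weyl| \to 0$ is obtained from the $L^p$ bound by elliptic estimates in harmonic coordinates alone (Lemma \ref{lmDecayWeyl}), valid for every $p\in(1;\infty)$ with no iteration; and the restriction $p < \frac{n-1}{2}$ enters in the weighted $L^2$ estimate of Proposition \ref{lp} (where one needs $\frac{n-1}{2}-\frac{n-1}{p} < \frac{n-5}{2}$) and in the vanishing of the boundary Weyl tensor, not in an $L^q$ iteration scheme.
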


Since $(X^n, g)$ is Einstein and has a lower bound on its injectivity radius, it will become apparent
that $\weyl \in L^\infty$. As a consequence if $\weyl \in L^p$ for some $p \in (1, \infty)$,
$\weyl \in L^q$ for all $q \geq p$: the smaller $p$ is, the more stringent the assumption.

This result turns out to be very useful to prove rigidity theorems. In particular, assuming further
that the manifold $X$ is simply connected at infinity forces $(X, g)$ to be isometric to the
hyperbolic space (see Theorem \ref{rigidity}). We also give a variant of this theorem for static
spacetimes together with a rigidity result in Section \ref{secApplications}.

We are interested in this article only in complete noncompact manifolds whose curvature will be
shown to tend to $-1$ at infinity. Hence, we will always use the shorthand ``\emph{Einstein manifold}''
to denote manifolds $(X^n, g)$ satisfying \[\ric_g = -(n-1)g.\]

Einstein metrics constructed in \cite{GL}, \cite{Le} and \cite{Biquard} satisfy $|\weyl| \leq C e^{-2\rho}$.
In particular $\weyl \in L^p$ for any $p > \frac{n-1}{2}$. The case $p = \frac{n-1}{2}$ is more
delicate and we plan to address it in a future work. Nevertheless Theorem \ref{curvaturebehavior}
shows that Theorem \ref{mainresult1} remains true for $n \geq 5$ and $p \leq \frac{n-1}{2}$ in the
important case of conformally compact metrics.

In the ALE case, the curvature behavior at infinity which is the analog of
\eqref{curvaturebehavioratinfinity} is obtained by a Moser iteration where a global Sobolev inequality
is involved. However, as mentioned above, in the ALH case such a kind of global Sobolev inequality
is not true (see \cite{GicquaudStability} for an illustration of this fact). Hence, we use a variant
of the maximum principle to get Estimate \eqref{curvaturebehavioratinfinity}. This is where the
assumption $n \geq 7$ appears. It is also interesting to compare Theorem \ref{mainresult1} with
\cite[Theorem 0.13]{CT} and \cite[Theorem 1.5]{BKN}.

Let us describe the main arguments that lead to Theorem \ref{mainresult1}. First, we note that if
$(X^n,g)$ is Einstein, its Weyl tensor satisfies the following well known equation:

\begin{equation}\label{equationofweyltensor}
\triangle W + 2(n-1) W + 2\cQ(W) = 0,
\end{equation}
where $\triangle$ is the Laplace operator acting on tensors and $\cQ$ is a quadratic expression
in the Weyl curvature tensor. See e.g. \cite{BG} for a derivation of this formula. Setting
\begin{equation}\label{eqDefB}
\cB_{\alpha\beta\gamma\delta} \definedas \weyludud{\mu}{\alpha}{\nu}{\beta} \weyldddd{\mu}{\gamma}{\nu}{\delta},
\end{equation}
$\cQ$ can be written as follows:

\begin{equation}\label{eqDefQ}
\cQ_{\alpha\beta\gamma\delta} \definedas \cB_{\alpha\beta\gamma\delta} + \cB_{\alpha\gamma\beta\delta}-
\cB_{\beta\alpha\gamma\delta} - \cB_{\beta\delta\alpha\gamma}.
\end{equation}

Note that we are using the Einstein summation convention. Due to the $L^p$-bound of $W$, we see that
$W$ is small near infinity. Hence, intuitively Equation \eqref{equationofweyltensor} is almost
equivalent to the following linear equation:

\begin{equation}\label{equationofweyltensor2}
\triangle W + 2(n-1) W  = 0.
\end{equation}

By some careful analysis, we are able to show an $L^2$ spectral estimate of the Laplace operator acting
on Weyl-type tensors (see Lemma \ref{q}). Together with a refined Kato inequality and other some other
techniques we achieve the proof of Estimate \eqref{curvaturebehavioratinfinity}.

Some applications of Theorem \ref{mainresult1} are considered in this paper. Namely, by Theorem
\ref{mainresult1} we are able to show a rigidity theorem for ALHE manifolds with Weyl tensor belonging
to $L^p$. We also get the curvature behavior of vacuum static spacetimes with a negative cosmological
constant. See Theorems \ref{rigidity} and \ref{static} for more details.

The rest of the paper goes as follows. In \S\ref{secEstimates} we get some basic $L^2$-estimates
for the Weyl tensor. Then we show how these estimates can be converted to pointwise estimates in
\S\ref{secEstimates2}. Finally in \S\ref{secApplications}, we discuss some applications of Theorem
\ref{mainresult1}.\\

{\bf Acknowledgements} The authors are grateful to Professor Jie Qing, Dr. Jie Wu and Dr. Xue Hu for
their interest in this work and for many enlightening discussions.

\section{Basic Estimates}
\label{secEstimates}
The main purpose of this section is to prove Lemma \ref{spetrum} which gives a spectral estimate
of some $(0,4)$-tensors on asymptotically hyperbolic Einstein (AHE) manifolds with an essential
set. It will play an essential role in the proof of the main theorem.

In particular, Lemma \ref{spetrum} will be used to show that if $\|\weyl\|_{L^p(X^n,g)} < \infty$ for
some $p \in \left(1, \frac{n-1}{2}\right)$, then $\|\weyl\|_{L^2(X^n,g)} <\infty$. Moreover if
$n \geq 6$, we have $\|e^{\frac{a}{2}\rho}\weyl\|_{L^2(X^n,g)} <\infty$ for some positive $a$.
See Proposition \ref{lp} for more details.

We choose once and for all a complete noncompact Einstein manifold $(X^n, g)$ containing an
essential subset $\bD$. We first introduce some estimates for a Riccati equation that will be
useful for the analysis of the normal curvature equation. Similar results have been obtained in
\cite[Lemma 2.3]{ST}. See also \cite{Ba,BG,GicquaudCompactification,HQS}.

In all this section, we use Greek letters to denote indices going from $0$ to $n-1$ and Latin
letters for indices from $1$ to $n-1$. Unless otherwise stated, we use the Einstein summation
convention. For any $R \geq 0$, we denote \[\bD_R \definedas \{x \in X, d_g(x, \cD) \leq R\},\]
and \[\Sigma_R = \rho^{-1}(R)\] a slice of constant $\rho$.

\begin{lem}\label{estfrriccatiequa} Let $\epsilon$ be a positive constant.
Assume that $f(\rho)$ is a smooth positive function of $\rho > 0$ such that $|f(\rho)-1|\leq \epsilon$.
Assume further that $y$ is a solution of
\[y' + y^2 = f\]
satisfying
\[y(0) > 0.\]
Then $y$ satisfies
$$|y-1|\leq e^{-\rho/2}\left|y(0)-1\right| + \epsilon.$$
\end{lem}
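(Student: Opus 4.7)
The plan is a first-contact (barrier) argument with the function $v(\rho) \definedas e^{-\rho/2}|y(0)-1| + \epsilon$, which by direct differentiation satisfies $v' = -\tfrac{1}{2}(v - \epsilon)$. I would first observe that $y$ remains strictly positive on $[0,\infty)$: if $y$ reached $0$ from above at some $\rho_1$, the Riccati relation $y' = f - y^2$ would force $y'(\rho_1) = f(\rho_1) > 0$, contradicting $y'(\rho_1) \le 0$. This positivity is essential for the lower side of the barrier comparison that follows.

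Next, I would suppose for contradiction that $|y-1| > v$ somewhere. Since $|y(0)-1| < v(0) = |y(0)-1| + \epsilon$, there is a smallest $\rho_* > 0$ at which $|y(\rho_*) - 1| = v(\rho_*)$; as $v(\rho_*) \ge \epsilon > 0$ we have $y(\rho_*) \ne 1$, so $|y-1|$ is smooth at $\rho_*$. In the upper case $y(\rho_*) = 1 + v(\rho_*)$ with $y'(\rho_*) \ge v'(\rho_*)$, substituting $y' = f - (1+v)^2$, using $f \le 1 + \epsilon$, and the expression for $v'$, one obtains after simplification $\epsilon \ge 3v(\rho_*) + 2v(\rho_*)^2$, which is impossible since $v(\rho_*) \ge \epsilon$. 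In the lower case $y(\rho_*) = 1 - v(\rho_*)$ with $y'(\rho_*) \le -v'(\rho_*)$, the positivity of $y$ forces $v(\rho_*) < 1$, and the analogous computation using $f \ge 1 - \epsilon$ reduces the inequality to $v(\rho_*)\bigl(3 - 2v(\rho_*)\bigr) \le \epsilon$. But $v(\rho_*) < 1$ gives $3 - 2v(\rho_*) > 1$, so $v(\rho_*)(3 - 2v(\rho_*)) > v(\rho_*) \ge \epsilon$, again a contradiction.

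The main obstacle is the lower case: the upper case is essentially automatic from the bound $v \ge \epsilon$, whereas the lower case genuinely depends on the strict inequality $v(\rho_*) < 1$, and hence on the preliminary positivity step. This is the only place where the hypotheses $y(0) > 0$ and $f > 0$ play a substantive role. The slow rate $e^{-\rho/2}$ (rather than the faster $e^{-2\rho}$ suggested by the linearization $h' + 2h = f - 1$ with $h = y - 1$) is chosen so that the quadratic nonlinearity $y^2$ is swallowed harmlessly in both barrier comparisons.
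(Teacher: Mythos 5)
Your proof is correct, but it follows a genuinely different route from the paper's. Both arguments open with the same positivity claim for $y$ (a first-zero contradiction using $y'+y^2=f>0$), and both need it in exactly one later place. After that they diverge: the paper sets $z=y-1$, derives the identity $(z^2)'+2(y+1)z^2=2z(f-1)$, uses $y>0$ and Young's inequality to get $(z^2)'+z^2\leq \epsilon^2$, and integrates this with the factor $e^{\rho}$ to obtain $z^2\leq z^2(0)e^{-\rho}+\epsilon^2$, from which the stated bound follows by subadditivity of the square root. You instead run a first-contact comparison against the explicit barrier $v=e^{-\rho/2}|y(0)-1|+\epsilon$, splitting into the cases $y(\rho_*)=1\pm v(\rho_*)$; your algebra in both cases checks out ($\epsilon\geq 3v+2v^2$ versus $v\geq\epsilon$ in the upper case, $v(3-2v)\leq\epsilon$ versus $v<1$ and $v\geq\epsilon$ in the lower case), and you correctly identify that the positivity of $y$ is what rules out $v(\rho_*)\geq 1$ in the lower case, playing the role that the sign of $2(y+1)$ plays in the paper's energy identity. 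The paper's integration is arguably slicker and explains where the (non-optimal) exponent $1/2$ comes from as an artifact of the Young inequality step, whereas your barrier argument is more in the spirit of the maximum-principle comparisons used later in the paper (e.g.\ in the proof of Theorem \ref{mainthm}) and makes the mechanism --- the quadratic term being absorbed because the barrier decays slowly enough --- more transparent; your closing remark about the linearized rate $e^{-2\rho}$ is also accurate. One minor point common to both proofs: global existence of $y$ on $[0,\infty)$ is tacitly assumed, but it follows at once from the positivity claim together with $y'<0$ wherever $y>\sqrt{1+\epsilon}$.
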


\begin{proof}
We claim that $y > 0$. Indeed, if there exists $\rho$ such that $y(\rho)\leq 0$, we can find some
$\rho_0$ satisfying $y(\rho_0)=0$ and $y(\tau)>0$ for any $\tau \in (0, \rho_0)$. In particular
this implies that $y'(\rho_0)\leq0$. But $$0\geq y'(\rho_0)+y^2(\rho_0)=f(\rho_0)>0,$$
which is a contradiction. Next, we set $z = y-1$. The equation satisfied by $y$ implies the following
one for $z$:
\[(z^2)' + 2(y+1) z^2 = 2z(f-1).\]
In particular, since we noticed that $y > 0$, we have
\[(z^2)' + 2 z^2 < 2z(f-1) \leq z^2 + (f-1)^2.\]
This inequality can be integrated to yield
\[|z| \leq \sqrt{z^2(0) e^{-\rho} + \epsilon^2} \leq |z|(0) e^{-\rho/2} + \epsilon.\]
\end{proof}

We want to find nice coordinate charts to apply Schauder estimates. We choose to use harmonic
coordinates. We refer the reader to \cite{HH} and references therein for more informations. Let
$Q > 1$ and $\alpha \in (0; 1)$ be arbitrary. Since the injectivity radius $r_I$ of $(X^n, g)$
is strictly positive (see Lemma \ref{lmInjRadius} below), there exists a constant $r_H > 0$ such
that, given any point $x_0\in X^n$, there exist harmonic coordinates $y^1,...,y^n$ on the ball
$B_{r_H}(x_0)$ in which the metric $g$ satisfies

\begin{equation*}
\left\{
\begin{aligned}
 Q^{-1}\delta & \leq g\leq Q\delta,\\
 \left\|g-\delta\right\|_{C^{1,\alpha}} & \leq Q-1,
\end{aligned} \right.
\end{equation*}
where $\delta = dy^1\otimes dy^1+...+dy^n\otimes dy^n$ is the flat metric.

\begin{lem}\label{lmInjRadius}
 Assume that $(X^n, g)$ satisfies $\|\weyl\|_{L^p(X^n,g)} <\infty$ for some $p \in (1; \infty)$.
 Then the injectivity radius $r_I(x)$ is bounded from below by some positive constant on $(X^n, g)$.
\end{lem}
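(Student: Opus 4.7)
The plan is to reduce the problem to two standard ingredients: a uniform pointwise bound on $|\riem|$ and a uniform positive lower bound on the volume of metric balls of some fixed small radius. Together with any of the classical injectivity radius estimates of Cheeger--Gromov--Taylor type, these two ingredients will produce a uniform lower bound on $\inj(x)$.

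First I would establish a uniform upper bound on $|\riem|$ on $X^n$. On the compact essential set $\bD$ this is automatic. On $X^n\setminus\bD$, given any 2-plane $\pi$ at a point, spanned by orthonormal vectors $e_1,e_2$, I would extend to an orthonormal frame $\{e_1,\ldots,e_n\}$ and use the Einstein condition $\ric=-(n-1)g$ to write
\[
\sec(\pi)=\sec(e_1,e_2)=-(n-1)-\sum_{j=3}^n\sec(e_1,e_j).
\]
By the essential-set hypothesis every $\sec(e_1,e_j)$ is strictly negative on $X^n\setminus\bD$, so the sum on the right is negative, whence $\sec(\pi)\in(-(n-1),0)$. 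Since $\pi$ is arbitrary, this gives $|\sec|\leq n-1$ outside $\bD$, and hence $|\riem|\leq K$ on $X^n$ for some constant $K$ depending only on $n$ and $\sup_\bD|\riem|$.

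Next I would establish uniform local volume non-collapsing. Outside the essential set, $\rho$ is smooth without critical points, so the normal flow of $\nabla\rho$ furnishes a diffeomorphism $(0,\infty)\times\bfB\to X^n\setminus\bD$ in which $g=d\rho^2+g_\rho$. The shape operator $S_\rho$ of $\Sigma_\rho$ satisfies a matrix Riccati equation $S'_\rho+S_\rho^2+R_\rho=0$ along the normal geodesics, whose source $R_\rho$ is already bounded by the previous step. Running an eigenvalue-wise version of the argument of Lemma \ref{estfrriccatiequa}, using the strict convexity of $\bfB$ as positive initial data, one obtains uniform two-sided bounds on the eigenvalues of $S_\rho$. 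This prevents the slices $\Sigma_\rho$ from collapsing on small scales and yields constants $r_0,v_0>0$, independent of the base point, with $\vol_g(B(x,r_0))\geq v_0$ for every $x\in X^n\setminus\bD$. The remaining region is contained in a fixed compact neighborhood of $\bD$, where the same bound follows by compactness.

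Combining $|\riem|\leq K$ with the uniform lower bound $\vol(B(x,r_0))\geq v_0$, a standard Cheeger--Gromov--Taylor injectivity radius estimate yields $\inj(x)\geq r_I(n,K,r_0,v_0)>0$ uniformly on $X^n$, which is exactly the conclusion of the lemma. The main obstacle is the volume non-collapsing step: passing from the scalar Riccati argument of Lemma \ref{estfrriccatiequa} to the full matrix Riccati equation for $S_\rho$ requires careful coupling between the eigenvalues and uniformity as $\rho\to\infty$, and must use both the curvature bound from the first step and the positive initial data furnished by the strict convexity of $\bfB$. It is worth noting that the $L^p$-hypothesis on $\weyl$ does not seem to be needed in this argument; it is presumably included only for consistency with the standing assumptions of the paper.
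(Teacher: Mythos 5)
Your first step is fine: the identity $\ric(e_1,e_1)=\sum_{j\geq 2}\sec(e_1,e_j)=-(n-1)$ together with the strict negativity of all sectional curvatures outside $\bD$ does give $\sec\in(-(n-1),0)$ there, hence a uniform bound on $|\riem|$. (You are also right that the $L^p$ hypothesis on $\weyl$ plays no role; the paper's own proof does not use it either.) The genuine gap is in the volume non-collapsing step. You assert that two-sided eigenvalue bounds on $S_\rho$ ``prevent the slices $\Sigma_\rho$ from collapsing on small scales'' and hence give $\vol_g(B(x,r_0))\geq v_0$, but this implication is false as stated: a hyperbolic cusp has $|\riem|\equiv 1$ and second fundamental form of its horospherical slices identically equal to $\pm g$, yet the slices (flat tori) collapse and the injectivity radius tends to zero. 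Bounded curvature plus bounded $|S_\rho|$ is simply compatible with collapse. What actually has to be extracted is that the \emph{positivity} of $S_\rho$ (propagated from the strict convexity of $\bfB$ via the Riccati equation) makes $\rho\mapsto g_\rho$ monotone non-decreasing, so the slices dominate the fixed metric $g_0$ on $\bfB$; this bounds their systole from below, and one must then invoke a Klingenberg-type estimate (short contractible closed geodesics are excluded by the upper intrinsic curvature bound coming from the Gauss equation) to control the intrinsic injectivity radius of the slices, and only then pass to intrinsic ball volumes (Croke), to ambient ball volumes via a tube/co-area argument, and finally to Cheeger--Gromov--Taylor. Note also that a uniform \emph{positive lower} bound on the eigenvalues of $S_\rho$, which your phrasing suggests, is not available at this stage of the paper: it would require the sectional curvature to stay uniformly away from $0$ at infinity, which is not known yet. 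As written, the crux of the lemma is hidden inside an unproven assertion.

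For comparison, the paper's proof is entirely different and much shorter: it argues by contradiction with a point $x$ of minimal $\rho$ among points of small injectivity radius, produces a short geodesic loop based at $x$ (possible since negative curvature outside $\bD$ excludes conjugate points at that scale), and uses the convexity of $\rho$ along geodesics --- $(\rho\circ\gamma)''=\sff(\dot\gamma,\dot\gamma)\geq 0$, coming from the essential-set structure --- to show the midpoint of the loop lies strictly closer to $\bD$ and has injectivity radius at most half the loop length, contradicting minimality. No curvature bound, volume comparison, or Cheeger--Gromov--Taylor input is needed. If you want to keep your route, you must supply the systole/Klingenberg argument for the slices sketched above; otherwise the geodesic-loop argument is the efficient one.
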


\begin{proof}
The injectivity radius of $r_I(x)$ a point $x \in X$ is a positive continuous map from $X$ to
$\bR_+^* \cup \{\infty\}$. Hence it is bounded from below on $\bD$ by some $r_0$. Assume that there
exists a point $x \in X\setminus \bD$ whose injectivity radius is less than $\frac{r_0}{2}$. We can
assume that any point $y$ with $\rho(y) < \rho(x)$ has injectivity radius strictly greater than
$\frac{r_0}{2}$. Then there exists a geodesic $\gamma: [0;1] \to X$ of length $r_0$ such that
$\gamma(0) = \gamma(1) = x$.

The function $\rho \circ \gamma$ is convex and cannot be constant. Indeed, if $\rho(\gamma(t)) > 0$,
$(\rho\circ\gamma)''(t) = \sff(\dot{\gamma}, \dot{\gamma}) \geq 0$, with equality iff $\dot{\gamma}$
is colinear to $\nabla \rho$. So $\rho(\gamma(1/2)) < \rho(x)$. Consider now the geodesics
$\gamma_1$ and $\gamma_2$ defined on the interval $[0; 1]$ by

$$
\begin{aligned}
 \gamma_1(t) & = \gamma\left(\frac{1+t}{2}\right),\\
 \gamma_2(t) & = \gamma\left(\frac{1-t}{2}\right).
\end{aligned}
$$

They are two geodesics starting at $\gamma(1/2)$ and ending at $x$, both of length $\frac{r_0}{2}$.
This means that $\gamma(1/2)$ has injectivity radius less than $\frac{r_0}{2}$ and contradicts the
definition of $x$.
\end{proof}

\begin{lem}\label{lmDecayWeyl}
If we further assume that $\|\weyl\|_{L^p(X^n,g)} <\infty$ for some $p \in (1; \infty)$, then the
Weyl tensor $\weyl$ of $(X^n, g)$ tends uniformly to zero at infinity.
\end{lem}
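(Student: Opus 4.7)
The plan is to combine the bounded geometry coming from Lemma \ref{lmInjRadius} with the elliptic equation \eqref{equationofweyltensor} satisfied by $\weyl$ in order to get an $\epsilon$-regularity statement of the form: there exist constants $\epsilon_0, C > 0$ such that
\[
 \|\weyl\|_{L^p(B_{r_H}(x))} \leq \epsilon_0
 \quad\Longrightarrow\quad
 |\weyl(x)| \leq C \|\weyl\|_{L^p(B_{r_H}(x))}.
\]
Once this is established, the conclusion is immediate: the assumption $\|\weyl\|_{L^p(X)} < \infty$ guarantees that $\int_{B_{r_H}(x)} |\weyl|^p \to 0$ as $\rho(x) \to \infty$, so the smallness hypothesis is satisfied outside a large enough compact set and $|\weyl(x)|$ tends uniformly to zero at infinity.

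First I would fix, around any $x_0 \in X$, the harmonic chart of radius $r_H$ produced above. Since $(X^n,g)$ is Einstein with constant Ricci curvature, the metric is $C^{1,\alpha}$-close to the flat metric in this chart, with constants independent of $x_0$. Hence the Laplacian acting on $(0,4)$-tensors is, in these coordinates, a uniformly elliptic system with $C^{1,\alpha}$ coefficients controlled by universal constants. The equation \eqref{equationofweyltensor} may then be read on each such ball as a single semilinear elliptic system whose nonlinearity is quadratic in $\weyl$, with universal structure constants.

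The main step is the $\epsilon$-regularity. Applying the Bochner identity to $|\weyl|^2$ and using \eqref{equationofweyltensor} together with the algebraic bound $|\cQ(\weyl)| \leq C|\weyl|^2$, one obtains the differential inequality
\[
 \triangle |\weyl|^2 \geq 2 |\nabla \weyl|^2 - 2(n-1)|\weyl|^2 - C|\weyl|^3
\]
(which in particular implies that $u \definedas |\weyl|$ is a weak subsolution of $\triangle u + (C + C|\weyl|) u \geq 0$ via a Kato-type inequality). I would then run a Moser iteration for subsolutions on this differential inequality in the harmonic ball $B_{r_H}(x_0)$. The cubic term is the obstacle: the standard Moser scheme requires the effective potential $V = C(1 + |\weyl|)$ to be small in an $L^{n/2+\delta}$-like norm, which is not a priori available. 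The trick is to absorb the cubic term using Hölder together with the Sobolev embedding valid on the Euclidean-like ball, and then to impose the smallness of $\|\weyl\|_{L^p(B_{r_H}(x_0))}$ (or rather of its Sobolev-rescaled form) to close the iteration. This is the reason one needs $p > 1$: any such $p$ allows, after finitely many bootstrap steps exploiting the smallness of the local $L^p$ norm of $\weyl$, to reach an integrability threshold large enough for the standard Moser scheme to apply, and ultimately to reach $L^\infty(B_{r_H/2}(x_0))$.

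Putting these ingredients together yields the $\epsilon$-regularity statement above, uniformly in $x_0 \in X$. As already explained, combined with the absolute continuity of $\int_X |\weyl|^p$, this produces the desired uniform decay $\weyl \to 0$ at infinity. The hard part is really the $\epsilon$-regularity step; once it is in place, everything else is a straightforward consequence of finiteness of the global $L^p$ norm.
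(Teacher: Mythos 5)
Your overall architecture (uniform constants on harmonic balls, local smallness of $\|\weyl\|_{L^p}$ far from $\bD$, hence uniform decay) matches the paper's, but the central step --- the $\epsilon$-regularity via Moser iteration --- has a genuine gap exactly where you flag the difficulty. To absorb the cubic term $\int |\weyl|^{\beta+2}\phi^2$ in the iteration you must pair the Sobolev exponent $\frac{n}{n-2}$ with its conjugate $\frac{n}{2}$, so what needs to be small is $\|\weyl\|_{L^{n/2}(B_{r_H}(x_0))}$, not $\|\weyl\|_{L^p}$ for an arbitrary $p>1$. Since the hypothesis allows $p$ well below $\frac{n}{2}$ (e.g.\ $p$ close to $1$ while $n\geq 7$), and H\"older goes the wrong way, smallness in $L^p$ gives no control on the $L^{n/2}$ norm unless one already has a pointwise bound on $\weyl$. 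The ``finitely many bootstrap steps'' you invoke cannot start: every step of the iteration requires the potential $C(1+|\weyl|)$ to be controlled in $L^{n/2}$, and that is precisely what is missing at the first step. As written, the $\epsilon$-regularity claim is unproved, and I do not see how to obtain it from subcritical $L^p$ smallness alone.

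The missing ingredient, which is what the paper uses, is that a uniform $L^\infty$ bound on $\weyl$ is available \emph{without any smallness}. In harmonic coordinates the Einstein equation reads $-\frac{1}{2}g^{kl}\partial_k\partial_l g_{ij}+Q(g,\partial g)=-(n-1)g_{ij}$, and interior elliptic regularity upgrades the $C^{1,\alpha}$ control of $g$ (which, note, does not by itself bound the curvature) to a uniform $C^{2,\alpha}$ bound, hence $\|\weyl\|_{L^\infty(B_{2r_H/3}(x_0))}\leq C_2$ with $C_2$ independent of $x_0$. Once this is in hand the quadratic term is harmless: interpolating between the small $L^p$ norm and the bounded $L^\infty$ norm gives $\|\weyl\|_{L^q(B_{2r_H/3}(x_0))}\leq C_2^{1-\beta}\mu^{\beta}$ for a fixed $q>\max\left(p,\frac{n}{2}\right)$, and then a single interior $W^{2,q}$ estimate applied to $\triangle\weyl+2(n-1)\weyl+2\cQ(\weyl)=0$ (the quadratic term being estimated through the $L^\infty$ bound) followed by the embedding $W^{2,q}\hookrightarrow L^\infty$ yields the uniform decay. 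If you prefer to keep your Moser iteration, the same $L^\infty$ bound makes the potential bounded and the iteration then closes for any $p>0$; either way, the a priori sup bound coming from the Einstein equation in harmonic coordinates is the step your argument cannot do without.
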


\begin{proof}
In harmonic coordinates, the metric $g$ satisfies an equation of the form
$$\ric_{ij}=-\frac{1}{2}g^{kl}\partial_k\partial_l g_{ij}+Q(g,\partial g)$$
where $Q$ is an expression which is quadratic in $\partial g$, see e.g. \cite{Petersen}. Since $g$
is Einstein, $\ric_{ij} = -(n-1)g_{ij}$, we get by standard elliptic regularity that there exists
a constant $C_1$ such that

$$\left\|g\right\|_{C^{2,\alpha}\left(B_{\frac{2}{3}r_H}(x_0)\right)}\leq C_1.$$

Thus we get a bound

$$\left\|\weyl\right\|_{L^\infty\left(B_{\frac{2}{3}r_H}(x_0)\right)}\leq C_2.$$

From $\weyl\in L^p(X^n,g)$, we get that for any small $\mu > 0$ there exists $R > 0$ such that

$$\left(\int_{X^n\setminus \bD_{R-r_H}}|\weyl|^p dV_g\right)\leq\mu.$$

As a consequence, for any $x_0\in M\setminus \bD_R$, we have that

$$\left\|\weyl\right\|_{L^p\left(B_{\frac{2}{3}r_H}(x_0)\right)} \leq \left\| \weyl\right\|_{L^p\left(X^n\setminus \bD_{R-r_H}\right)}\leq\mu.$$

Select $q\in(\frac{n}{2},\infty)$, $q > p$ arbitrarly. From Young's inequality, there exists
$\beta \in (0, 1)$ such that

$$
\left\|\weyl\right\|_{L^q\left(B_{\frac{2}{3}r_H}(x_0)\right)}
  \leq \left\|\weyl\right\|^\beta_{L^p\left(B_{\frac{2}{3}r_H}(x_0)\right)} \left\|\weyl\right\|^{1-\beta}_{L^\infty\left(B_{\frac{2}{3}r_H}(x_0)\right)}
  \leq C^{1-\beta}_{2}\mu^\beta.
$$

From \cite{BG}, the Weyl tensor satisfies an equation of the form

$$\triangle \weyl+ 2(n-1)\weyl + 2\cQ(\weyl) = 0,$$

\noindent where $\cQ$ was defined in Equation \eqref{eqDefQ}. Therefore from the interior Schauder
estimates, we get

\begin{eqnarray*}
\left\|\weyl\right\|_{W^{2,q}\left(B_{r_{\frac{H}{2}}}(x_0)\right)}
  & \leq & C \left[\left\|\weyl\right\|_{L^q\left(B_{\frac{2}{3}r_H}(x_0)\right)} + \left\|\cQ(\weyl)\right\|_{L^q\left(B_{\frac{2}{3}r_H}(x_0)\right)}\right]\\
  & \leq &C_{3}\mu^\beta,
\end{eqnarray*}
where we used the fact that $\left\|\weyl\right\|_{L^\infty\left(B_{\frac{2}{3}r_H}(x_0)\right)}\leq C_2$ to
estimate the quadratic term.
\end{proof}

For simplicity we may use Fermi coordinates $(x^1, \ldots, x^{n-1})$ on the slices $\Sigma_\rho$.
We denote $\sff_{ij}$ the components of the second fundamental form of $\Sigma_\rho$ in this
coordinate system. It is well known that the following equation holds:

\begin{equation}\label{eqRiccati}
\frac{\partial}{\partial \rho} \sffud{j}{i} + \sffud{j}{k} \sffud{k}{i} = -\riemuddd{j}{0}{i}{0},
\end{equation}

where the index $0$ refers to the unit normal direction of $\Sigma_\rho$, that is to say $\nabla \rho$.
We define the mean curvature of $\Sigma_\rho$ by $H=g^{ij}S_{ij} = \sffud{i}{i}$. Since $g$ is Einstein
with scalar curvature $-n(n-1)$, the Riemann tensor can be written as follows:

\begin{equation}\label{eqRiemEinstein}
\riemdddd{\alpha}{\beta}{\gamma}{\delta}
  = - \left(g_{\alpha\gamma} g_{\beta\delta} - g_{\alpha\delta} g_{\beta\gamma}\right)
  + \weyldddd{\alpha}{\beta}{\gamma}{\delta}.
\end{equation}

Combining Equations \eqref{eqRiccati} and \eqref{eqRiemEinstein} with Lemma \ref{lmDecayWeyl}, we get
the following lemma:

\begin{lemma}\label{lmMeanCurvEstimate}
 Let $H$ be the mean curvature of the hypersurfaces of constant $\rho$.
 If $\|\weyl\|_{L^p(X^n,g)} <\infty$ for some $p \in (1; \infty)$, then $H = (n-1) + o(1)$.
\end{lemma}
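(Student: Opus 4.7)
The plan is to derive a matrix Riccati equation for the second fundamental form $\sff$ of $\Sigma_\rho$ and then compare its eigenvalues with solutions of the scalar Riccati equation treated in Lemma \ref{estfrriccatiequa}. Substituting the Einstein decomposition \eqref{eqRiemEinstein} into the Riccati equation \eqref{eqRiccati}, and using $g_{00}=1$, $g_{0i}=0$ in Fermi coordinates, yields
\[
\partial_\rho \sffud{j}{i} + \sffud{j}{k}\sffud{k}{i} = \kronecker{j}{i} - \weyluddd{j}{0}{i}{0}.
\]
By Lemma \ref{lmDecayWeyl}, for every $\epsilon > 0$ there exists $R_0 > 0$ such that the operator norm of the symmetric tensor $W^j_{0i0}$ is at most $\epsilon$ on $X \setminus \bD_{R_0}$.

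Along each normal geodesic I would then analyze the largest and smallest eigenvalues $\lambda_{\max}(\rho)$, $\lambda_{\min}(\rho)$ of $\sff$. These are Lipschitz in $\rho$, and testing the above matrix Riccati equation against a unit eigenvector realizing $\lambda_{\max}$ or $\lambda_{\min}$ gives, in the sense of Dini derivatives,
\[
\lambda'_{\max} + \lambda_{\max}^2 \leq 1 + \epsilon, \qquad \lambda'_{\min} + \lambda_{\min}^2 \geq 1 - \epsilon,
\]
valid for $\rho \geq R_0$. The essential set condition $\sff > 0$ on $\bfB$ ensures $\lambda_{\min}(0) > 0$, while compactness of $\bfB$ gives a uniform upper bound on $\lambda_{\max}(0)$.

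Comparing with the scalar Riccati equations $y' + y^2 = 1 \pm \epsilon$, whose analysis parallels that of Lemma \ref{estfrriccatiequa} and yields solutions remaining within $O(\epsilon)$ of $1$ for large $\rho$, one obtains $|\lambda_{\max}(\rho) - 1|$ and $|\lambda_{\min}(\rho) - 1|$ of order $\epsilon$ for $\rho$ large, uniformly in the choice of normal geodesic. Letting $\epsilon \to 0$ shows that every eigenvalue of $\sff$ tends uniformly to $1$ at infinity, hence $H = \sum_{i=1}^{n-1}\lambda_i = (n-1) + o(1)$.

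The main obstacle is the Lipschitz (rather than smooth) regularity of the extremal eigenvalues, which can cross: one must formulate the above Riccati inequalities using Dini derivatives or a $\max$/$\min$-principle argument. Once that technicality is handled, the reduction to the scalar Riccati picture of Lemma \ref{estfrriccatiequa} is a direct comparison, with the uniformity of the $o(1)$ estimate following from compactness of $\bfB$ and the uniform decay of $W$ on $X \setminus \bD_R$ as $R \to \infty$.
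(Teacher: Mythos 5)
Your proposal is correct and follows essentially the same route as the paper: both substitute the Einstein decomposition \eqref{eqRiemEinstein} into the Riccati equation \eqref{eqRiccati}, invoke Lemma \ref{lmDecayWeyl} to make the Weyl term an $O(\epsilon)$ perturbation outside a compact set, and reduce to the scalar Riccati comparison of Lemma \ref{estfrriccatiequa} --- the eigenvalue/Dini-derivative argument you outline is precisely the ``standard methods'' the paper cites from Petersen to control $\left|\sff - \delta\right|$. The only cosmetic difference is that you make the matrix-to-scalar reduction explicit, whereas the paper states the resulting estimate on $\left|\sff-\delta\right|$ directly.
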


\begin{proof}
We fix an arbitrary $\epsilon > 0$. From Equation \eqref{eqRiemEinstein}, the Riccati equation for
the Weingarten operator \eqref{eqRiccati} can be rewritten as follows:
\[
\frac{\partial}{\partial \rho} \sffud{j}{i} + \sffud{j}{k} \sffud{k}{i} = \kronecker{j}{i} - \weyluddd{j}{0}{i}{0}.
\]

From Lemma \ref{lmDecayWeyl}, there exists $\rho_0 > 0$ such that $\left|\weyl\right| < \epsilon$
on $X \setminus \bD_{\rho_0}$. It follows from standard methods (see e.g. \cite[Chapter 6]{Petersen})
together with Lemma \ref{estfrriccatiequa} that $\sff$ satisfies
\[
\left|\sff - \delta\right| \leq \left(\sup_{\Sigma_{\rho_0}} \left|\sff-\delta\right|\right) e^{-(\rho-\rho_0)/2} + \epsilon
\]
on $X^n \setminus \bD{\rho_0}$. In particular, $H = \tr(S)$ is controlled at infinity:
\[
\left|H - (n-1)\right| \leq (n-1) \left(\sup_{\Sigma_{\rho_0}} \left|\sff-\delta\right|\right) e^{-(\rho-\rho_0)/2} + (n-1)\epsilon.
\]
Since $\epsilon$ was arbitrary, this proves the lemma.
\end{proof}

As a consequence of this lemma, we get the following $L^2$-estimate:

\begin{lemma}[Cheng-Yau estimate]\label{lmChengYau}
Assume that $\|\weyl\|_{L^p(X^n,g)} <\infty$ for some $p \in (1; \infty)$. For every $\epsilon > 0$, there
exists a compact subset $K_\epsilon \supset \bD$ such that for any $u \in C^\infty_c(X\setminus K_\epsilon)$,
\[-\int_X u \triangle u \,dV_g \geq \left[\frac{(n-1)^2}{4}-\epsilon\right] \int_X u^2 dV_g.\]
\end{lemma}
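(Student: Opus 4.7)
The plan is to invoke a Barta-type comparison argument with the explicit test function $\phi \definedas e^{-\frac{n-1}{2}\rho}$, which is smooth and positive on $X \setminus \bD$ since $\rho$ is smooth and has no critical point outside $\bD$ (by the properties of the essential set recalled in the introduction).

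First I would carry out the direct computation of $\triangle \phi / \phi$. Using $|\nabla \rho| = 1$ together with the standard identity $\triangle \rho = H$, where $H$ is the mean curvature of the level sets $\Sigma_\rho$ with respect to $\nabla \rho$, one gets for any function of $\rho$ the formula $\triangle \phi = \phi''(\rho) + H \phi'(\rho)$. Plugging in $\phi = e^{-\frac{n-1}{2}\rho}$ yields
\[
\frac{-\triangle \phi}{\phi} = \frac{n-1}{2} H - \frac{(n-1)^2}{4}.
\]
Now Lemma \ref{lmMeanCurvEstimate} gives $H = (n-1) + o(1)$ as $\rho \to \infty$, so the right-hand side tends to $\frac{(n-1)^2}{4}$. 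Given $\epsilon > 0$, pick $R_\epsilon$ large enough that $\frac{-\triangle \phi}{\phi} \geq \frac{(n-1)^2}{4} - \epsilon$ on $X \setminus \bD_{R_\epsilon}$, and set $K_\epsilon \definedas \bD_{R_\epsilon}$.

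Next I would perform the Barta substitution. For $u \in C^\infty_c(X \setminus K_\epsilon)$, write $u = \phi v$ with $v \in C^\infty_c$, expand $\triangle(\phi v)$, and integrate by parts. The cross terms cancel exactly, yielding the identity
\[
-\int_X u \triangle u \, dV_g = \int_X \frac{-\triangle \phi}{\phi} \, u^2 \, dV_g + \int_X \phi^2 |\nabla v|^2 \, dV_g.
\]
Dropping the non-negative gradient term and applying the pointwise lower bound on $-\triangle\phi/\phi$ obtained above yields the claimed inequality.

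There is no real obstacle here beyond bookkeeping; the only subtle point is ensuring that the ingredients used are legitimately available. In particular, the identity $\triangle \rho = H$ requires $\rho$ smooth outside $\bD$ (granted by the essential set hypothesis, as recalled after Definition \ref{essentialset}), and the asymptotic $H \to n-1$ requires the $L^p$ bound on $\weyl$ via Lemma \ref{lmMeanCurvEstimate}. Both are already in hand.
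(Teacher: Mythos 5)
Your proposal is correct and follows essentially the same route as the paper: the same test function $\phi = e^{-\frac{n-1}{2}\rho}$, the same use of Lemma \ref{lmMeanCurvEstimate} to bound $-\triangle\phi/\phi$ from below by $\frac{(n-1)^2}{4}-\epsilon$ outside a large $\bD_{\rho_0}$, and the same Barta-type substitution $u=\phi\cdot(u/\phi)$ with integration by parts to isolate a non-negative gradient term. The only cosmetic difference is that you state the cancellation of cross terms as an exact identity, whereas the paper carries it through as a chain of inequalities.
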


\begin{proof}
We set $\phi = e^{-\frac{n-1}{2} \rho}$. Remark that if $\rho_0$ is large enough,
$H \geq (n-1) - \frac{2\epsilon}{n-1}$ on $X \setminus \bD_{\rho_0}$:
\[
-\triangle \phi = -\frac{(n-1)^2}{4} \phi + \frac{n-1}{2} H \phi \geq \left(\frac{(n-1)^2}{4} - \epsilon\right) \phi.
\]

We rewrite

\[
\begin{aligned}
\triangle u & = \triangle \left(\phi \frac{u}{\phi}\right)\\
            & = \frac{\triangle \phi}{\phi} u + 2 \left\<d\phi, d\frac{u}{\phi}\right\> + \phi \triangle \frac{u}{\phi}.
\end{aligned}
\]

So,

\[
\begin{aligned}
- \int_{X \setminus \bD_{\rho_0}} u \triangle u\, dV_g
  & = - \int_{X \setminus \bD_{\rho_0}} \frac{\triangle \phi}{\phi} u^2\, dV_g - 2 \int_{X \setminus \bD_{\rho_0}} u \left\<d\phi, d\frac{u}{\phi}\right\>\, dV_g
      - \int_{X \setminus \bD_{\rho_0}} u \phi \triangle \frac{u}{\phi}\, dV_g\\
  & \geq  \left(\frac{(n-1)^2}{4} - \epsilon\right) \int_{X \setminus \bD_{\rho_0}} u^2\, dV_g - 2 \int_{X \setminus \bD_{\rho_0}} u \left\<d\phi, d\frac{u}{\phi}\right\>\, dV_g\\
  & \qquad + \int_{X \setminus \bD_{\rho_0}} \left\< d(u \phi), d\frac{u}{\phi}\right\>\, dV_g\\
  & \geq  \left(\frac{(n-1)^2}{4} - \epsilon\right) \int_{X \setminus \bD_{\rho_0}} u^2\, dV_g - \int_{X \setminus \bD_{\rho_0}} u \left\<d\phi, d\frac{u}{\phi}\right\>\, dV_g\\
  & \qquad + \int_{X \setminus \bD_{\rho_0}} \phi\left\< d u, d\frac{u}{\phi}\right\>\, dV_g\\
  & \geq \left(\frac{(n-1)^2}{4} - \epsilon\right) \int_{X \setminus \bD_{\rho_0}} u^2\, dV_g + \int_{X \setminus \bD_{\rho_0}} \phi^2 \left|d\frac{u}{\phi}\right|^2\, dV_g\\
  & \geq \left(\frac{(n-1)^2}{4} - \epsilon\right) \int_{X \setminus \bD_{\rho_0}} u^2\, dV_g.
\end{aligned}
\]
\end{proof}

As noted in \cite{GL} and \cite{Le}, this simple estimate immediately yields an estimate for the
covariant Laplacian acting on tensor fields by making use of Kato's inequality. Unfortunately,
this estimate is not sharp enough to get useful estimates. In \cite{Le}, Lee mainly deals with
symmetric 2-tensors. In order to get sharp estimates he considers $r$-tensor fields as
$(r-1)$-tensor-valued $1$-forms. However our interest is in tensors which can be seen as
$\Lambda^2 X$-valued $2$-forms. Some new observations are needed. Let us begin with the following
definition:

\begin{defn}\label{T2Xvaluedpform}
We say that a $(0, p+2)$-tensor $\omega$ belongs to $\Lambda^p T^{*2} X$, if it satisfies
\begin{eqnarray*}
\lefteqn{\omega(Y_1,Y_2; Z_1,\cdots,Z_s,\cdots,Z_l,\cdots,Z_p)}\\
  & = & -\omega(Y_1,Y_2; Z_1,\cdots,Z_{s-1},Z_l,Z_{s+1},\cdots,Z_{l-1},Z_s, Z_{l+1},\cdots,Z_p),
\end{eqnarray*}
for every $Y_1,Y_2,Z_1,\cdots,Z_p \in TX$ and any pair $s, l$ with $1 \leq s < l \leq p$.
\end{defn}

It can be easily shown that in local coordinates $(x^\mu)$ a $(0, p+2)-tensor~\omega\in \Lambda^p T^{*2} X$ can be written as
$$\omega=\frac{1}{p!}\omega_{\mu\nu \alpha_1\cdots \alpha_p}dx^\mu\otimes dx^\nu\otimes(dx^{\alpha_1}\wedge\cdots \wedge dx^{\alpha_p}),$$
where the coefficients
$$
\omega_{\mu\nu\alpha_1\cdots \alpha_p}=\omega(\frac{\partial}{\partial x^\mu},\frac{\partial}{\partial x^\nu};
  \frac{\partial}{\partial x^{\alpha_1}},\cdots,\frac{\partial}{\partial x^{\alpha_p}})
$$
satisfy

$$\omega_{\mu\nu \alpha_1\cdots \alpha_l\cdots \alpha_s\cdots \alpha_p}=-\omega_{\mu\nu \alpha_1\cdots \alpha_s\cdots \alpha_l\cdots \alpha_p}\quad (1\leq s< l\leq p).$$

For any local orthogonal frame $\{e_\mu\}$ and dual coframe $\{e^\mu\}$, the exterior derivative
$$D:C^\infty(X;\Lambda^p T^{*2} X )\rightarrow C^\infty(X;\Lambda^{p+1} T^{*2} X)$$
on $T^{*2} X$-valued $p$-forms is given by
$$D\omega \definedas e^\mu\wedge\nabla_{e_\mu}\omega$$
for every $\omega\in\Lambda^p T^{*2} X$. It is standard matter to check that $D$ does not depend on
the choice of the frame $\{e^\mu\}$, see e.g. \cite{BishopGoldberg}. This can be seen as a consequence
of the following proposition which gives an intrinsic definition of $D$:

\begin{prop}\label{D}
If $\omega\in\Lambda^{p} T^{*2} X$, then
 $$D\omega (X_1,X_2; Y_0,\cdots, Y_p)=\sum_{m=0}^{p}(-1)^m(\nabla_{Y_m}\omega)(X_1,X_2; Y_0,\cdots,\widehat{Y}_m, \cdots Y_{p}),$$
 for any $X_1,X_2,Y_0,\cdots, Y_{p}\in TX.$
\end{prop}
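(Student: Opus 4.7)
The plan is to unwind the definition $D\omega = e^\mu \wedge \nabla_{e_\mu}\omega$ directly. Since the $T^{*2}X$-valued part of $\omega$ plays no role in the wedge (the wedge is taken only in the last $p$ slots, which become the last $p+1$ slots of $D\omega$), the standard formula for the wedge of a $1$-form with a $p$-form applies slot-by-slot.

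First I fix an arbitrary point $x \in X$ together with a local orthonormal frame $\{e_\mu\}$ near $x$ and its dual coframe $\{e^\mu\}$. For fixed $X_1, X_2 \in T_xX$, I view $\nabla_{e_\mu}\omega$ as an ordinary $p$-form in its last $p$ arguments, so that the usual scalar wedge-product expansion gives
\[
(e^\mu \wedge \nabla_{e_\mu}\omega)(X_1,X_2; Y_0,\ldots,Y_p)
 = \sum_{m=0}^p (-1)^m\, e^\mu(Y_m)\, (\nabla_{e_\mu}\omega)(X_1,X_2; Y_0,\ldots,\widehat{Y}_m,\ldots,Y_p),
\]
where the sum over $\mu$ is understood by the Einstein convention.

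Next I use the $C^\infty(X)$-linearity of the covariant derivative in its direction argument, together with the elementary identity $\sum_\mu e^\mu(Y)\, e_\mu = Y$ valid for any $Y \in TX$ (this is where orthonormality of the frame enters). Combining these, for each fixed $m$,
\[
\sum_\mu e^\mu(Y_m)\, (\nabla_{e_\mu}\omega)(X_1,X_2;\cdot)
  = \nabla_{\sum_\mu e^\mu(Y_m) e_\mu}\omega\,(X_1,X_2;\cdot)
  = (\nabla_{Y_m}\omega)(X_1,X_2;\cdot).
\]
Substituting this into the wedge-product expansion yields the claimed formula.

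Since the right-hand side is manifestly intrinsic, this computation additionally shows that $D\omega$ does not depend on the choice of orthonormal frame, justifying the well-definedness asserted before the proposition. There is no real obstacle here: the argument is a bookkeeping exercise, with the only point requiring care being the verification that the wedge product formula on scalar-valued forms lifts unchanged to $T^{*2}X$-valued forms (which follows because the $T^{*2}X$ factor sits in the first two slots and is untouched by both the wedge and the summation over $\mu$).
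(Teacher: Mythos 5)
Your proof is correct and follows essentially the same route as the paper: expand $D\omega=e^\mu\wedge\nabla_{e_\mu}\omega$ via the standard alternating-sum formula for the wedge of a $1$-form with a $p$-form and recombine $\sum_\mu e^\mu(Y_m)\nabla_{e_\mu}\omega=\nabla_{Y_m}\omega$ by tensoriality of $\nabla$ in the direction slot. The only (harmless) difference is that the paper works in a frame with $\nabla e_\mu=0$ at the point so the derivative hits only the coefficients, whereas you bypass that; note also that $\sum_\mu e^\mu(Y)e_\mu=Y$ needs only duality of the coframe, not orthonormality.
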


\begin{proof}
Choose a point $x \in X$ and an orthonormal frame $\{e_\mu\}$ such that $\nabla e_\mu = \nabla e^\mu = 0$ at $x$,
where $\{e^\mu\}$ is the coframe dual to $\{e_\mu\}$. For a $\omega\in\Lambda^p T^{*2} X$, $\omega$ can be written as

$$\omega=\frac{1}{p!}\omega_{\mu\nu; \alpha_1\alpha_2\cdots\alpha_p} e^\mu \otimes e^\nu \otimes(e^{\alpha_1}\wedge e^{\alpha_2}\wedge\cdots e^{\alpha_p}).$$

Then computing at $x$,

\begin{eqnarray*}
D\omega
 & = & e^\sigma\wedge\nabla_{e_\sigma}\omega\\
 & = & \frac{1}{p!}e^\sigma\wedge\nabla_{e_\sigma}(\omega_{\mu\nu; \alpha_1\alpha_2\cdots \alpha_p}e^\mu\otimes e^\nu\otimes(e^{\alpha_1}\wedge e^{\alpha_2}\wedge\cdots e^{\alpha_p}))\\
 & = & \frac{1}{p!}e^\sigma\wedge(\nabla_{e_\sigma}\omega_{\mu\nu; \alpha_1\alpha_2\cdots \alpha_p})e^\mu\otimes e^\nu\otimes(e^{\alpha_1}\wedge e^{\alpha_2}\wedge\cdots e^{\alpha_p})\\
 & = & \frac{1}{p!}(\nabla_{e_\sigma}\omega_{\mu\nu; \alpha_1\alpha_2\cdots \alpha_p})e^\mu\otimes e^\nu\otimes(e^\sigma\wedge e^{\alpha_1}\wedge e^{\alpha_2}\wedge\cdots e^{\alpha_p}).\\
\end{eqnarray*}

Hence

\begin{eqnarray*}
D\omega (Y_0,\cdots, Y_p)
 & = & \frac{1}{p!}(\nabla_{e_\sigma}\omega_{\mu\nu; \alpha_1\alpha_2\cdots \alpha_p})e^\mu\otimes e^\nu\otimes(e^\sigma\wedge e^{\alpha_1}\wedge\cdots \wedge e^{\alpha_p})(Y_0,\cdots, Y_p)\\
 & = & \left(\frac{1}{p!}\sum_{m=0}^{p}(-1)^m e^\sigma(Y_m)(\nabla_{e_\sigma}\omega_{\mu\nu; \alpha_1\alpha_2\cdots \alpha_p})e^\mu\otimes e^\nu\otimes( e^{\alpha_1}\wedge\cdots \wedge e^{\alpha_p})\right)\\
 & & \qquad \cdot (Y_0,\cdots,\widehat{Y}_m, \cdots Y_p)\\
 & = & \sum_{m = 0}^{p}(-1)^m e^\sigma(Y_m)(\nabla_{e_\sigma}\omega)(Y_0,\cdots,\widehat{Y}_m, \cdots Y_p)\\
 & = & \sum_{m = 0}^{p}(-1)^m (\nabla_{Y_m}\omega)(Y_0,\cdots,\widehat{Y}_m, \cdots Y_p).
\end{eqnarray*}
\end{proof}

Let $D^\ast$ be the formal $L^2$-adjoint of $D$. If $\omega\in\Lambda^p T^{*2} X$, we define the divergence of $\omega$, $\divg ~\omega \in\Lambda^{p-1} T^{*2} X$, as follows:

$$\divg~\omega(X_1, X_2; Y_1,\cdots, Y_{p-1}) \definedas \sum_{m=1}^{n}(\nabla_{e_m}\omega)(X_1, X_2; e_m,Y_1,\cdots, Y_{p-1}).$$

In local coordinates, that is

$$(\divg~\omega)_{\mu\nu; \alpha_1\alpha_2\cdots \alpha_{p-1}} = g^{\gamma\delta}\nabla_\gamma \omega_{\mu\nu; \delta\alpha_1\cdots \alpha_{p-1}}.$$

\begin{prop}\label{divergence}
On $\Lambda^p T^{*2} X$, $D^\ast=-\divg.$
\end{prop}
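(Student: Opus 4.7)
The plan is to verify the identity $\langle D\omega, \eta \rangle_{L^2} = -\langle \omega, \divg \eta \rangle_{L^2}$ for all compactly supported $\omega \in \Lambda^{p-1} T^{*2} X$ and $\eta \in \Lambda^p T^{*2} X$, which is the definition of $D^* = -\divg$ restricted to $\Lambda^p T^{*2} X$. Everything reduces to an integration by parts plus bookkeeping of antisymmetrization factors.

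First I would fix a point $x \in X$ and work in a local orthonormal frame $\{e_\mu\}$ adapted so that $\nabla e_\mu = 0$ at $x$. Using Proposition \ref{D}, the components of $D\omega \in \Lambda^p T^{*2} X$ are
\[
(D\omega)_{\mu\nu;\alpha_0\alpha_1\cdots\alpha_{p-1}} = \sum_{m=0}^{p-1} (-1)^m \nabla_{\alpha_m} \omega_{\mu\nu;\alpha_0\cdots\widehat{\alpha_m}\cdots\alpha_{p-1}}.
\]
With the natural inner product on $\Lambda^p T^{*2} X$ (carrying the factor $\tfrac{1}{p!}$ reflecting antisymmetry), compute
\[
\langle D\omega, \eta \rangle = \frac{1}{p!} \sum_{m=0}^{p-1} (-1)^m \bigl(\nabla^{\alpha_m} \omega^{\mu\nu;\alpha_0\cdots\widehat{\alpha_m}\cdots\alpha_{p-1}}\bigr) \eta_{\mu\nu;\alpha_0\cdots\alpha_{p-1}}.
\]

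Next I would exploit the antisymmetry of $\eta$ in the $\alpha$-slot. For each $m$, moving $\alpha_m$ to the leading position in $\eta_{\mu\nu;\alpha_0\cdots\alpha_{p-1}}$ produces a factor of $(-1)^m$, and relabeling the dummy indices shows that all $p$ terms in the sum are equal. The $(-1)^m$ signs cancel, so
\[
\langle D\omega, \eta \rangle = \frac{1}{(p-1)!} \bigl(\nabla^{\beta} \omega^{\mu\nu;\alpha_1\cdots\alpha_{p-1}}\bigr) \eta_{\mu\nu;\beta\alpha_1\cdots\alpha_{p-1}}.
\]

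Finally, since $\omega$ and $\eta$ are compactly supported, the divergence theorem gives
\[
\int_X \langle D\omega, \eta \rangle\, dV_g = -\int_X \frac{1}{(p-1)!} \omega^{\mu\nu;\alpha_1\cdots\alpha_{p-1}} \nabla^{\beta} \eta_{\mu\nu;\beta\alpha_1\cdots\alpha_{p-1}}\, dV_g = -\int_X \langle \omega, \divg\,\eta\rangle\, dV_g,
\]
using the definition $(\divg\,\eta)_{\mu\nu;\alpha_1\cdots\alpha_{p-1}} = g^{\gamma\delta}\nabla_\gamma \eta_{\mu\nu;\delta\alpha_1\cdots\alpha_{p-1}}$. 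This establishes $D^* = -\divg$ on $\Lambda^p T^{*2} X$.

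The one genuine piece of care in the argument is the combinatorial step of reducing the sum over $m$ to a single term, where both the $(-1)^m$ from the formula of Proposition \ref{D} and the $(-1)^m$ coming from pulling $\alpha_m$ through the antisymmetric slots of $\eta$ must be tracked simultaneously; everything else is a routine integration by parts.
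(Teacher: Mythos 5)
Your proposal is correct and follows essentially the same route as the paper: pair $D\omega$ against a test section, use the antisymmetry in the form slots to cancel the $(-1)^m$ signs and collapse the sum over $m$ to a single term (converting the $\tfrac{1}{p!}$ into $\tfrac{1}{(p-1)!}$), and then integrate by parts to land the derivative on the other factor as $-\divg$. The combinatorial bookkeeping you flag as the delicate point is exactly the step the paper's computation carries out.
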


\begin{proof}
Select arbitrary $\theta\in\Lambda^p T^{*2} X$ and $\omega\in\Lambda^{p-1} T^{*2} X$ with compact support.
Then it follows from Proposition \ref{D} that

\begin{eqnarray*}
\int_{X^n}\left\<\theta, D\omega\right\> dV_g
  & = & \frac{1}{p!}\int_{X^n} \theta^{\mu\nu; \alpha_0\cdots \alpha_{p-1}} \left(\sum_{m=0}^{p-1}(-1)^m\nabla_{e_{\alpha_\sigma}}\omega_{\mu\nu; \alpha_0\cdots \widehat{\alpha}_m \cdots \alpha_{p-1}}\right) dV_g\\
  & = & \frac{1}{p!}\int_{X^n} \sum_{m=0}^{p-1} \theta^{\mu\nu; \alpha_m\alpha_0\cdots\widehat{\alpha}_m\cdots \alpha_{p-1}} \nabla_{e_{k_m}}\omega_{\mu\nu; \alpha_0\cdots \widehat{\alpha}_m\cdots \alpha_{p-1}} dV_g\\
  & = & \frac{1}{(p-1)!}\int_{X^n} \sum_{m=0}^{p-1} \theta^{\mu\nu; \alpha_m \alpha_0\cdots\widehat{\alpha}_m\cdots \alpha_{p-1}} \nabla_{e_{\alpha_m}}\omega_{\mu\nu; \alpha_0\cdots \widehat{\alpha}_m\cdots \alpha_{p-1}} dV_g\\
  & = & \frac{1}{(p-1)!}\int_{X^n} \sum_{m=0}^{p-1} (-\divg~\theta)^{\mu\nu; \alpha_0\cdots\widehat{\alpha}_m\cdots \alpha_{p-1}} \omega_{\mu\nu; \alpha_0\cdots \widehat{\alpha}_m\cdots \alpha_{p-1}} dV_g\\
  & = & \int_{X^n}\left\< -\divg~\theta, \omega \right\> dV_g.
\end{eqnarray*}
\end{proof}

We define the Hodge Laplacian on $T^{*2} X$-valued $p$-forms $\Lambda^p T^{*2} X$ as follows

$$\widetilde{\triangle} \definedas DD^*+D^*D,$$

\noindent and the covariant Laplace operator on $\omega \in \Lambda^p T^{*2} X$ by

$$\triangle \omega = \tr(\nabla^2 \omega),$$
where the trace is taken with respect to the two indices of the Hessian.

\begin{prop}
If $\omega\in T^{*2} X$, then $\widetilde{\triangle}\omega=-\triangle \omega$.
\end{prop}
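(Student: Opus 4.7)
The plan is a short direct computation using the two immediately preceding propositions, together with the observation that a $(0,2)$-tensor is a $T^{*2}X$-valued $0$-form. Since $D^*$ lowers the form-degree by one and there is no target of negative form-degree for it to land in, we have $D^*\omega = 0$ automatically. Consequently $\widetilde{\triangle}\omega = DD^*\omega + D^*D\omega$ collapses to $D^*D\omega$, and the proof reduces to evaluating this single term.

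Applying Proposition \ref{D} with $p = 0$, the alternating sum contains only one summand, yielding the simple expression $(D\omega)_{\mu\nu;\alpha} = \nabla_\alpha \omega_{\mu\nu}$. Then by Proposition \ref{divergence} we have $D^* = -\divg$, so using the local-coordinate formula for $\divg$ on a $T^{*2}X$-valued $1$-form I would compute
\[
(D^*D\omega)_{\mu\nu} = -g^{\gamma\delta}\nabla_\gamma (D\omega)_{\mu\nu;\delta} = -g^{\gamma\delta}\nabla_\gamma \nabla_\delta \omega_{\mu\nu}.
\]
Since $\triangle$ is defined as the trace of the Hessian acting on the $T^{*2}X$-indices, the right-hand side is exactly $-\triangle \omega_{\mu\nu}$, which gives the desired identity $\widetilde{\triangle}\omega = -\triangle \omega$.

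There is no genuine obstacle to this argument; the only care needed is in unwinding the conventions from Propositions \ref{D} and \ref{divergence}. The reason the identity holds so cleanly is that $\omega$ carries no antisymmetric form-type indices, so no derivative-commutators on such indices are triggered and no Weitzenböck-type curvature correction appears. For $\omega \in \Lambda^p T^{*2}X$ with $p \geq 1$ the analogous computation would produce an extra term involving $[\nabla^\gamma,\nabla_{\alpha_m}]\omega_{\mu\nu;\gamma \cdots}$, which is precisely why the proposition is stated only for ordinary $(0,2)$-tensors.
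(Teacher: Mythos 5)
Your proof is correct, and it is the natural (essentially unique) verification: $D^*\omega=0$ in degree zero, $(D\omega)_{\mu\nu;\alpha}=\nabla_\alpha\omega_{\mu\nu}$ by Proposition \ref{D} with $p=0$, and $D^*D\omega=-g^{\gamma\delta}\nabla_\gamma\nabla_\delta\omega=-\triangle\omega$ by Proposition \ref{divergence}. The paper actually states this proposition without proof, so your argument simply supplies the omitted computation; your closing remark about the curvature corrections that appear for $p\geq 1$ is also consistent with the Weitzenb\"ock formula the paper later derives in Lemma \ref{f}.
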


For a 1-form $\theta \in T^*X$, we let $\theta\vee: \Lambda^p T^{*2} X \rightarrow \Lambda^{p-1} T^{*2} X$
denote the adjoint of the map $\theta\wedge: \Lambda^{p-1} T^{*2} X \rightarrow \Lambda^p T^{*2} X$ with respect to $g$,
so that $\left\<\theta\wedge \omega, \eta\right\> = \left\<\omega, \theta\vee\eta\right\>$ for
$\omega\in\Lambda^p T^{*2} X$ and $\eta\in\Lambda^{p+1} T^{*2} X$. In coordinates,

$$(\theta\vee \omega)_{\mu\nu; \alpha_1\cdots \alpha_{p-1}} = g^{\gamma\delta}\theta_\gamma \omega_{\mu\nu; \delta \alpha_1\cdots \alpha_{p-1}}.$$

For any $\xi \in \Lambda^p T^{*2} X$ and any function $u$, we define $H(u)\xi$ as

\begin{equation} \label{eqDefH}
 H(u)\xi \definedas (\hessdd{e_\mu}{e_\nu} u) e^\mu\wedge(e^\nu\vee\xi).
\end{equation}

\begin{prop}\label{propH}
Let $\omega \in \Lambda^p T^2X$ and $f$ be a function. We have
\begin{enumerate}
\item $D(f\omega) = f D \omega+ df \wedge \omega;$
\item $D^*(f\omega) = f D^*\omega - df \vee \omega;$
\item $D^*(df\wedge\omega) = - (\triangle f) \omega - \nabla_{\grad f} \omega - df \wedge D^*\omega + H(f)\omega;$
\item $|df\wedge\omega|^2 + |df\vee\omega|^2 = |df|^2 |\omega|^2.$
\end{enumerate}
\end{prop}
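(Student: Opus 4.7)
The plan is to verify the four identities mostly by direct computation, treating the underlying $(0,2)$-tensor slot as a silent spectator since all the operators $D$, $D^*$, $\wedge$, $\vee$, and $H(f)$ act only on the $p$-form indices. Throughout I will work at a point $x$ in a local orthonormal frame $\{e_\mu\}$ normal at $x$, so that $\nabla e_\mu = 0$ there, and use the intrinsic formula from Proposition \ref{D} together with Proposition \ref{divergence}.

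For (1), I start from the intrinsic formula
\[
D(f\omega)(X_1,X_2;Y_0,\dots,Y_p) = \sum_{m=0}^p (-1)^m \nabla_{Y_m}(f\omega)(X_1,X_2;Y_0,\dots,\widehat{Y}_m,\dots,Y_p)
\]
and apply the Leibniz rule $\nabla_{Y_m}(f\omega) = (Y_m f)\omega + f \nabla_{Y_m}\omega$. The first contribution collects to $(df\wedge\omega)(Y_0,\dots,Y_p)$ by the very definition of $\wedge$ (as stated in the formula for $df \wedge \omega$ in terms of $df(Y_m)$), while the second gives $f\, D\omega$. For (2), I use $D^* = -\divg$ from Proposition \ref{divergence}, so that in coordinates
\[
(D^*(f\omega))_{\mu\nu;\alpha_1\cdots\alpha_{p-1}} = -g^{\gamma\delta}\nabla_\gamma(f\,\omega_{\mu\nu;\delta\alpha_1\cdots\alpha_{p-1}}).
\]
The Leibniz rule gives $-f\,g^{\gamma\delta}\nabla_\gamma \omega_{\mu\nu;\delta\alpha_1\cdots\alpha_{p-1}} - g^{\gamma\delta}(\partial_\gamma f)\omega_{\mu\nu;\delta\alpha_1\cdots\alpha_{p-1}}$, which by definition of $\vee$ is exactly $fD^*\omega - df\vee\omega$.

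The substantive step is (3). I expand $D^*(df\wedge\omega) = -\divg(df\wedge\omega)$ in coordinates using the component formula
\[
(df\wedge\omega)_{\mu\nu;\delta\alpha_1\cdots\alpha_p} = (\partial_\delta f)\,\omega_{\mu\nu;\alpha_1\cdots\alpha_p} + \sum_{m=1}^p (-1)^m (\partial_{\alpha_m} f)\,\omega_{\mu\nu;\delta\alpha_1\cdots\widehat{\alpha}_m\cdots\alpha_p}.
\]
Differentiating with $g^{\gamma\delta}\nabla_\gamma$ and applying the Leibniz rule produces four types of terms: (a) $g^{\gamma\delta}(\nabla_\gamma\partial_\delta f)\omega = (\triangle f)\omega$ from the first piece; (b) $g^{\gamma\delta}(\partial_\delta f)\nabla_\gamma\omega_{\mu\nu;\alpha_1\cdots\alpha_p} = (\nabla_{\nabla f}\omega)_{\mu\nu;\alpha_1\cdots\alpha_p}$; (c) the Hessian terms $\sum_{m=1}^p (-1)^m g^{\gamma\delta}(\nabla_\gamma\partial_{\alpha_m} f)\omega_{\mu\nu;\delta\alpha_1\cdots\widehat{\alpha}_m\cdots\alpha_p}$, which I must recognize as precisely $-H(f)\omega$ from the definition \eqref{eqDefH} (the extra minus sign here combines with the global $-\divg$ sign to produce $+H(f)\omega$ in the final formula); and (d) the remaining derivative-of-$\omega$ contributions, which reassemble, after relabeling the dummy $\delta$ as the exterior index, into $df\wedge D^*\omega$. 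Putting everything together with the overall minus sign from $-\divg$ yields the stated formula. The main obstacle is item (c): one has to match the Hessian contraction pattern against the definition $H(f)\xi = (\hessdd{e_\mu}{e_\nu} f)\,e^\mu\wedge(e^\nu\vee\xi)$ and check that the signs from antisymmetrization exactly produce $H(f)\omega$ rather than some other ordering.

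Finally, (4) reduces to the classical exterior-algebra identity
\[
\theta\wedge(\theta\vee\omega) + \theta\vee(\theta\wedge\omega) = |\theta|^2\,\omega
\]
for any $1$-form $\theta$ and $\omega\in\Lambda^p T^{*2}X$, applied with $\theta = df$; this identity holds pointwise on a single fiber and is purely algebraic, the $(0,2)$-part being inert. Taking the inner product of this identity with $\omega$ and using that $\wedge$ and $\vee$ are $L^2$-pointwise adjoints (by construction of $\vee$) gives $|df\vee\omega|^2 + |df\wedge\omega|^2 = |df|^2|\omega|^2$, which is (4).
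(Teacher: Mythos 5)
Your proposal is correct, and for items (1)--(3) it follows essentially the same path as the paper: Leibniz rule applied to the frame/intrinsic definition of $D$ for (1), to $D^*=-\divg$ in coordinates for (2), and for (3) the same four-way splitting of $-\divg(df\wedge\omega)$ into the $(\triangle f)\omega$, $\nabla_{\nabla f}\omega$, Hessian, and $df\wedge D^*\omega$ contributions; your sign bookkeeping for the Hessian block (an internal minus from the wedge convention cancelling against the global minus of $-\divg$ to give $+H(f)\omega$) matches what the paper's computation implicitly does. The only genuine divergence is (4): the paper expands $\left|df\wedge\omega\right|^2$ directly in components, separates the diagonal terms (which give $|df|^2|\omega|^2$) from the cross terms (which reassemble into $-|df\vee\omega|^2$), whereas you invoke the pointwise anticommutation identity $\theta\wedge(\theta\vee\omega)+\theta\vee(\theta\wedge\omega)=|\theta|^2\omega$ and then pair with $\omega$ using the adjointness of $\wedge$ and $\vee$, which is built into the paper's definition of $\vee$. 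Your route is cleaner and less error-prone (the paper's combinatorial expansion is exactly where normalization slips tend to occur), at the cost of having to know, or separately verify, the Clifford-type identity on $\Lambda^p T^{*2}X$; since the $(0,2)$-slot is inert, that identity is the standard one for scalar-valued forms and holds with the paper's conventions, so nothing is missing.
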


\begin{proof}~
\begin{enumerate}
\item According to the definition,
\begin{eqnarray*}
D(f\omega)
 & = & e^\mu \wedge\nabla_{e_\mu}(f\omega)\\
 & = & e^\mu \wedge(e_\mu(f) \omega + f\nabla_{e^\mu}\omega)\\
 & = & f D\omega + df\wedge\omega;
\end{eqnarray*}

\item In local coordinates,
\begin{eqnarray*}
D^*(f\omega)_{\mu\nu; \alpha_1\cdots \alpha_{p-1}}
 & = & -\divg(f\omega)_{\mu\nu; \alpha_1\cdots \alpha_{p-1}}\\
 & = & -g^{\gamma\delta}\nabla_\gamma (f\omega)_{\mu\nu; \alpha_1 \cdots \alpha_{p-1}}\\
 & = & -g^{\gamma\delta} (f \nabla_\gamma \omega_{\mu\nu; \delta \alpha_1\cdots \alpha_{p-1}} + \nabla_\gamma f \omega_{\mu\nu; \delta\alpha_1\cdots \alpha_{p-1}})\\
 & = & - f \divg\omega - g^{\gamma\delta} \nabla_a f \omega_{\mu\nu; \delta\alpha_1\cdots \alpha_{p-1}}\\
 & = & f D^*\omega_{\mu\nu; \alpha_1\cdots \alpha_{p-1}} - (df \vee \omega)_{\mu\nu; \alpha_1\cdots \alpha_{p-1}};
\end{eqnarray*}

\item \begin{eqnarray*}
(df \wedge \omega)_{\mu\nu; \delta\alpha_1\cdots \alpha_p}
  & = & (\nabla_b f) \omega_{\mu\nu; \alpha_1\cdots \alpha_p} + \sum_{m=1}^p (-1)^m (\nabla_{\alpha_m}f)\omega_{\mu\nu; \delta\alpha_1\cdots\widehat{\alpha_m}\cdots \alpha_p},\\
D^*(df\wedge \omega)_{\mu\nu; \alpha_1\cdots \alpha_p}
 & = & - \divg(df\wedge \omega)_{\mu\nu; \alpha_1\cdots \alpha_p}\\
 & = & -g^{\gamma\delta} \nabla_\gamma (df\wedge \omega)_{\mu\nu; \delta\alpha_1\cdots \alpha_p}\\
 & = & -g^{\gamma\delta} (\hessdd{\gamma}{\delta} f \omega_{\mu\nu; \alpha_1\cdots \alpha_p} + (\nabla_\delta f) \nabla_\gamma \omega_{\mu\nu; \alpha_1\cdots \alpha_p})\\
 & & \qquad -\sum_{m=1}^p g^{\gamma\delta}((-1)^m (\hessdd{\alpha_m}{\gamma} f) \omega_{\mu\nu; \delta\alpha_1\cdots\widehat{\alpha_m}\cdots \alpha_p}\\
 & & \qquad + (-1)^m (\nabla_{\alpha_m} f) \nabla_\gamma \omega_{\mu\nu; \delta\alpha_1\cdots\widehat{\alpha_m}\cdots \alpha_p})\\
 & = & ((-\triangle f) \omega-\nabla_{\grad f} \omega+ H(f)\omega-df\wedge D^*\omega)_{\mu\nu; \alpha_1\cdots \alpha_p}.
\end{eqnarray*}

\item \begin{eqnarray*}
|df\wedge\omega|^2
 & = &\frac{1}{(p+1)!}\left(\sum_{m=1}^p(-1)^m (\nabla_{\alpha_m} f) \omega_{\mu\nu; \alpha_0\cdots\widehat{\alpha_m}\cdots \alpha_p}\right) \left(\sum_{s=1}^p(-1)^s (\nabla^{\alpha_s} f) \omega^{\mu\nu; \alpha_0\cdots\widehat{\alpha_s}\cdots \alpha_p}\right)\\
 & = & \frac{1}{(p+1)!}\sum_{m=1}^p\left((-1)^m (\nabla_{\alpha_m} f) \omega_{\mu\nu; \alpha_0\cdots\widehat{\alpha_{m}}\cdots \alpha_p} \sum_{s=1,s \neq m}^p (-1)^s (\nabla^{\alpha_s} f) \omega^{\mu\nu; \alpha_0\cdots\widehat{\alpha_s}\cdots \alpha_p}\right.\\
 && \qquad \left. + (\nabla_{\alpha_m} f) \omega_{\mu\nu; \alpha_0\cdots\widehat{\alpha_{m}}\cdots \alpha_p} (\nabla^{\alpha_m} f) \omega^{\mu\nu; \alpha_0\cdots\widehat{\alpha_{m}}\cdots \alpha_p}\right)\\
 & = & |df|^2 |\omega|^2 + \frac{1}{(p+1)!}\sum_{m=1}^p \left((-1)^m (\nabla_{\alpha_m} f) \omega_{\mu\nu; \alpha_0\cdots\widehat{\alpha_{m}}\cdots \alpha_p} \sum_{s=1,s \neq m}^p (-1)^s (\nabla^{\alpha_s} f) \omega^{\mu\nu; \alpha_0\cdots\widehat{\alpha_s}\cdots \alpha_p} \right)\\
 & = & |df|^2 |\omega|^2 - \frac{1}{(p+1)!}\sum_{m=1}^p\sum_{s=1,s\:\neq m}^p (\nabla_{\alpha_m} f) \omega_{\mu\nu; \alpha_s\alpha_0\cdots\widehat{\alpha_{m}}\cdots \widehat{\alpha_{s}}\cdots \alpha_p} (\nabla^{\alpha_s} f) \omega_{\mu\nu; \alpha_m\alpha_0\cdots\widehat{\alpha_{m}}\cdots\widehat{\alpha_{s}}\cdots \alpha_p}\\
 & = & |df|^2 |\omega|^2 - |df\vee\omega|^2.
\end{eqnarray*}
\end{enumerate}
\end{proof}

The following lemma is taken from \cite[Lemma 7.9]{Le}:

\begin{lem}\label{lmWitten}
For any smooth compactly supported section $\xi$ of $\Lambda^q T^{*2}X$, and any positive $C^2$ function $\varphi$ on $X$, the following integral formula holds
$$(\xi,\widetilde{\triangle}\xi)\geq\int_X\left<\xi, (-\varphi^{-1}\triangle\varphi+2H(\log \varphi)\xi)\right> dV_g.$$
Here $\left<\cdot,\cdot\right>$ is the induced inner product of tensor bundles and $(\cdot,\cdot)$ is $\int_{X^n} \left<\cdot,\cdot\right>dV_g$.
\end{lem}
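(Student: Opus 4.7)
Let $\psi \definedas \log\varphi$. Since $\xi$ has compact support, the adjointness of $D$ and $D^*$ gives $(\xi,\widetilde{\triangle}\xi) = \|D\xi\|_{L^2}^2 + \|D^*\xi\|_{L^2}^2$. My strategy is to lower-bound this sum by a completion of squares and then to evaluate the resulting cross terms via Proposition~\ref{propH}.

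Starting from the trivial pointwise inequalities $|D\xi - d\psi\wedge\xi|^2 \geq 0$ and $|D^*\xi - d\psi\vee\xi|^2 \geq 0$, I expand, integrate over $X$, add, and apply Proposition~\ref{propH}(4) (with $f=\psi$, $\omega=\xi$) to combine $|d\psi\wedge\xi|^2 + |d\psi\vee\xi|^2 = |d\psi|^2|\xi|^2$. This yields
\[
(\xi,\widetilde{\triangle}\xi) \;\geq\; 2\int_X \bigl(\langle D\xi,d\psi\wedge\xi\rangle + \langle D^*\xi,d\psi\vee\xi\rangle\bigr)\,dV_g \;-\; \int_X |d\psi|^2\,|\xi|^2\,dV_g.
\]

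To evaluate the sum of the two cross terms I integrate by parts. By adjointness of $D$ and $D^*$ (legitimate because $\xi$, and hence $d\psi\wedge\xi$, is compactly supported), $\int_X \langle D\xi, d\psi\wedge\xi\rangle\,dV_g = \int_X \langle \xi, D^*(d\psi\wedge\xi)\rangle\,dV_g$. I then substitute the formula of Proposition~\ref{propH}(3), namely $D^*(d\psi\wedge\xi) = -(\triangle\psi)\xi - \nabla_{\nabla\psi}\xi - d\psi\wedge D^*\xi + H(\psi)\xi$, use the pointwise adjointness $\langle \xi, d\psi\wedge D^*\xi\rangle = \langle d\psi\vee\xi, D^*\xi\rangle$, and apply one more integration by parts via
\[
\int_X \langle\xi, \nabla_{\nabla\psi}\xi\rangle\,dV_g \;=\; \tfrac12\int_X \langle \nabla|\xi|^2, \nabla\psi\rangle\,dV_g \;=\; -\tfrac12\int_X \triangle\psi\,|\xi|^2\,dV_g.
\]
The resulting key identity is
\[
\int_X\langle D\xi,d\psi\wedge\xi\rangle\,dV_g + \int_X\langle D^*\xi,d\psi\vee\xi\rangle\,dV_g \;=\; -\tfrac12\int_X \triangle\psi\,|\xi|^2\,dV_g + \int_X \langle\xi, H(\psi)\xi\rangle\,dV_g.
\]

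Plugging this identity into the completion-of-squares bound and simplifying by means of the chain-rule identity $\triangle(\log\varphi) + |\nabla\log\varphi|^2 = \varphi^{-1}\triangle\varphi$ produces exactly the claimed inequality. There is no serious obstacle; the proof is bookkeeping organized around Proposition~\ref{propH}. The only delicate point is the sign choice in the completion of squares: one must subtract (not add) $d\psi\wedge\xi$ and $d\psi\vee\xi$ so that the resulting bound involves the \emph{sum} $\langle D\xi, d\psi\wedge\xi\rangle + \langle D^*\xi, d\psi\vee\xi\rangle$, which is precisely the combination that Proposition~\ref{propH}(3) allows one to compute after integration by parts.
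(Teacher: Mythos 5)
Your proof is correct: the completion of squares, the evaluation of the cross terms via Proposition~\ref{propH}(3)--(4) and the adjointness of $\wedge$ and $\vee$, and the final simplification using $\triangle(\log\varphi)+|\nabla\log\varphi|^2=\varphi^{-1}\triangle\varphi$ all check out (and they also reveal that the displayed formula in the lemma has a typographical slip, the intended integrand being $\left\<\xi,(-\varphi^{-1}\triangle\varphi)\xi+2H(\log\varphi)\xi\right\>$). The paper itself gives no proof -- it cites Lee's Lemma 7.9 -- and your argument is equivalent to Lee's standard substitution $\xi=\varphi\eta$, since $D\xi-d(\log\varphi)\wedge\xi=\varphi\,D\eta$ and $D^*\xi-d(\log\varphi)\vee\xi=\varphi\,D^*\eta$, so you have in effect reconstructed the cited proof.
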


As in \cite[Lemma 7.10]{Le} and \cite[Lemma 2.2]{LY}, we also have the following result:

\begin{lem}\label{q}
Let $(X^n,g)$ be a complete non-compact Einstein manifold of dimension $n \geq 6$. Then for every
small $\epsilon > 0$ there exists a compact set $K_1(\epsilon)$ such that the following
estimate holds for any smooth section $\xi$ of $\Lambda^2 T^{*2} X$ compactly supported in
$X^n \setminus K_1(\epsilon)$:
\[
(\xi,\widetilde{\triangle}\xi)\geq \left[\frac{(n-5)^2}{4}-C(n, \epsilon)\right]\int_{X^n} |\xi|^2 dV_g.
\]
\end{lem}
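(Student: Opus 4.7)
The plan is to apply Lemma \ref{lmWitten} with a carefully chosen exponential weight $\varphi = e^{-a\rho}$ and optimize in $a>0$. First I would compute
\[-\varphi^{-1}\triangle\varphi = aH - a^2,\]
using $|\nabla\rho| = 1$ and $\triangle\rho = H$, where $H$ is the mean curvature of the level set $\Sigma_\rho$. By Lemma \ref{lmMeanCurvEstimate} we have $H = (n-1) + o(1)$, so outside a sufficiently large compact set $K_1(\epsilon)$,
\[-\varphi^{-1}\triangle\varphi \geq a(n-1) - a^2 - \epsilon.\]

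The bulk of the work lies in controlling the zeroth-order operator
\[H(\log\varphi)\xi = -a\,(\hess\rho)_{\mu\nu}\,e^\mu \wedge (e^\nu \vee \xi).\]
Working in Fermi coordinates, $\hess\rho$ equals the second fundamental form $\sff$ on directions tangent to $\Sigma_\rho$ and vanishes on the normal direction $e^0 = d\rho$. The Riccati equation \eqref{eqRiccati}, combined with \eqref{eqRiemEinstein}, Lemma \ref{lmDecayWeyl}, and Lemma \ref{estfrriccatiequa}, gives $\sff_{ij} = \delta_{ij} + o(1)$ at infinity in an orthonormal frame. I would then invoke the standard degree relation
\[\sum_{\mu=0}^{n-1} e^\mu \wedge (e^\mu \vee \xi) = p\,\xi \qquad \text{for } \xi \in \Lambda^p T^{*2}X,\]
proved exactly as for ordinary $p$-forms, and subtract off the normal contribution to obtain, on $\Lambda^2 T^{*2} X$,
\[H(\log\varphi)\xi = -a\bigl[2\xi - d\rho \wedge (d\rho \vee \xi)\bigr] + o(1)\,\xi.\]
Pairing with $\xi$ and using the adjointness $\<\xi, d\rho\wedge(d\rho\vee\xi)\> = |d\rho\vee\xi|^2 \geq 0$ (with the upper bound $|d\rho\vee\xi|^2 \leq |\xi|^2$ coming from Proposition \ref{propH}(4)) yields
\[\<\xi, 2 H(\log\varphi)\xi\> \geq -4a\,|\xi|^2 - C(n)\epsilon\,|\xi|^2\]
outside a suitable compact set.

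Feeding both estimates into Lemma \ref{lmWitten} gives
\[(\xi,\widetilde{\triangle}\xi) \geq \int_{X^n}\bigl[a(n-1)-a^2-4a-C(n)\epsilon\bigr]|\xi|^2\,dV_g = \int_{X^n}\bigl[a(n-5)-a^2-C(n)\epsilon\bigr]|\xi|^2\,dV_g.\]
Optimizing the quadratic $a \mapsto a(n-5) - a^2$ by choosing $a = (n-5)/2$ produces the spectral constant $(n-5)^2/4$, which is positive precisely when $n \geq 6$; this explains the dimensional hypothesis of the lemma.

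The step I expect to be the main obstacle is the pointwise manipulation of $H(\log\varphi)$ on $\Lambda^2 T^{*2}X$: one must keep track that the correction $d\rho\wedge(d\rho\vee\xi)$ carries the favorable sign after pairing with $\xi$. Retaining this positive contribution is what trades a worst-case coefficient $na$ for the coefficient $4a$ coming from the form-degree $p=2$, and is what ultimately produces the sharp constant $(n-5)^2/4$ rather than a weaker bound of the form $\bigl(\tfrac{n-1}{2} - C\bigr)^2$.
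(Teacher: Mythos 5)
Your proposal is correct and follows essentially the same route as the paper: both apply Lemma \ref{lmWitten} with the weight $e^{-a\rho}$, estimate $-\varphi^{-1}\triangle\varphi$ via $\triangle\rho = H = (n-1)+o(1)$, and control $\langle \xi, 2H(\log\varphi)\xi\rangle$ by noting that $\hess\rho$ is the second fundamental form $\sff \approx \delta$ on tangential directions and vanishes in the normal direction, together with $\sum_{i}|e^i\vee\xi|^2 \leq \sum_\mu |e^\mu\vee\xi|^2 = 2|\xi|^2$ on $\Lambda^2 T^{*2}X$. The only cosmetic difference is that the paper fixes $a = \frac{n-5}{2}$ from the outset rather than optimizing the quadratic $a(n-5)-a^2$ at the end.
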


\begin{proof}
We let $\{e^\mu\}$, $0\leq \mu \leq n$ be a local orthonormal coframe of $g$ such that $e_0 = d\rho$. This implies
that $\{e_i\}$, $1 \leq i \leq n-1$ is tangent to $\Sigma_\rho$. For convenience, we also denote $g = d\rho^2 +
g_{ij}(\rho, \theta) dx^i dx^j$. We denote $\epsilon' \definedas \frac{\epsilon}{3n-11}$. We set

$$\varphi_2(x)= e^{-\frac{n-5}{2}\rho}.$$

Arguing as in the proof of Lemma \ref{lmMeanCurvEstimate}, we get that if $\rho_0$ is large enough,

\begin{equation*}
|\sff - \delta| \leq \epsilon',
\end{equation*}
on $X \setminus \bD_{\rho_0}$. Restricting ourselves to $X \setminus \bD_{\rho_0}$, this implies that 

\begin{eqnarray}
-\varphi_2^{-1}\triangle\varphi_2
  & =    & - \frac{(n-5)^2}{4} + \frac{n-5}{2}\triangle \rho\nonumber\\
  & \geq & - \frac{(n-5)^2}{4} + \frac{(n-5)(n-1)}{2} - (n-1) \epsilon\nonumber\\
  & =    & \frac{(n-5)(n+3)}{4} - (n-1)\epsilon',\label{triangle}
\end{eqnarray}
and
\begin{eqnarray*}
\hessdd{i}{j} \log\varphi_2
  & = &    -\frac{n-5}{2} \hessdd{i}{j}\rho\\
  & \geq & -\frac{n-5}{2} g_{ij} - \epsilon' g_{ij},\\
\hessdd{0}{j} \log\varphi_2
  & = & 0,\\
\hessdd{0}{0} (\log\varphi_2)\\
  & = & 0.
\end{eqnarray*}

From these estimates, we get that, for any $\xi$ which is compactly supported in $X \setminus \bD_{\rho_0}$,

\begin{eqnarray}
\left\<2H(\log\varphi_2)\xi,\xi\right\>
  & =    & \left\<2 \hessdd{\mu}{\nu}(\log\varphi_2) e^\mu \wedge (e^\nu \vee \xi), \xi\right\>\nonumber\\
  & =    & 2 \hessdd{\mu}{\nu}(\log\varphi_2) \left\< e^\mu \vee \xi, e^\nu \vee \xi\right\>\nonumber\\
  & =    & 2 \hessdd{e^i}{e^j}(\log\varphi_2) \left\< e^i \vee \xi, e^j \vee \xi\right\>\nonumber\\
  & \geq & - 2 \left(\frac{n-5}{2} + \frac{n-5}{2} \epsilon' \right)\delta_{ij} \left\< e^i \vee \xi,e^j \vee \xi\right\>\nonumber\\
  & \geq & - (2(n-5)+ (n-5)\epsilon) \left\<\xi,\xi\right\>\label{H}.
\end{eqnarray}

Here we have used the fact that
\[\sum_{i=1}^{n-1}|e^i \vee \xi|^2 \leq \sum_{\mu=0}^{n-1}\left|e^\mu \vee \xi\right|^2 = 2 |\xi|^2,\]
for $\xi\in\Lambda^2 T^2X$. Combining equation \eqref{H} and \eqref{triangle} and Lemma \ref{lmWitten}, we have

\begin{eqnarray*}
\left(\xi,\widetilde{\triangle}\xi\right)
  & \geq & \int_{X^n} \left(\frac{(n-5)(n+3)}{4}-2(n-5)-(3n-11)\epsilon'\right)\left\<\xi,\xi\right\> dV_g\\
  & \geq & \left(\frac{(n-5)^2}{4}- \epsilon\right) \int_{X^n} |\xi|^2 dV_g.
\end{eqnarray*}

This proves the lemma with $K_1(\epsilon) = \bD_{\rho_0}$.
\end{proof}

Note that a (0,4)-tensor $\omega$ such that $\omega(\cdot,\cdot; Y_1, Y_2) = -\omega(\cdot,\cdot; Y_2, Y_1)$
for any $Y_1, Y_2\in T X$,  can be considered as a $\Lambda^2 X$-valued 2-form, i.e.
$\omega \in \Lambda^2(X, \Lambda^2 X)$. In the remaining of this section we will consider such (0,4)-tensors.
The following lemma gives a Weitzenb\"ock formula relating the covariant Laplacian on such tensors
to $\widetilde{\triangle}$:

\begin{lem}\label{f}
For a section $\omega$ of $\Lambda^2 (X, \Lambda^2 X)$,

\begin{eqnarray*}
\widetilde{\triangle}\omega_{\alpha\beta\gamma\delta}
 & = &- \triangle \omega_{\alpha\beta\gamma\delta} + \ricud{\nu}{\gamma} \omega_{\alpha\beta \nu\delta} - \ricud{\nu}{\delta} \omega_{\alpha\beta \nu\gamma}\\
 & & \qquad - \riemdudu{\delta}{\nu}{\gamma}{\mu} \omega_{\alpha\beta \mu\nu} + \riemuddu{\nu}{\alpha}{\gamma}{\mu} \omega_{\nu\beta \mu\delta} + \riemuddu{\nu}{\beta}{\gamma}{\mu} \omega_{\alpha\nu \mu\delta}\\
 & & \qquad - \riemduud{\gamma}{\nu}{\mu}{\delta} \omega_{\alpha\beta \mu\nu} + \riemudud{\nu}{\alpha}{\mu}{\delta} \omega_{\nu\beta \mu\gamma} + \riemudud{\nu}{\beta}{\mu}{\delta} \omega_{\alpha\nu \mu\gamma}.
\end{eqnarray*}
\end{lem}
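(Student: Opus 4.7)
The strategy is to unfold $\widetilde{\triangle}\omega=D^{*}D\omega+DD^{*}\omega$ directly in coordinates, using the formulas for $D$ and $D^{*}$ from Propositions~\ref{D} and~\ref{divergence}, and then to exchange the order of two covariant derivatives so that a rough Laplacian separates off from a pure curvature remainder.

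For $\omega\in\Lambda^{2}(X,\Lambda^{2}X)$, Proposition~\ref{D} gives
\[
(D\omega)_{\alpha\beta;\sigma\gamma\delta}
  = \nabla_{\sigma}\omega_{\alpha\beta\gamma\delta}
  - \nabla_{\gamma}\omega_{\alpha\beta\sigma\delta}
  + \nabla_{\delta}\omega_{\alpha\beta\sigma\gamma},
\]
and Proposition~\ref{divergence} gives $D^{*}=-\divg$. Substituting, one finds
\[
(D^{*}D\omega)_{\alpha\beta;\gamma\delta}
  = -\triangle\omega_{\alpha\beta\gamma\delta}
    + g^{\mu\sigma}\nabla_{\mu}\nabla_{\gamma}\omega_{\alpha\beta\sigma\delta}
    - g^{\mu\sigma}\nabla_{\mu}\nabla_{\delta}\omega_{\alpha\beta\sigma\gamma},
\]
while, taking $D^{*}\omega$ first and then applying $D$,
\[
(DD^{*}\omega)_{\alpha\beta;\gamma\delta}
  = -g^{\mu\sigma}\nabla_{\gamma}\nabla_{\mu}\omega_{\alpha\beta\sigma\delta}
    + g^{\mu\sigma}\nabla_{\delta}\nabla_{\mu}\omega_{\alpha\beta\sigma\gamma}.
\]
Summing these two expressions yields $-\triangle\omega_{\alpha\beta\gamma\delta}$ plus the two commutator contractions
$g^{\mu\sigma}[\nabla_{\mu},\nabla_{\gamma}]\omega_{\alpha\beta\sigma\delta}$ and $-g^{\mu\sigma}[\nabla_{\mu},\nabla_{\delta}]\omega_{\alpha\beta\sigma\gamma}$.

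It then remains to expand each commutator by the Ricci identity on a $(0,4)$-tensor, which produces four Riemann-tensor terms, one for each index slot of $\omega_{\alpha\beta\sigma\delta}$. Contracting with $g^{\mu\sigma}$, the term attached to the $\sigma$-slot collapses via the identity $g^{\mu\sigma}R^{\nu}_{\phantom{\nu}\sigma\mu\gamma}=-\ric^{\nu}_{\phantom{\nu}\gamma}$ (obtained from the antisymmetry $R_{\alpha\beta\gamma\delta}=-R_{\beta\alpha\gamma\delta}$ applied to the non-standard trace), giving the two Ricci terms in the statement with the expected signs. The remaining three contractions in each commutator are rewritten, via the symmetries $R_{abcd}=-R_{abdc}=R_{cdab}$, into the index placements $\riemuddu{\nu}{\alpha}{\gamma}{\mu}$, $\riemuddu{\nu}{\beta}{\gamma}{\mu}$, $\riemdudu{\delta}{\nu}{\gamma}{\mu}$ (from the $\gamma$-commutator) and their counterparts $\riemudud{\nu}{\alpha}{\mu}{\delta}$, $\riemudud{\nu}{\beta}{\mu}{\delta}$, $\riemduud{\gamma}{\nu}{\mu}{\delta}$ (from the $\delta$-commutator), matching exactly the six curvature terms in the conclusion.

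The main obstacle is purely notational: every Riemann contraction produced by the Ricci identity must be massaged by the symmetries of the curvature tensor into the precise index placement prescribed by the statement, and all the sign changes arising from these permutations as well as from the alternating structure of $D$ must be tracked carefully. No further analytic ingredient is needed beyond the Ricci identity and the algebraic symmetries of $\riem$.
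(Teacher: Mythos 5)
Your proposal is correct and follows essentially the same route as the paper's own proof: both expand $D^{*}D\omega$ and $DD^{*}\omega$ in coordinates via Propositions~\ref{D} and~\ref{divergence}, arrive at the same two intermediate expressions, and then apply the Ricci identity to the resulting commutators $g^{\mu\sigma}[\nabla_{\mu},\nabla_{\gamma}]\omega_{\alpha\beta\sigma\delta}$ and $-g^{\mu\sigma}[\nabla_{\mu},\nabla_{\delta}]\omega_{\alpha\beta\sigma\gamma}$ to produce the two Ricci and six Riemann terms. The remaining work you defer (tracking signs through the curvature symmetries) is exactly the bookkeeping the paper also leaves implicit.
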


\begin{proof}
Note that the last two indices of $\omega$ are considered to be the $2$-form indices. By Proposition \ref{divergence}, Proposition \ref{D}
and some direct computations, we get

\begin{eqnarray*}
(D^*D\omega)_{\alpha\beta\gamma\delta}
  & = & - \nabla^\mu(D\omega)_{\alpha\beta\mu\gamma\delta}\\
  & = &-\nabla^\mu \nabla_\mu \omega_{\alpha\beta\gamma\delta} + \nabla^\mu \nabla_\gamma \omega_{\alpha\beta\mu\delta} - \nabla^\mu \nabla_\delta \omega_{\alpha\beta\mu\gamma};\\
(DD^*\omega)_{\alpha\beta\gamma\delta}
  & = & (D^*\omega_{\alpha\beta\delta})_\gamma-(D^*\omega_{\alpha\beta\gamma})_\delta\\
  & = & -\nabla_\gamma\nabla^\mu \omega_{\alpha\beta\mu\delta} + \nabla_\delta \nabla^\mu \omega_{\alpha\beta\mu\gamma}.
\end{eqnarray*}

\noindent Then, applying the Ricci identity,

$$
\nabla_\delta \nabla_\gamma \omega_{\alpha_1\cdots \alpha_4} - \nabla_\gamma \nabla_\delta \omega_{\alpha_1\cdots \alpha_4}
 = \sum_{s=1}^4\omega_{\alpha_1\cdots \alpha_{s-1} \nu \alpha_{s+1} \cdots \alpha_4} \riemuddd{\nu}{\alpha_s}{\gamma}{\delta},$$

\noindent we finally get

\begin{eqnarray*}
\widetilde{\triangle}\omega_{\alpha\beta\gamma\delta}
  & = & (DD^*+D^*D)\omega_{\alpha\beta\gamma\delta}\\
  & = &  -\nabla^\mu \nabla_\mu \omega_{\alpha\beta\gamma\sigma} + \nabla^\mu \nabla_\gamma \omega_{\alpha\beta\mu\sigma} - \nabla_\gamma \nabla^\mu \omega_{\alpha\beta\mu\sigma},^\mu     
      +  \nabla_\sigma \nabla^\mu \omega_{\alpha\beta\mu\gamma} - \nabla^\mu \nabla_\sigma \omega_{\alpha\beta\mu\gamma}\\
  & = &  - \triangle \omega_{\alpha\beta\gamma\sigma} + \ricud{\nu}{\gamma} \omega_{\alpha\beta\nu\sigma} + \riemuddu{\nu}{\delta}{\gamma}{\mu} \omega_{\alpha\beta\mu\nu} - \riemdudu{\alpha}{\nu}{\gamma}{\mu} \omega_{\nu\beta\mu\sigma} - \riemdudu{\beta}{\nu}{\gamma}{\mu} \omega_{\alpha\nu\mu\sigma}\\
  & & \qquad - \ricud{\nu}{\delta} \omega_{\alpha\beta\nu\gamma} + \riemudud{\nu}{\gamma}{\mu}{\delta} \omega_{\alpha\beta\mu\nu} - \riemduud{\alpha}{\nu}{\mu}{\delta} \omega_{\nu\beta\mu\gamma} - \riemduud{\beta}{\nu}{\mu}{\delta} \omega_{\alpha\nu\mu\gamma}.
\end{eqnarray*}
\end{proof}

\begin{defn}\label{defSigma}
We say that a $4$-tensor $\omega$ belongs to $\widetilde{\Sigma}^4$ if it satisfies the following three assumptions:
\begin{enumerate}
 \item $\omega_{\alpha\beta\gamma\delta} = - \omega_{\beta\alpha\gamma\delta}$,
 \item $\omega_{\alpha\beta\gamma\delta} + \omega_{\alpha\gamma\delta\beta} + \omega_{\alpha\delta\beta\gamma} = 0,$
 \item $\omega_{\alpha\beta\gamma\delta} = \omega_{\gamma\delta\alpha\beta}.$
\end{enumerate}
Furthermore, if $\omega$ is trace-free, meaning that $g^{ik}\omega_{ijkl}=0$, we say that $\omega \in \widetilde{\Sigma}_0^4$. 
\end{defn}

Note that any element of $\Sigmatil^4$ belongs to $\Lambda^2(X, \Lambda^2 X)$. Combining Lemmas \ref{q} and \ref{f}, we obtain the following estimate:

\begin{lem}\label{spetrum}
Let $(X^n,g)$ be an $n$-dimensional Einstein manifold containing an essential set $\bD$ with $n > 5$.
Then for every $\epsilon > 0$ there exists a compact set $K_2(\epsilon) \supset \bD$ such that the
following estimate holds for any smooth $4$-tensor $\omega \in \widetilde{\Sigma}_0^4$ compactly supported
in $X^n \setminus K_2(\epsilon)$:
\[
\int_{X^n} \left|\nabla \omega\right|^2 dV_g\geq \left(\frac{(n-1)^2}{4}+4-C(n, \epsilon)\right)\int_{X^n} \left|\omega\right|^2 dV_g.
\]
\end{lem}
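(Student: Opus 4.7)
The strategy is to combine the spectral estimate of Lemma \ref{q} with the Weitzenb\"ock identity of Lemma \ref{f} via integration by parts. First I would observe that a tensor $\omega\in \widetilde{\Sigma}_0^4$ is, by its antisymmetry in the last two indices, a section of $\Lambda^2 T^{*2}X$, so Lemma \ref{q} applies directly. Since $\omega$ has compact support in $X^n\setminus K_2(\epsilon)$ (with $K_2(\epsilon)\supset K_1(\epsilon)$ to be fixed below), integrating by parts gives
\[
\int_{X^n}|\nabla\omega|^2\,dV_g = -\int_{X^n}\langle\omega,\triangle\omega\rangle\,dV_g.
\]
Using Lemma \ref{f} in the form $-\triangle\omega = \widetilde{\triangle}\omega - \mathcal{R}(\omega)$, where $\mathcal{R}(\omega)$ collects the Ricci and Riemann curvature terms appearing in that identity, this becomes
\[
\int_{X^n}|\nabla\omega|^2\,dV_g = (\omega,\widetilde{\triangle}\omega) - (\omega,\mathcal{R}(\omega)).
\]
Lemma \ref{q} then controls the first term from below by $\left[\tfrac{(n-5)^2}{4}-C(n,\epsilon)\right]\int|\omega|^2\,dV_g$, and the whole task reduces to showing
\[
-(\omega,\mathcal{R}(\omega)) \ \geq\ \bigl(2(n-1)-C'(n,\epsilon)\bigr)\int_{X^n}|\omega|^2\,dV_g,
\]
since $\tfrac{(n-5)^2}{4}+2(n-1)=\tfrac{(n-1)^2}{4}+4$.

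To evaluate $(\omega,\mathcal{R}(\omega))$, I would handle the Ricci and Riemann contributions separately. The Ricci terms are immediate from the Einstein condition $\ricud{\nu}{\gamma}=-(n-1)\kronecker{\nu}{\gamma}$ and the antisymmetry $\omega_{\alpha\beta\gamma\delta}=-\omega_{\alpha\beta\delta\gamma}$: pairing $\ricud{\nu}{\gamma}\omega_{\alpha\beta\nu\delta}-\ricud{\nu}{\delta}\omega_{\alpha\beta\nu\gamma}$ against $\omega^{\alpha\beta\gamma\delta}$ yields exactly $-2(n-1)|\omega|^2$, giving the required $+2(n-1)|\omega|^2$ after the sign flip. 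For the six Riemann terms in Lemma \ref{f}, I would decompose
\[
\riemdddd{\alpha}{\beta}{\gamma}{\delta} = \bfK_{\alpha\beta\gamma\delta} + \weyldddd{\alpha}{\beta}{\gamma}{\delta}
\]
as in \eqref{eqRiemEinstein} and treat the two pieces separately.

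The crucial claim, and the technical heart of the argument, is that the $\bfK$-part of the six Riemann terms contributes zero when paired with any $\omega\in\widetilde{\Sigma}_0^4$. This is an algebraic identity following from the explicit form $\bfK_{\alpha\beta\gamma\delta}=-(g_{\alpha\gamma}g_{\beta\delta}-g_{\alpha\delta}g_{\beta\gamma})$ together with the three defining symmetries of $\widetilde{\Sigma}_0^4$: each contraction either produces $g_{\alpha\gamma}g^{\mu\nu}\omega_{\nu\beta\mu\delta}$-type traces, which vanish by the trace-free hypothesis $g^{ik}\omega_{ijkl}=0$, or reindexings such as $\omega_{\gamma\beta\alpha\delta}$, $\omega_{\gamma\alpha\beta\delta}$, etc., whose pairings with $\omega^{\alpha\beta\gamma\delta}$ can be rewritten via pair symmetry, pair antisymmetry, and the first Bianchi identity so that the six contributions cancel in pairs (this is the higher-tensor analog of the well-known cancellation underlying the linearized Weyl tensor equation $\triangle W+2(n-1)W+2\cQ(W)=0$ on a constant-curvature background). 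The remaining Weyl contribution $\sum_i T_i^{\weyl}$ satisfies the pointwise bound $|\langle \omega,\sum T_i^{\weyl}\rangle|\le C_n |\weyl|\,|\omega|^2$, and by Lemma \ref{lmDecayWeyl} we may choose $K_2(\epsilon)\supset K_1(\epsilon)$ so large that $C_n|\weyl|<\epsilon$ on $X^n\setminus K_2(\epsilon)$. Inserting these estimates into the Weitzenb\"ock decomposition yields the desired inequality with $C(n,\epsilon)\to 0$ as $\epsilon\to 0$, and the proof is complete.

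The main obstacle is the purely algebraic cancellation in Step 3: one must carefully verify that the six Riemann-tensor terms from Lemma \ref{f}, after substitution of $\bfK$, regroup into a sum of traces (annihilated by the trace-free condition) plus pair cancellations forced by the Bianchi identity. Everything else, including the integration by parts and the application of Lemmas \ref{q}, \ref{f}, and \ref{lmDecayWeyl}, is essentially bookkeeping once this identity is established.
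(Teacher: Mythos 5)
Your plan reproduces the paper's own argument: integrate by parts, apply the Weitzenb\"ock identity of Lemma \ref{f}, control $(\omega,\widetilde{\triangle}\omega)$ by Lemma \ref{q}, extract $+2(n-1)|\omega|^2$ from the Ricci terms, and show that the $\bfK$-part of the six Riemann terms vanishes (traces killed by $g^{ik}\omega_{ijkl}=0$, the remaining reindexings summing to zero — in two groups of three via the first Bianchi identity rather than "in pairs") while the Weyl part is $O(\epsilon)|\omega|^2$ by Lemma \ref{lmDecayWeyl}, with $\frac{(n-5)^2}{4}+2(n-1)=\frac{(n-1)^2}{4}+4$ closing the estimate. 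This is essentially the same proof as in the paper.
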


\begin{proof}
From Lemma \ref{lmDecayWeyl}, there exists a compact set $K_2(\epsilon) \supset \bD$ such that
$$\|\riem- \bfK \|_{L^\infty(X^n \setminus K_2(\epsilon))} = \left\|\weyl\right\|_{L^\infty(X^n \setminus K_2(\epsilon))}  \leq \epsilon.$$

By a direct computation, we have

\begin{eqnarray*}
\ricud{\nu}{\gamma} \omega_{\alpha\beta\nu\delta}- \ricud{\nu}{\delta} \omega_{\alpha\beta\nu\gamma}
 & = & -2(n-1)\omega_{\alpha\beta\gamma\delta};\\
- \riemdudu{\delta}{\nu}{\gamma}{\mu} \omega_{\alpha\beta\mu\nu} + \riemuddu{\nu}{\alpha}{\gamma}{\mu} \omega_{\nu\beta\mu\delta} + \riemuddu{\nu}{\beta}{\gamma}{\mu} \omega_{\alpha\nu\mu\delta}
 & = & -\omega_{\alpha\beta\delta\gamma}-\omega_{\gamma\beta\alpha\delta}-\omega_{\alpha\gamma\beta\delta} + O(\epsilon \omega)\\
 & = & O(\epsilon \omega);\\
- \riemduud{\gamma}{\nu}{\mu}{\delta} \omega_{\alpha\beta\mu\nu} + \riemudud{\nu}{\alpha}{\mu}{\delta} \omega_{\nu\beta\mu\gamma} + \riemudud{\nu}{\beta}{\mu}{\delta} \omega_{\alpha\nu\mu\gamma}
 & = & \omega_{\alpha\beta\gamma\delta} + \omega_{\delta\beta\alpha\gamma} + \omega_{\alpha\delta\beta\gamma} + O(\epsilon \omega)\\
 & = & O(\epsilon \omega).
\end{eqnarray*}

Using Lemma \ref{f} together with Lemma \ref{q}, we get:

\begin{eqnarray*}
\int_{X^n} \left|\nabla \omega\right|^2 dV_g
 &  =  & (\omega, -\triangle \omega)\\
 &  =  & (\omega, \widetilde{\triangle}\omega_{\alpha\beta\gamma\delta} - \ricud{\nu}{\gamma} \omega_{\alpha\beta\nu\delta} + \ricud{\nu}{\delta} \omega_{\alpha\beta\nu\gamma}
    + \riemdudu{\delta}{\nu}{\gamma}{\mu} \omega_{\alpha\beta\mu\nu} - \riemuddu{\nu}{\alpha}{\gamma}{\mu} \omega_{\nu\beta\mu\delta}\\
 &  & \qquad - \riemuddu{\nu}{\beta}{\gamma}{\mu} \omega_{\alpha\nu\mu\delta} + \riemduud{\gamma}{\nu}{\mu}{\delta} \omega_{\alpha\beta\mu\nu} - \riemudud{\nu}{\alpha}{\mu}{\delta} \omega_{\nu\beta\mu\gamma}
    - \riemudud{\nu}{\beta}{\mu}{\delta} \omega_{\alpha\nu\mu\gamma})\\
 & \geq & \frac{(n-5)^2}{4}\int_{X^n} \left|\omega\right|^2 dV_g + 2(n-1) \int_{X^n}\left|\omega\right|^2 dV_g - C(n, \epsilon) \int_{X^n}\left|\omega\right|^2 dV_g\\
 & \geq & \left(\frac{(n-1)^2}{4}+4-C(n, \epsilon)\right)\int_{X^n} \left|\omega\right|^2 dV_g.
\end{eqnarray*}
\end{proof}

\begin{rem}
By a density argument, it is not difficult to see that Lemmas \ref{lmChengYau} and \ref{spetrum}
are still true if we replace the condition that $u$ or $\omega$ has compact support by
$u \in W_0^{1, 2}(X^n\setminus K_1)$ (resp. $\omega \in W_0^{1, 2}(X^n\setminus K_2)$). Here the
subscript $0$ means that $u$ (resp. $\omega$) has vanishing trace on $\partial K_1$ (resp. $\partial K_2$).
\end{rem}

Our next goal is to make use of the above estimates to get weighted $L^2$-estimate for the Weyl
tensor. More precisely, we have:

\begin{prop}\label{lp}
Suppose that $(X^n,g)$, $n \geq 4$, is a complete noncompact Einstein manifold  with an essential set $\bD$. If
$\|\weyl\|_{L^p(X^n,g)} < \infty$, with $1 < p < \frac{n-1}{2}$, then
$$\| \weyl\|_{L^2(X^n,g)} < \infty.$$
Furthermore if $n \geq 6$, we have $\| e^{\frac{a}{2}\rho}\weyl\|_{W^{1,2}(X^n,g)} <\infty$ for any $a \in \left[0; n-5\right)$.
\end{prop}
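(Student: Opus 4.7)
The proof plan is to test the Weyl equation~\eqref{equationofweyltensor} against suitable cutoff multiples of $\weyl$ and to invoke the spectral estimate of Lemma~\ref{spetrum}. The Weyl tensor of an Einstein manifold belongs pointwise to $\widetilde{\Sigma}_0^4$, so the lemma provides a spectral gap of $\frac{(n-5)^2}{4}$ above the coefficient $2(n-1)$ appearing in the linear part of~\eqref{equationofweyltensor}. Since $|\weyl|$ is uniformly small in the far region by Lemma~\ref{lmDecayWeyl}, the nonlinear term $\langle \weyl, \cQ(\weyl)\rangle = O(|\weyl|^3)$ will be absorbable.

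For the first conclusion, the range $1 < p \leq 2$ (which covers $n \in \{4,5\}$) is immediate: the Einstein equation combined with Lemma~\ref{lmInjRadius} gives $\weyl \in L^\infty$, hence $|\weyl|^2 \leq \|\weyl\|_\infty^{2-p}|\weyl|^p \in L^1$. For $2 < p < \frac{n-1}{2}$ (which forces $n \geq 6$), I will test~\eqref{equationofweyltensor} against $\chi^2 \weyl$, with $\chi$ compactly supported in $X \setminus \bD_{R_0}$ and $R_0$ large enough that $|\weyl| \leq \mu$ on $\supp\chi$. Integration by parts yields
\[
\int |\nabla(\chi \weyl)|^2 = \int |\nabla \chi|^2 |\weyl|^2 + 2(n-1)\int\chi^2|\weyl|^2 + 2\int \chi^2 \langle \weyl, \cQ(\weyl)\rangle,
\]
and applying Lemma~\ref{spetrum} to $\chi\weyl \in \widetilde{\Sigma}_0^4$ and absorbing $|\langle \weyl, \cQ(\weyl)\rangle| \leq C\mu|\weyl|^2$ gives
\[
\Big(\tfrac{(n-5)^2}{4} - O(\epsilon+\mu)\Big) \int \chi^2 |\weyl|^2 \leq \int |\nabla \chi|^2 |\weyl|^2.
\]
A sequence $\chi_R$ equal to $1$ on $\bD_R \setminus \bD_{R_0+1}$, with transition widths tuned to the exponential volume growth of $X$ and to the global $L^p$ control on $\weyl$, will then deliver $\weyl \in L^2(X)$.

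For the weighted estimate with $a \in [0, n-5)$, I will apply the same argument to $\xi = \chi e^{a\rho/2}\weyl \in \widetilde{\Sigma}_0^4$. The weight contributes
\[
|\nabla(\chi e^{a\rho/2})|^2 = e^{a\rho}\Big|\nabla\chi + \tfrac{a}{2}\chi\nabla\rho\Big|^2 \leq e^{a\rho}\Big[(1+\delta)|\nabla\chi|^2 + \big(1+\tfrac{1}{\delta}\big)\tfrac{a^2}{4}\chi^2\Big]
\]
by Young's inequality, so the energy identity combined with Lemma~\ref{spetrum} reads
\[
\Big(\tfrac{(n-5)^2-(1+1/\delta) a^2}{4} - O(\epsilon+\mu)\Big) \int \chi^2 e^{a\rho}|\weyl|^2 \leq (1+\delta)\int e^{a\rho}|\nabla\chi|^2 |\weyl|^2.
\]
For any $a < n-5$, selecting $\delta > \tfrac{a^2}{(n-5)^2 - a^2}$ and $\mu, \epsilon$ small makes the left coefficient positive, and the same cutoff argument yields $\int_X e^{a\rho}|\weyl|^2 < \infty$. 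The bound on $\int_X e^{a\rho}|\nabla\weyl|^2$ then follows from the elementary inequality $|A+B|^2 \geq \tfrac{1}{2}|A|^2 - |B|^2$ applied to $A = \chi e^{a\rho/2}\nabla\weyl$ and $B = \weyl\,\nabla(\chi e^{a\rho/2})$ in $|\nabla\xi|^2$, combined with the already-established weighted $L^2$-bound on $\weyl$.

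The main technical obstacle I anticipate is the cutoff argument for the $p > 2$ case of the $L^2$ statement: the inequality $c \int \chi^2|\weyl|^2 \leq \int|\nabla\chi|^2|\weyl|^2$ does not close trivially, because the volumes of the transition annuli grow like $e^{(n-1)R}$ and, unlike in the $p \leq 2$ case, interpolation with the $L^\infty$-bound does not immediately control $\int_A |\weyl|^2$ on such annuli. The delicate point is to exploit Lemma~\ref{lmDecayWeyl} (which makes $\mu = \|\weyl\|_{L^\infty(\supp\chi)}$ arbitrarily small for $R_0$ large) together with the global $L^p$ integrability so that the transition term on the right remains uniformly bounded as $\chi$ increases to the indicator of $X \setminus \bD_{R_0+1}$.
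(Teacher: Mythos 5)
Your overall strategy --- testing Equation \eqref{equationofweyltensor} against cutoff multiples of $\weyl$, invoking the spectral estimate of Lemma \ref{spetrum}, and absorbing the quadratic term via the smallness of $|\weyl|$ from Lemma \ref{lmDecayWeyl} --- is exactly the paper's, and your treatment of the range $p\le 2$ and the algebra of the energy identity are correct. But the step you defer to the end (``transition widths tuned to the exponential volume growth and to the global $L^p$ control'') is not a technical detail to be filled in later: it is the heart of the proof, and as sketched it does not close. With a compactly supported $\chi_R$ transitioning over an annulus $A_R$ of width $L$, H\"older gives $\int_{A_R}|\weyl|^2 \le \|\weyl\|_{L^p(X\setminus \bD_R)}^2\,|A_R|^{1-2/p}$, and $|A_R|$ grows like $e^{(n-1)(R+L)}$ while you have no rate of decay for $\|\weyl\|_{L^p(X\setminus\bD_R)}$; no choice of $L$ keeps $\int|\nabla\chi_R|^2|\weyl|^2$ bounded as $R\to\infty$. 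A telling symptom is that your argument for $p>2$ never actually uses the hypothesis $p<\frac{n-1}{2}$.

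The paper closes this step with two ingredients absent from your proposal. First, a preliminary weighted estimate: H\"older against the volume growth $|\Sigma_\rho|=|\Sigma_0|e^{(n-1)\rho+o(\rho)}$ (a consequence of Lemma \ref{lmMeanCurvEstimate}) yields $\int_X e^{-2b\rho}|\weyl|^2\,dV_g<\infty$ for every $b>\frac{n-1}{2}-\frac{n-1}{p}$. Second, instead of compactly supported cutoffs it uses test functions $f_R$ equal to $e^{a\rho}$ for $\rho\le R$ and to $e^{aR-b(\rho-R)}$ for $\rho\ge R$, with $b$ chosen in the window $\left(\frac{n-1}{2}-\frac{n-1}{p},\,\frac{n-5}{2}\right)$, which is nonempty precisely because $p<\frac{n-1}{2}$. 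The lower bound on $b$ makes $f_R^2|\weyl|^2$ integrable, so a density argument extends the energy inequality to these non-compactly-supported $f_R$; the upper bound makes the coefficient $\frac{(n-5)^2}{4}-b^2-C(n,\epsilon)$ of the integral over $X\setminus\bD_R$ nonnegative, so that term is simply discarded rather than estimated. What survives is $\left[\frac{(n-5)^2}{4}-a^2-C(n,\epsilon)\right]\int_{\bD_R}e^{2a\rho}|\Wtil|^2\,dV_g\le-\int_X e^{2a\rho}\langle\Wtil,\theta\rangle\,dV_g$, whose right-hand side is a fixed finite number because the error $\theta$ produced by the inner cutoff is supported in a fixed compact annulus; letting $R\to\infty$ gives both the $L^2$ and the weighted conclusions at once. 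You need this two-sided choice of decay rate (or an equivalent device) to turn your plan into a proof.
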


Before giving the proof of this proposition, we need to make a preliminary definition. Formula \eqref{eqDefQ}
together with Equation \eqref{eqDefB} define a quadratic map from $\widetilde{\Sigma}^4$ to itself.
We define the associated symmetric bilinear map as follows:

\[
\cQ(\xi, \omega)_{\alpha\beta\gamma\delta}
  \definedas \tudud{\xi}{\mu}{\alpha}{\nu}{\beta} \omega_{\mu\gamma\nu\delta} + \tudud{\xi}{\mu}{\alpha}{\nu}{\gamma} \omega_{\mu\beta\nu\delta}
   - \tudud{\xi}{\mu}{\beta}{\nu}{\alpha} \omega_{\mu\gamma\nu\delta} - \tudud{\xi}{\mu}{\beta}{\nu}{\delta} \omega_{\mu\alpha\nu\gamma}.
\]

This map enjoys the following nice property:

\begin{claim}\label{clSymmetry}
For every $\omega,~\xi\in\widetilde{\Sigma}^4$, we have
\[\langle \cQ(\omega, \weyl), \xi\rangle = \langle \omega, \cQ(\xi, \weyl)\rangle.\]
Equivalently, the map $\omega \mapsto \cQ(\weyl, \omega)$ is symmetric.
\end{claim}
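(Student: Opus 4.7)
The plan is to reduce $\langle \cQ(\omega, \weyl), \xi \rangle$ to a manifestly symmetric form via a sequence of algebraic manipulations that exploit the full structure of $\widetilde{\Sigma}^4$. The ultimate goal is to prove that
\[
\langle \cQ(\omega, \weyl), \xi \rangle = \frac{1}{2}\, \omega^{\mu\nu}{}_{\alpha\beta}\, \weyl_{\mu\nu\gamma\delta}\, \xi^{\alpha\beta\gamma\delta},
\]
a ``clean'' trilinear contraction in which the four indices of each tensor are paired according to the two natural $\Lambda^2 T_p X$-slots (positions $(1,2)$ and $(3,4)$). Once this is in hand, identifying $\omega, \weyl, \xi$ with self-adjoint endomorphisms of $\Lambda^2 T_p X$ via the pair-swap symmetry $A_{\alpha\beta\gamma\delta} = A_{\gamma\delta\alpha\beta}$, the right-hand side becomes (up to a constant) the trace of the composition of the three operators, and the claimed symmetry $\langle \cQ(\omega, \weyl), \xi\rangle = \langle \omega, \cQ(\xi, \weyl)\rangle$ follows from the classical identity $\tr(ABC) = \tr(CBA)$ for products of three self-adjoint operators, obtained by transposing the product.

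To reach the clean form I would proceed in two stages. First, I would expand the four-term definition of $\cQ(\omega, \weyl)_{\alpha\beta\gamma\delta}$ and contract against $\xi^{\alpha\beta\gamma\delta}$. Using the antisymmetries of $\xi$ in its first and last pair of indices, together with relabeling of summation indices, the third term of $\cQ$ coincides with the first (the minus sign in front of it is absorbed when applying the antisymmetry of $\xi$), while the second and fourth terms cancel. This yields
\[
\langle \cQ(\omega, \weyl), \xi \rangle = 2\, \omega^{\mu~\nu}_{~\alpha~\beta}\, \weyl_{\mu\gamma\nu\delta}\, \xi^{\alpha\beta\gamma\delta}.
\]
Second, antisymmetrizing the positions $\alpha, \beta$ of $\omega$ (using $\xi$'s antisymmetry in its first pair) and invoking the first Bianchi identity for $\omega \in \widetilde{\Sigma}^4$, I would derive the key algebraic relation
\[
\omega^{\mu~\nu}_{~\alpha~\beta} - \omega^{\mu~\nu}_{~\beta~\alpha} = \omega^{\mu\nu}{}_{\alpha\beta}.
\]
Performing the analogous Bianchi-plus-antisymmetry maneuver on $\weyl$ (this time using $\xi$'s antisymmetry in its second pair) then produces the desired clean form.

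I expect the main obstacle to be the derivation of the identity $\omega^{\mu~\nu}_{~\alpha~\beta} - \omega^{\mu~\nu}_{~\beta~\alpha} = \omega^{\mu\nu}{}_{\alpha\beta}$: it requires writing the first Bianchi identity twice, once for $\omega_{\mu\alpha\nu\beta}$ and once for $\omega_{\mu\beta\nu\alpha}$, subtracting them, and iteratively applying the antisymmetries in the last two indices to solve for the antisymmetrized combination on the left. Once this identity and its $\weyl$-analog are established, the reduction to the clean form and the conclusion via the symmetric-operator trace identity are both short computations; alternatively, once the clean form is in hand, one may verify the $\omega \leftrightarrow \xi$ symmetry by a direct index manipulation using successive pair-swaps, which is algebraically equivalent to the trace argument but avoids any reference to operators on $\Lambda^2 T_pX$.
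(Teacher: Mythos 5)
Your proposal is correct, and it follows the same basic strategy as the paper's proof---expand the four terms of $\cQ(\omega,\weyl)_{\alpha\beta\gamma\delta}\xi^{\alpha\beta\gamma\delta}$, use the antisymmetries of $\xi$ to reduce their number, then invoke the first Bianchi identity to reach a manifestly symmetric expression---but the execution differs in two ways worth recording. First, your bookkeeping of the four terms is the correct one: writing $T_1,\dots,T_4$ for the four contractions in the order of the definition, one has $T_3=T_1$ (relabel $\alpha\leftrightarrow\beta$ and use the antisymmetry of $\xi$ in its first pair) and $T_4=-T_2$ (relabel $\alpha\leftrightarrow\beta$, $\gamma\leftrightarrow\delta$ and use both antisymmetries of $\xi$), so the sum is $2T_1$; the paper's second displayed line asserts $2(T_1+T_2)$, which appears to be a slip, and since $T_2$ does not vanish in general (it is already nonzero for $\omega=\weyl=\xi=\bfK$) the paper's subsequent lines, though each internally consistent, manipulate the wrong quantity---the claim itself remains true, as your argument shows. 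Second, where the paper applies the first Bianchi identity to $\xi$ and stops at a pair of terms that are symmetric under the dummy-index swap $\gamma\leftrightarrow\mu$, $\delta\leftrightarrow\nu$, you apply it to $\omega$ and $\weyl$ and push all the way to $\frac12\,\omega^{\mu\nu}{}_{\alpha\beta}\weyl_{\mu\nu\gamma\delta}\xi^{\alpha\beta\gamma\delta}$. I have checked this identity and it is correct; note that your ``key relation'' $\omega^{\mu}{}_{\alpha}{}^{\nu}{}_{\beta}-\omega^{\mu}{}_{\beta}{}^{\nu}{}_{\alpha}=\omega^{\mu\nu}{}_{\alpha\beta}$ is not the obstacle you anticipate but a one-line consequence of a single application of the first Bianchi identity together with the antisymmetry in the last two slots. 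What your endpoint buys is the cleanest possible conclusion: the pair-exchange symmetry identifies $\omega,\weyl,\xi$ with self-adjoint endomorphisms of $\Lambda^2 T_pX$, and the symmetry in $\omega\leftrightarrow\xi$ becomes literally $\tr(ABC)=\tr(CBA)$. So your route both repairs the computation and packages the result more conceptually.
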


\begin{proof} The proof is a straightforward calculation:
\begin{eqnarray*}
\langle \cQ(\omega, \weyl), \xi\rangle
  & = & \left(\tudud{\omega}{\mu}{\alpha}{\nu}{\beta} \weyl_{\mu\gamma\nu\delta} + \tudud{\omega}{\mu}{\alpha}{\nu}{\gamma} \weyl_{\mu\beta\nu\delta}
     - \tudud{\omega}{\mu}{\beta}{\nu}{\alpha} \weyl_{\mu\gamma\nu\delta} - \tudud{\omega}{\mu}{\beta}{\nu}{\delta} \weyl_{\mu\alpha\nu\gamma}\right) \xi^{\alpha\beta\gamma\delta}\\
  & = & 2 \left(\tudud{\omega}{\mu}{\alpha}{\nu}{\beta} \weyl_{\mu\gamma\nu\delta} + \tudud{\omega}{\mu}{\alpha}{\nu}{\gamma} \weyl_{\mu\beta\nu\delta}\right) \xi^{\alpha\beta\gamma\delta}\\
  & = & 2 \tudud{\omega}{\mu}{\alpha}{\nu}{\beta} \weyl_{\mu\gamma\nu\delta} \left(\xi^{\alpha\beta\gamma\delta} + \xi^{\alpha\gamma\beta\delta}\right)\\
  & = & -2 \tudud{\omega}{\mu}{\alpha}{\nu}{\beta} \weyl_{\mu\gamma\nu\delta} \left(2 \xi^{\alpha\gamma\delta\beta} + \xi^{\alpha\delta\beta\gamma}\right)\\
  & = & 4 \omega_{\alpha\mu\beta\nu} \xi^{\alpha\gamma\beta\delta} \weyludud{\mu}{\gamma}{\nu}{\delta} - 2 \omega_{\alpha\mu\beta\nu} \xi^{\alpha\delta\beta\gamma} \weyludud{\mu}{\gamma}{\nu}{\delta}.
\end{eqnarray*}
where we used the first Bianchi identity, Property 2 of Definition \ref{defSigma}, to get the fourth line. Under this form,
the claim becomes clear by swapping $\gamma$ (resp. $\delta$) and $\mu$ (resp. $\nu$).
\end{proof}

Having made this definition, we can give a proof of Proposition \ref{lp}:

\begin{proof}[Proof of Proposition \ref{lp}]
We remark that if $n < 6$, $p < \frac{n-1}{2} \leq 2$. Hence, from the fact that $\weyl \in L^\infty$,
which was proven in Lemma \ref{lmDecayWeyl}, we conclude that $\weyl \in L^2$. As a consequence, we
now restrict our attention to the case $n \geq 6$. We also assume that $p > 2$. For an arbitrary
$b \in \bR$ and using H\"older inequality, we get
\begin{equation}\label{eqWeightedL2}
\int_{X^n} e^{-2b\rho} \left|\weyl\right|^2_g dV_g
  \leq \left(\int_{X^n} \left|\weyl\right|^p_g dV_g\right)^{\frac{2}{p}} \left(\int_{X^n} e^{-\frac{2bp}{p-2}\rho} dV_g\right)^{\frac{p-2}{p}}.
\end{equation}
Note that the second integral appearing in the right-hand side can be rewritten as follows:

\begin{equation}\label{eqWeightedL2b}
\left(\int_{X^n} e^{-\frac{2bp}{p-2}\rho} dV_g\right)^{\frac{2}{p}} = \vol(\bD) + \int_0^\infty e^{-\frac{2bp}{p-2}\rho} \left|\Sigma_\rho\right| d\rho,
\end{equation}
where $\left|\Sigma_\rho\right|$ denotes the area of $\Sigma_\rho$. Using Lemma \ref{lmMeanCurvEstimate}, we get:

\[
 \diff{~}{\rho} \left|\Sigma_\rho\right| = \int_{\Sigma_\rho} H dV_g = (n-1 + o(1)) \left|\Sigma_\rho\right|.
\]
Integrating this differential estimate, we obtain

\[
 \left|\Sigma_\rho\right| = \left|\Sigma_0\right| e^{(n-1)\rho + o(\rho)}.
\]
In particular, Integral \eqref{eqWeightedL2b} converges if and only if $n-1-\frac{2bp}{p-2} < 0$,
that is to say $b > \frac{n-1}{2} - \frac{n-1}{p}$. From Equation \eqref{eqWeightedL2}, we conclude that
\begin{equation}\label{eqWeightedL2p}
 \int_{X^n} e^{-2b\rho} \left|\weyl\right|^2_g dV_g < \infty
\end{equation}
for any $b > \frac{n-1}{2} - \frac{n-1}{p}$.

We now select $\epsilon > 0$ to be fixed later and a cutoff function $\chi$ which vanishes on $K_2 = K_2(\epsilon)$
and which equals one outside a larger compact subset $K_2' \supset K_2(\epsilon)$. We set $\Wtil \definedas \chi W$.
We remark that $\Wtil$ satisfies the following equation:

\begin{equation}\label{eqWtil}
 \triangle \Wtil + 2(n-1) \Wtil + 2\cQ(\weyl, \Wtil) = \theta,
\end{equation}
where $\theta$ is a tensor belonging to $\Sigmatil^4_0$ and whose support is contained in $\mathrm{supp}(\nabla \chi) \subset K_2' \setminus K_2$.

For any compactly supported Lipschitz function $f$, we have
\[
\begin{aligned}
\int_X \left|\nabla (f \Wtil)\right|^2 dV_g
  & = \int_X f^2 \left|\nabla \Wtil\right|^2 dV_g + 2 \int_X f \left\<\nabla f \otimes \Wtil, \nabla \Wtil\right\> dV_g + \int_X \left|\nabla f\right|^2 \left|\Wtil\right|^2 dV_g\\
  & = \int_X \left\<\nabla (f^2 \Wtil), \Wtil\right\> dV_g + \int_X \left|\nabla f\right|^2 \left|\Wtil\right|^2 dV_g\\
  & = - \int_X f^2 \left\<\Wtil, \triangle \Wtil\right\> dV_g + \int_X \left|\nabla f\right|^2 \left|\Wtil\right|^2 dV_g\\
  & = 2(n-1) \int_X f^2 \left|\Wtil\right|^2 dV_g + 2 \int_X f^2 \left\<\Wtil, \cQ(\weyl, \Wtil)\right\> dV_g\\
  & \qquad - \int_X f^2 \left\<\Wtil, \theta\right\> dV_g + \int_X \left|\nabla f\right|^2 \left|\Wtil\right|^2 dV_g.
\end{aligned}
\]

Since $f \Wtil$ is compactly supported in $X \setminus K_2$, we conclude from Lemma \ref{spetrum} that
\[
\begin{aligned}
\left[\frac{(n-1)^2}{4} + 4 - \epsilon\right] \int_X f^2 \left|\Wtil\right|^2 dV_g
  & \leq 2(n-1) \int_X f^2 \left|\Wtil\right|^2 dV_g + 2 \int_X f^2 \left\<\Wtil, \cQ(\weyl, \Wtil)\right\> dV_g\\
  & \qquad - \int_X f^2 \left\<\Wtil, \theta\right\> dV_g + \int_X \left|\nabla f\right|^2 \left|\Wtil\right|^2 dV_g.
\end{aligned}
\]

From the fact that $\left|\weyl\right| < \epsilon$ on $X^n \setminus K_2$, we conclude that

\begin{equation}\label{eqEstimateWeyl}
\left[\frac{(n-5)^2}{4} - C(n, \epsilon)\right] \int_X f^2 \left|\Wtil\right|^2 dV_g
  \leq - \int_X f^2 \left\<\Wtil, \theta\right\> dV_g + \int_X \left|\nabla f\right|^2 \left|\Wtil\right|^2 dV_g.
\end{equation}

By a simple density argument using Inequality \eqref{eqWeightedL2p}, it can be shown that Estimate \eqref{eqEstimateWeyl}
still holds for any function $f$ such that $f, |\nabla f| = O (e^{-b \rho})$ for some $b > \frac{n-1}{2} - \frac{n-1}{p}$.
We choose $f = f_R(\rho)$ where $f_R$ is a 1-parameter family of functions defined as follows:
\[
f_R(\rho) \definedas
 \left\lbrace\begin{array}{ll}
   e^{a\rho}\qquad & \text{if } \rho \leq R,\\
   e^{aR - b(\rho-R)} & \text{if } \rho \geq R.\end{array}\right.
\]

It is easy to see that these functions are Lipschitz continuous and satisfy $f, |\nabla f| = O (e^{-b \rho})$.
From the fact that $|\nabla f|^2 = a^2 f^2$ if $\rho < R$ and $|\nabla f|^2 = b^2 f^2$ if $\rho > R$, we finally get:

\begin{equation}\label{eqEstimateWeyl2}
\begin{split}
\left[\frac{(n-5)^2}{4} - a^2 - C(n, \epsilon)\right] \int_{\bD_R} f^2 \left|\Wtil\right|^2 dV_g
+ \left[\frac{(n-5)^2}{4} - b^2 - C(n, \epsilon)\right] \int_{X \setminus \bD_R} f^2 \left|\Wtil\right|^2 dV_g\\
  \leq - \int_X f^2 \left\<\Wtil, \theta\right\> dV_g.
\end{split}
\end{equation}

Choosing $b < \frac{n-5}{2}$, which is possible since $\frac{n-1}{2} - \frac{n-1}{p} < \frac{n-5}{2}$, and $\epsilon$
so small that $\frac{(n-5)^2}{4} - b^2 - C(n, \epsilon) \geq 0$, we finally get

\begin{equation}\label{eqEstimateWeyl3}
\left[\frac{(n-5)^2}{4} - a^2 - C(n, \epsilon)\right] \int_{\bD_R} e^{2 a \rho} \left|\Wtil\right|^2 dV_g
\leq - \int_X e^{2a \rho} \left\<\Wtil, \theta\right\> dV_g.
\end{equation}

Letting $R$ tend to infinity, and upon reducing the value of $\epsilon$ so that $\frac{(n-5)^2}{4} - a^2 - C(n, \epsilon) > 0$,
we finally get
\[
 \int_{\bD_R} e^{2 a \rho} \left|\Wtil\right|^2 dV_g < \infty.
\]
This ends the proof of Proposition \ref{lp}.
\end{proof}

\section{Pointwise estimate for the Weyl tensor}
\label{secEstimates2}
In this section, we assume that $(X^n, g)$ is an AHE manifold with Weyl tensor satisfying $\|\weyl\|_{L^p (X,g)}<\infty$
for some $p\leq \frac{n-1}2$. Note that on Einstein manifolds we always have $\weyl= \riem -\bfK$. The main purpose of
this section is to give a pointwise decay estimate for $\weyl$.
We achieve this by two steps: first, we get the estimate by assuming $(X^n,g)$ is a $C^{2,\mu}$-conformally compact
Einstein manifold. Obviously, even in this case the result has its own interests. Later, we remove the condition of
$C^{2,\mu}$-regularity and try to obtain the pointwise estimate of $|\weyl|$ in more general situations. Unfortunately,
due to some technical reasons mentioned in the introduction, we have to assume $n \geq 7$ in this case.

We begin by recaling the definition of a conformally compact manifold:

\begin{defn}
 We say that $(X, g)$ is a $C^{k, \alpha}$-conformally compact manifold if
\begin{itemize}
 \item there exists a smooth manifold $\overline{X}$ with boundary $\partial X$ whose interior is $X$:
$X = \overline{X} \setminus \partial X$
 \item and for some defining function $x$, $\gbar = x^2 g$ extends to a $C^{k, \alpha}$ metric
on $\overline{X}$,
\end{itemize}
where a defining function $x$ is a smooth function $x : \overline{X} \to bR_+$ such that $x^{-1}(0) = \partial X$ with
$dx \neq 0$ at every point of $\partial X$.
\end{defn}
Furthermore, assuming that $\sec_g \to -1$ at infinity, the function $x$ satisfies $|dx|^2_{\gbar} \equiv 1$ on $\partial X$.
$C^{k, \alpha}$-conformally compact manifolds whose curvature tends to $-1$ at infinity are called \emph{asymptotically hyperbolic}.
We refer the reader to \cite{Le} and references therein for more details on these manifolds.

In order to get the pointwise decay of $\weyl$ which is mentioned above, we need the following lemma, which was observed in \cite{W}.

\begin{lem}\label{lmWang}
Suppose that $(X^n,g)$ is a conformally compact Einstein manifold of regularity $C^2$. If its conformal infinity is conformally flat, then
$$|\weyl|=O(r^{n+1})$$
where $r$ is the defining function determined by some conformal infinity.
\end{lem}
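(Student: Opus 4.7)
The plan is to read off the decay of $\weyl$ from the Fefferman--Graham asymptotic expansion of the Einstein metric near conformal infinity, using the conformal flatness of the boundary to annihilate all the intermediate terms of that expansion.

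Since the conformal infinity is conformally flat, select locally a flat representative $\hat h$ of the conformal class on $\partial X$, and take the associated geodesic defining function $r$ so that in a collar neighbourhood of $\partial X$ the compactified metric takes the normal form $\gbar \definedas r^2 g = dr^2 + h_r$ with $h_0 = \hat h$. The Einstein equation $\ric_g = -(n-1) g$ is then equivalent to a recursive system on the Taylor coefficients of $h_r$ in $r$. By the Fefferman--Graham theory, the coefficients $h_{(2)}, h_{(4)}, \ldots$ below the critical order are local polynomial expressions in $\hat h$ and its curvature, and the (even-dimensional) obstruction tensor is likewise a conformal curvature invariant of $\hat h$. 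Since $\hat h$ is flat, all these expressions vanish, so $h_r - \hat h$ is a high-order perturbation and $\gbar$ agrees with the flat product metric $dr^2 + \hat h$, whose Weyl tensor vanishes identically, to high order.

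Then compute $\weyl_{\gbar}$ from this perturbation. The leading perturbation enters the Riemann tensor of $\gbar$ through the radial derivatives $\partial_r^2 h_r$; being trace-free and divergence-free with respect to $\hat h$ (constraints forced on the Fefferman--Graham free datum by the Einstein equation), these contributions appear at leading order in the Ricci tensor and drop out of the trace-free Weyl part after the standard Weyl decomposition. Tracking these cancellations shows that $|\weyl_{\gbar}|_{\gbar}$ vanishes to order $r^{n-1}$ at $r = 0$. Since the conformal transformation law $g = r^{-2} \gbar$ yields $|\weyl_g|_g = r^2 |\weyl_{\gbar}|_{\gbar}$ for the pointwise norms, this gives $|\weyl_g|_g = O(r^{n+1})$.

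The main obstacle is the careful bookkeeping at the critical order of the Fefferman--Graham expansion: one must verify that the trace-free and divergence-free conditions on the leading nontrivial coefficient of the expansion, combined with the trace-removal structure that defines the Weyl tensor, conspire to push the vanishing order of $\weyl_{\gbar}$ two full powers beyond what a naive order-counting from $\partial_r^2 h_r$ would suggest.
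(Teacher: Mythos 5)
Your overall strategy --- reading the decay of $\weyl$ off the Fefferman--Graham expansion after using conformal flatness of the boundary to annihilate every locally determined coefficient --- is the same route the paper takes (the paper gives only an outline, deferring the key computation to \cite{W}, \cite{FG} and \cite{CDLS}). The gap sits exactly at the step you yourself flag as the main obstacle, and that step fails: the trace-free and divergence-free conditions on the free coefficient $g_{(n-1)}$ do \emph{not} make its contribution drop out of the Weyl tensor. Write $\gbar = dr^2 + h_r$ with flat representative $\hat h$ and $h_r = \hat h + t\,r^{n-1} + \cdots$, where $t = g_{(n-1)}$ is trace-free and divergence-free for $\hat h$. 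Linearizing the curvature of $\gbar$ about the flat metric $dr^2 + \hat h$ gives at leading order
\[
\dot{\riem}_{0i0j} = -\tfrac{1}{2}\,\partial_r^2 (h_r)_{ij} + O(r^{n-2}) = -\tfrac{(n-1)(n-2)}{2}\, t_{ij}\, r^{n-3} + O(r^{n-2}),
\qquad
\dot{\ric}_{ij} = \dot{\riem}_{0i0j} + O(r^{n-1}),
\]
while tracelessness of $t$ gives $\dot{\ric}_{00},\ \dot{\scal} = O(r^{n-2})$ and divergence-freeness only improves the components $\riem_{ijk0}$, $\ric_{0j}$, which are already $O(r^{n-2})$. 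Subtracting the Schouten part therefore removes only a $\tfrac{1}{n-2}$ fraction of the normal curvature:
\[
\dot{\weyl}_{0i0j} = \dot{\riem}_{0i0j} - \tfrac{1}{n-2}\,\dot{\ric}_{ij} + O(r^{n-2})
 = -\tfrac{(n-1)(n-3)}{2}\, t_{ij}\, r^{n-3} + O(r^{n-2}),
\]
which is nonzero whenever $t \neq 0$ and $n \geq 4$. So the argument as you have written it yields only $\left|\weyl_{\gbar}\right|_{\gbar} = O(r^{n-3})$, hence $\left|\weyl_g\right|_g = O(r^{n-1})$ --- the naive order count is the correct one, and the claimed two extra powers are not produced.

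What is actually needed to reach $O(r^{n+1})$ is the vanishing of $g_{(n-1)}$ itself, and this does not follow from conformal flatness of the boundary alone: Riemannian AdS--Schwarzschild is smoothly conformally compact Einstein, its conformal infinity $S^1 \times S^{n-2}$ is locally conformally flat, yet $g_{(n-1)} \neq 0$ and $\left|\weyl_g\right|_g \sim r^{n-1}$, exactly saturating the bound your computation really gives. (Once $g_{(n-1)} = 0$, the FG recursion with flat boundary data forces $h_r = \hat h$ to infinite order and the conclusion follows trivially; the content of the lemma is thus concentrated in proving $g_{(n-1)} = 0$.) Neither your proposal nor the paper's outline supplies an argument for this vanishing, so you should either add one using whatever extra hypotheses are in force, or check precisely what the cited result of \cite{W} assumes before invoking the lemma in this form.
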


Here is the outline of the proof of the above lemma. We refer the reader to \cite{W} for details. Straightforward calculations yield that
if an Einstein metric $g$ is at least $C^2$ conformally compact, then the sectional curvature in $X$ satisfies

\begin{equation}\label{eqDecaySec}
\sec_g = -1 + O(r^2).
\end{equation}

The most basic and important fact about asymptotically hyperbolic manifolds is that a conformal infinity $(\partial X,g_0)$ determines
a unique defining function $r$ in a collar neighborhood of $\partial X$ such that

$$g=r^{-2}(dr^2 + g_r),$$

\noindent where $g_r$ is an $r$-dependent family of metrics on $\partial X$ with $g_r|_{r=0}=g_0$. See e.g. \cite{Le1}.
It follows from the work of Fefferman and Graham \cite{FG} that the Einstein equation implies the following asymptotic
 expansion for the metric $g$. For $n$ even,

$$g_r=g_0 + g_{(2)}r^2 +(\text{even powers}) + g_{(n-2)}r^{n-2} + g_{(n-1)}r^{n-1} +... ,$$

\noindent where the $g_{(j)}$ are tensors on $\partial X$ and $g_{(n-1)}$ is trace-free with respect to $g_0$. The
tensors $g_{(j)}$ for $j\leq n-2$ are locally formally determined by the metric $g_0$, but
$g_{(n-1)}$ is formally undetermined. For $n$ odd the analogous expansion is

$$g_r = g_0 + g_{(2)}r^2 + (\text{even powers}) + kr^{n-1} \log r + g_{(n-1)}r^{n-1} + ... ,$$

\noindent where the $g_{(j)}$'s are locally determined for $j\leq n-2$, $k$ is locally determined and
trace-free, but $g_{(n-1)}$ is formally undetermined.

Due to Theorem A in \cite{CDLS}, we know that if $(X^n,g)$ is a conformally compact of regularity $C^2$ and its conformal infinity
is smooth, then in fact $(X^n,g)$ is conformally compact of order $C^{\infty}$ if $n$ is even or if $n$ is odd and $k \equiv 0$, where $k$
is a conformally covariant tensor. Therefore according Fefferman-Graham expansion, we can get $|\weyl|_g = O(r^{n+1})$ if the conformal
infinity is locally conformally flat.

\begin{thm}\label{curvaturebehavior}
Suppose that $(X^n,g)$ is a conformally compact Einstein manifold of dimension $n \geq 5$ and of
regularity $C^{2, \mu}$ for some $\mu \in (0; 1)$. If we further suppose that there exists
$p \in \left(1; \frac{n-1}{2}\right]$ such that $\|\weyl\|_{L^p(X^n,g)} <\infty$, then
$$|\weyl| = O(r^{n+1}),$$
where $r$ is some special defining function.
\end{thm}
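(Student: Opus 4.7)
The plan is to use the $L^p$-integrability of $\weyl$ to force the conformal infinity to be locally conformally flat, and then invoke Lemma~\ref{lmWang}. I would argue by contradiction: assume $(\partial X, [g_0])$ is not locally conformally flat and derive divergence of $\int_X |\weyl|^p\,dV_g$.

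Fix a representative $g_0$ of the conformal infinity and the associated geodesic defining function $r$, so $\gbar = r^2 g$ extends to a $C^{2,\mu}$ metric on $\overline{X}$ with $\gbar = dr^2 + g_r$ in a collar of $\partial X$ and $g_r|_{r=0} = g_0$. Because the $(1,3)$-Weyl tensor is conformally invariant, one has $|\weyl|_g = r^2\,|\weyl^{\gbar}|_{\gbar}$ on $X$, and since $\gbar$ is $C^{2,\mu}$ up to the boundary, $|\weyl^{\gbar}|_{\gbar}$ extends continuously to $\overline{X}$; moreover $dV_g = r^{-n}\,dV_{\gbar}$. The key analytic input is the comparison of $|\weyl^{\gbar}|_{\gbar}$ at $\partial X$ with $|\weyl^{g_0}|_{g_0}$. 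Since $g$ is Einstein, the Fefferman--Graham expansion forces $\partial_r g_r|_{r=0}=0$, so the second fundamental form of $\partial X$ in $\gbar$ vanishes and Gauss's equation identifies the tangential Riemann of $\gbar$ at $r=0$ with $\riem^{g_0}$. Inserting the explicit Fefferman--Graham formula for $g_{(2)}$ in terms of the Schouten tensor of $g_0$ into the Weyl formula then shows that the dimensional factors $1/(n-2)$ appearing in the Weyl of $\gbar$ (defined on an $n$-manifold) recombine with the Ricci-type correction from $g_{(2)}$ to produce exactly the factors $1/(n-3)$ of the Weyl of $g_0$ (defined on an $(n-1)$-manifold). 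Thus the purely tangential components of $\weyl^{\gbar}$ at $r=0$ equal $\weyl^{g_0}$, and in particular $|\weyl^{\gbar}|_{\gbar} \geq |\weyl^{g_0}|_{g_0}$ at every point of $\partial X$.

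Assume, for contradiction, that $g_0$ is not locally conformally flat. Since $\dim \partial X = n-1 \geq 4$, this means $\weyl^{g_0} \not\equiv 0$, so by continuity there is an open set $U \subset \overline{X}$ meeting $\partial X$ and a constant $c > 0$ with $|\weyl^{\gbar}|_{\gbar} \geq c$ on $U$. Working in a collar chart where $dV_{\gbar}$ has bounded positive density,
\[
\int_{U \cap X} |\weyl|_g^p\,dV_g \;\geq\; c^p \int_U r^{2p-n}\,dV_{\gbar} \;\geq\; c'\int_0^{\epsilon} r^{2p-n}\,dr,
\]
which diverges for $p \leq (n-1)/2$, contradicting the assumption $\|\weyl\|_{L^p(X,g)} < \infty$. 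Hence $\weyl^{g_0} \equiv 0$, i.e., $(\partial X, g_0)$ is locally conformally flat, and Lemma~\ref{lmWang} delivers $|\weyl| = O(r^{n+1})$.

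The main obstacle is the boundary comparison between $\weyl^{\gbar}$ and $\weyl^{g_0}$: because $\gbar$ and $g_0$ have different dimensions the Weyl normalizations differ, and the equality at $r=0$ hinges on the Einstein condition through the explicit form of the Fefferman--Graham coefficient $g_{(2)}$. Once that algebraic compatibility is verified, the integrability argument and the appeal to Lemma~\ref{lmWang} are routine.
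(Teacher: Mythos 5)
Your argument is the paper's argument run in contrapositive: the paper shows directly that finiteness of $\int_X r^{2p-n}|\weylbar|^p_{\gbar}\,dV_{\gbar}$ with $2p-n\le -1$ forces $\weylbar\equiv 0$ on $\partial X$, then deduces local conformal flatness of the boundary and invokes Lemma \ref{lmWang}; you assume non-flatness and derive divergence of the same integral. The only substantive difference is how the boundary Weyl tensors are compared, which is exactly the step you flag as unverified. The paper does not need your exact identity ``tangential part of $\weylbar$ at $r=0$ equals $\weyl^{g_0}$'' (which would require computing $g_{(2)}$ and checking the recombination of the $1/(n-2)$ and $1/(n-3)$ normalizations). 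Instead it only uses that the Fefferman--Graham expansion makes $\partial X$ totally geodesic in $(\overline{X},\gbar)$, so Gauss--Codazzi gives
$\hweyl = \weylbar|_{T\partial X} + (\overline{P}-\widehat{P})\kulk\ghat$,
and the two summands on the right lie in orthogonal subspaces (totally trace-free part versus Kulkarni--Nomizu image of the metric). Hence $\weyl^{g_0}=\hweyl$ is the orthogonal projection of $\weylbar|_{T\partial X}$ onto the trace-free part, which already yields the pointwise inequality $|\weyl^{g_0}|_{g_0}\le|\weylbar|_{\gbar}$ that your contradiction needs --- no explicit Fefferman--Graham coefficient enters. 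So your route is sound and its one gap can be closed by this softer orthogonality observation rather than the algebraic computation you propose; if you do want the exact equality, it is true for geodesic compactifications of Poincar\'e--Einstein metrics, but it is more than the theorem requires.
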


\begin{proof}
Let $r: \overline{X} \to \bR$ be an arbitrary defining function for the conformal infinity $\partial X$
of $X$ and let $\gbar = \rho^2 g$ be the compactified metric. We denote with a bar quantities associated
to the metric $\gbar$, e.g. $\weylbar$ denotes its Weyl tensor. By assumption, $\gbar$ is a $C^{2, \mu}$
 metric on $\overline{X} = X \cup \partial X$.

We first note that $\left|\weyl\right|_g = \rho^2 \left|\weylbar\right|_{\gbar}$. As a consequence,
\[\int_X \left|\weyl\right|_g^p dV_g = \int_X \rho^{2p-n}\left|\weylbar\right|_{\gbar}^p dV_{\gbar}.\]

Since $\gbar$ is $C^{2, \mu}$, $\weylbar$ is a continuous 4-tensor. From the fact that $p \leq \frac{n-1}{2}$,
the function of $\rho$ appearing in the integral on the right-hand side blows up faster than $\rho^{-1}$
when approaching $\partial X$. As a consequence, if this quantity is to be finite, this impose that
$\weylbar \equiv 0$ on $\partial X$.

From the Fefferman-Graham expansion of the metric $g$, we immediately see that the second fundamental form
of $\partial X$ in the manifold $\overline{X}$ vanishes.

If $\ghat$ denotes the metric induced on the conformal infinity $\partial X$, it follows from the Gauss-Codazzi
equations that the Riemann tensor $\hriem$ of $\ghat$ is equal to the restriction of $\riembar$ to $T(\partial X)$.
We denote $\overline{P}$ and $\widehat{P}$ the Schouten tensors of the metrics $\gbar$ and $\ghat$. From the
decomposition of the Riemann tensors
\[
\left\lbrace
\begin{aligned}
 \hriem   & = \ghat \kulk \widehat{P}  + \hweyl,\\
 \riembar & = \gbar \kulk \overline{P} + \weylbar\\
\end{aligned}
\right.
\]
it follows that
\[\hweyl = \weylbar + \left(\overline{P} - \widehat{P}\right)\kulk \ghat,\]
Since $\weylbar \equiv 0$ on $\partial X$, we conclude that
\[\hweyl = \left(\overline{P} - \widehat{P}\right)\kulk \ghat,\]
which implies $\hweyl \equiv 0$ because the two sides of the equality belong to orthogonal subspaces of
$\Sigmatil^4(\partial X)$.

As a consequence, we have proven that $\partial X$ is locally conformally flat. The theorem follows from
Lemma \ref{lmWang}.
\end{proof}

Finally we remove the condition $C^{2,\mu}$-regularity to give the pointwise estimate of $|\weyl|$. According to Proposition \ref{lp} we
get weighted $L^2$-estimate for the Weyl tensor. Using Lemma \ref{lmChengYau}, we are able to show the following theorem:

\begin{thm}\label{mainthm}
Suppose that $(X^n,g)$, $n \geq 7$ is a complete noncompact Einstein manifold with an essential set $\bD$. If
$\| \weyl\|_{L^p(X^n,g)} <\infty$ for some $p \in \left[1; \frac{n-1}{2}\right)$, then
$$|\weyl| \leq Ce^{-(n+1)\rho}.$$
\end{thm}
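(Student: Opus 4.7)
The plan is to upgrade the weighted $L^{2}$ decay of Proposition \ref{lp} to the sharp pointwise bound $|\weyl| \leq C e^{-(n+1)\rho}$ via three main steps: interior elliptic regularity to get an initial pointwise estimate, a Bochner / refined Kato computation to produce a pointwise differential inequality for $u \definedas |\weyl|$, and a maximum principle comparison against the barrier $A\,e^{-(n+1)\rho}$.

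For the first step, Proposition \ref{lp} provides $\|e^{\frac{a}{2}\rho}\weyl\|_{W^{1,2}(X,g)} < \infty$ for any $a \in [0, n-5)$. Applying interior elliptic regularity (local Moser iteration or interior $W^{2,q}$ estimates) to the Weyl equation $\triangle \weyl + 2(n-1)\weyl + 2\cQ(\weyl) = 0$ on the uniformly Euclidean harmonic coordinate balls of radius $r_H/2$ provided by Lemma \ref{lmInjRadius}, and using the uniform $L^\infty$ bound on $\weyl$ that appears in the proof of Lemma \ref{lmDecayWeyl}, one obtains an initial bound $|\weyl(x)| \leq C e^{-\alpha \rho(x)}$ for any $\alpha < (n-5)/2$. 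This is far from the target rate $n+1$, but it guarantees that $\cQ(\weyl)$ becomes a genuine perturbation at infinity.

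For the second and third steps, combine the Bochner identity $\tfrac{1}{2} \triangle |\weyl|^2 = |\nabla\weyl|^2 + \langle \weyl, \triangle\weyl\rangle$, the Weyl equation, and a refined Kato inequality $|\nabla\weyl|^2 \geq (1+\delta)\,|\nabla u|^2$ (with $\delta = \delta(n) > 0$ available for Weyl tensors on Einstein manifolds) to get on $\{u > 0\}$ a pointwise inequality of the form
\[
\triangle u \;\geq\; \delta \,\frac{|\nabla u|^2}{u} \;-\; 2(n-1)\,u \;-\; C\,u^{2}.
\]
Compare now $u$ with the barrier $\bar u \definedas A\, e^{-(n+1)\rho}$, which by Lemma \ref{lmMeanCurvEstimate} satisfies $\triangle \bar u = 2(n+1)\bar u + o(\bar u)$ at infinity. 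Passing to the quotient $v = u/\bar u$, or to an auxiliary power $u^\gamma$ chosen so that the refined Kato gain exactly absorbs the gradient term, the differential inequality above translates into an elliptic inequality for $v$ on the exterior $X \setminus \bD_R$ whose coercivity constant is controlled by the spectral gap of Lemma \ref{spetrum}. Choosing $A$ large enough so that $v \leq 1$ on $\partial \bD_R$ (possible because of the initial decay from the first step), and noting that $v \to 0$ at infinity, the maximum principle gives $v \leq 1$ on $X \setminus \bD_R$, which is exactly the desired bound.

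The main obstacle is closing this final comparison. For the maximum principle to apply, the spectral coercivity extracted from Lemma \ref{spetrum} (after subtracting the $2(n-1)$ coming from the Weyl equation) must dominate the indicial shift $2(n+1) - 2(n-1) = 4$ between the barrier rate and the natural rate of the linearized equation. A naive accounting would demand $(n-5)^{2}/4 > 4$, i.e.\ $n \geq 10$; exploiting the refined Kato gain $\delta$ together with a bootstrap that iteratively improves the exponent $\alpha$ in $|\weyl| \leq C e^{-\alpha \rho}$ and feeds the sharper decay back into the control of $\cQ(\weyl)$ lowers the threshold to $n \geq 7$, which is exactly the hypothesis of the theorem. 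The remaining technicalities --- matching boundary values of $v$ on $\partial \bD_R$, handling the zero set $\{u = 0\}$, and tracking the lower-order corrections from $\triangle \rho = (n-1) + o(1)$ --- are routine once the coercivity bookkeeping is in place.
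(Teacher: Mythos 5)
There is a genuine gap in the final step, and it is not a technicality: the barrier comparison against $\bar u = A e^{-(n+1)\rho}$ cannot close at the rate $n+1$ by any maximum-principle argument of the type you describe. The linearized Weyl equation is $\triangle u + 2(n-1)u = 0$, whose indicial roots are $s_{\pm} = \tfrac{1}{2}\left((n-1)\pm\sqrt{(n-1)(n-9)}\right)$; both are strictly less than $n-1$, hence far below $n+1$. Concretely, $(\triangle + 2(n-1))e^{-(n+1)\rho} = (4n + o(1))e^{-(n+1)\rho} > 0$, so $e^{-(n+1)\rho}$ is a \emph{subsolution}, not a supersolution, of the linearized operator, and the sign in the comparison goes the wrong way. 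The refined Kato inequality \eqref{eqKato} and a bootstrap on $\cQ(\weyl)$ can only shift the effective indicial root by a bounded amount (the paper's own computation, with the optimal choice $a = \frac{n-1}{n-3}$, lands at the rate $\frac{(n-1)^2}{2(n-3)} + \delta$, roughly $n/2$); no amount of iterating the quadratic term helps, because the obstruction sits in the \emph{linear} zeroth-order coefficient $2(n-1)$, which does not improve as $\weyl$ decays. Your accounting of the ``indicial shift'' as $2(n+1)-2(n-1)=4$ compares the barrier against $\triangle$ alone rather than against $\triangle + 2(n-1)$; the correct gap is $4n$, which dominates every spectral constant available from Lemmas \ref{lmChengYau} and \ref{spetrum} in every dimension.

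The missing idea is that the rate $n+1$ is not of PDE origin at all. The paper's proof uses the Bochner/refined-Kato/spectral-gap machinery (essentially your steps, implemented via the weighted tensor $\weyl_1 = e^{a\rho}\weyl$ and the $L^2$ estimate of Lemma \ref{lmChengYau} applied to $\psi_+ = \max\{|\weyl_1| - Ce^{-b\rho},0\}$) only to reach an intermediate decay $|\weyl| \leq Ce^{-\alpha\rho}$ with $\alpha > 2$. That threshold is exactly what is needed to invoke Proposition \ref{propHQS} and conclude that $(X^n,g)$ is $C^{2,\mu}$-conformally compact. The jump to $e^{-(n+1)\rho}$ then comes from Theorem \ref{curvaturebehavior}: the hypothesis $\weyl \in L^p$ with $p \leq \frac{n-1}{2}$ forces $\weylbar$ to vanish on $\partial X$, hence (via the Gauss--Codazzi and Schouten decomposition) the conformal infinity is locally conformally flat, and then the Fefferman--Graham expansion together with Lemma \ref{lmWang} gives $|\weyl| = O(r^{n+1})$. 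Your proposal contains none of this second half, and without it the stated decay rate is out of reach.
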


An essential element in the proof of this theorem is \cite[Theorem 1.2]{HQS} which we recall here for the sake of
completeness:

\begin{proposition}\label{propHQS}
Suppose that $(X^n, g)$ is a complete Riemannian manifold with an essential set $\cD$. If the Riemann tensor
satisfies the following assumptions:
\[
\begin{aligned}
 \left|\riem - \bfK\right|  & = O(e^{-a \rho}),\\
 \left|\nabla \riem \right| & = O(e^{-a\rho})
\end{aligned}
\]
for some constant $a > 2$, then there is a smooth closed manifold $\partial X$ and a smooth structure on $\overline{X} = X \cup \partial X$,
such that setting $x = e^{-\rho}$ and extending it by zero on $\partial X$, $x$ is a defining function for $\partial X$ and the metric
$\gbar =x^2 g$ extends to a $C^{2, \mu}$ metric on the manifold $\overline{X}$ for some $\mu \in (0; 1)$. That is to say $(X, g)$ is
$C^{2, \mu}$-conformally compact.
\end{proposition}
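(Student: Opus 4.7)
The plan is to construct the compactification $\overline{X}$ explicitly via Fermi coordinates emanating from the essential set. Since $\cD$ has smooth strictly convex boundary $\bfB \definedas \partial \cD$ and $\rho$ has no critical points on $X \setminus \cD$, the outward normal flow defines a diffeomorphism $[0,\infty) \times \bfB \to X \setminus \cD$ in which $g = d\rho^2 + g_\rho$ for a one-parameter family of metrics $g_\rho$ on $\bfB$. Setting $x \definedas e^{-\rho}$ and attaching a boundary copy $\{0\} \times \bfB$ extends the parameter interval to $[0,1) \times \bfB$; I take this to define both $\partial X \cong \bfB$ and the smooth structure on $\overline{X} = X \cup \partial X$. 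In these coordinates the candidate compactified metric becomes $\gbar = x^2 g = dx^2 + x^2 g_\rho$, so everything reduces to controlling $x^2 g_\rho$ in $C^{2,\mu}$ up to $x = 0$.

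The $C^0$ extension comes from a sharpened Riccati analysis. Decomposing $\riem = \bfK + (\riem - \bfK)$ and plugging into \eqref{eqRiccati} gives
\[
\partial_\rho \sffud{j}{i} + \sffud{j}{k} \sffud{k}{i} = \kronecker{j}{i} + O(e^{-a\rho}).
\]
Writing $\sff = \id + T$ reduces this to $\partial_\rho T + 2T = -T^2 + O(e^{-a\rho})$, a perturbation of a linear equation with stable fixed point $T = 0$ and eigenvalue $-2$. A bootstrap in the spirit of Lemma \ref{estfrriccatiequa} but adapted to exponentially decaying sources yields $|T| = O(e^{-\min(a,2)\rho}) = O(e^{-2\rho})$, where the last equality uses $a > 2$. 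Integrating $\partial_\rho g_\rho = 2 g_\rho \sff$ then gives $g_\rho = e^{2\rho}(\hat g + O(e^{-2\rho}))$ for a limiting tensor $\hat g$ on $\bfB$. Since $x = e^{-\rho}$ converts $d\rho^2$ into $dx^2/x^2$ and cancels the factor $e^{2\rho}$ in front of $g_\rho$, one obtains $\gbar = dx^2 + \hat g + O(x^2)$, so $\gbar$ extends continuously with $\gbar|_{\partial X} = \hat g$ and $x$ is a defining function with $|dx|^2_{\gbar}=1$ on $\partial X$.

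To upgrade to $C^{2,\mu}$, the assumption $|\nabla \riem| = O(e^{-a\rho})$ plays its central role. Differentiating the Riccati equation in a tangential direction $e_k$ produces a linear equation for $\nabla_{e_k} \sff$ whose inhomogeneous term involves $\nabla \riem$, and the same stability-of-the-fixed-point bootstrap yields $|\nabla_{e_k} T| = O(e^{-\min(a,2)\rho})$. An analogous estimate for $\partial_\rho T$ is obtained algebraically from the Riccati equation itself, and a further differentiation controls second derivatives of $\sff$, hence of $g_\rho$. Tangential Hölder regularity is most efficiently obtained by placing harmonic coordinates on $g$-balls of a fixed radius near infinity, which are available thanks to Lemma \ref{lmInjRadius}; in such coordinates the Einstein-type equation for $g$ is elliptic with $C^{0,\mu}$ coefficients (since $\nabla \riem$ is bounded and Hölder continuous by Schauder), and interior Schauder estimates give uniform $C^{2,\mu}$ bounds on $g$. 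Transferring these bounds through the conformal change $g \mapsto x^2 g$ and using the decay estimates on $g_\rho - e^{2\rho} \hat g$ yields the claimed $C^{2,\mu}$ regularity of $\gbar$ up to $\partial X$.

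The main obstacle is the bookkeeping of how derivatives interact with the conformal rescaling: each application of $\partial_\rho$ to $g_\rho$ produces a term of size $e^{2\rho} = x^{-2}$, so preserving boundedness after multiplication by $x^2$ requires extracting the leading behavior of $g_\rho$ and its derivatives to sufficient order, and the precise decay rate $a > 2$ is what ensures these leading terms cancel rather than generating an uncancelled $x^{-2}$ singularity in the first or second derivatives of $\gbar$. A secondary technical difficulty is producing a consistent smooth structure on $\overline{X}$ in which both the product coordinates $(x, \theta)$ and the interior harmonic coordinates patch up to $\partial X$; this is handled by a covering argument showing that harmonic coordinate charts on $g$-balls converge, under the rescaling by $x$, to smooth coordinate charts on $\bfB$, with the necessary uniformity furnished by the decay estimates above.
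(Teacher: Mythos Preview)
The paper does not actually prove Proposition~\ref{propHQS}: it is quoted verbatim from \cite[Theorem~1.2]{HQS} ``for the sake of completeness'' and then used as a black box in the proof of Theorem~\ref{mainthm}. So there is no proof in the paper to compare against.

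Your sketch is a reasonable reconstruction of the argument one finds in \cite{HQS} and related work (\cite{Ba,BG,GicquaudCompactification}): Fermi coordinates off $\partial\cD$, sharpened Riccati analysis for $T \definedas \sff - \id$, extraction of the leading $e^{2\rho}$ behaviour of $g_\rho$, and then differentiation of the evolution equations to climb to $C^2$. Two points are worth flagging. First, Proposition~\ref{propHQS} does \emph{not} assume $g$ is Einstein, so your appeal to ``the Einstein-type equation for $g$'' in harmonic coordinates is misplaced as stated; what is actually available is the identity $-\tfrac{1}{2} g^{kl}\partial_k\partial_l g_{ij} = \ric_{ij} - Q(g,\partial g)$ in harmonic coordinates, together with the pointwise control of $\ric$ and $\nabla\ric$ coming directly from the hypotheses on $\riem - \bfK$ and $\nabla\riem$. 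Second, the H\"older exponent $\mu$ is produced by the gap $a-2>0$, not by an interior Schauder step: from $\partial_\rho T + 2T = -T^2 + O(e^{-a\rho})$ one gets that $e^{2\rho}T$ converges with error $O(e^{-(a-2)\rho}) = O(x^{a-2})$, and it is this rate, propagated through $\partial_x^2(x^2 g_\rho)$, that yields $\gbar \in C^{2,\mu}$ with $\mu$ tied to $a-2$. Your paragraph on ``bookkeeping of the conformal rescaling'' gestures at this but does not isolate the mechanism; in the actual proof this is the heart of the matter, whereas the harmonic-coordinate Schauder estimates you emphasize are peripheral.
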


\begin{proof}[Proof of Theorem \ref{mainthm}]
We will assume that $n \geq 8$ and indicate the modifications for $n = 7$.
The first step is to obtain an exponential (pointwise) decay of $|\weyl|$ at infinity. To this end,
we set $\weyl_1 \definedas e^{a\rho} \weyl$ for some $a > 0$ to be chosen later. From Equation
\eqref{equationofweyltensor}, $\weyl_1$ satisfies
\begin{eqnarray}
\triangle \weyl_1
  & = & (\triangle e^{a\rho}) \weyl + 2 \left\<\nabla e^{a\rho}, \nabla \weyl\right\> + e^{a \rho} \triangle \weyl\nonumber\\
  & = & \left[a^2 + (n-1) a + o(1)\right] \weyl_1 + 2 a e^{a\rho} \nabla_{\nabla\rho} \weyl + e^{a \rho} \triangle \weyl\nonumber\\
  & = & \left[a^2 + (n-1) (a-2) + o(1)\right] \weyl_1 + 2 a e^{a\rho} \nabla_{\nabla\rho} \weyl - 2 \cQ(\weyl, \weyl_1),\label{eqWeyl1}
\end{eqnarray}
where we used $\triangle \rho = H = n-1 + o(1)$ (see Lemma \ref{lmMeanCurvEstimate}). Next, we compute $\triangle |\weyl_1|^2$
in two different ways at any point where $|\weyl_1| \neq 0$:

\begin{eqnarray*}
\triangle |\weyl_1|^2
  & = & 2 \left( \left|\nabla |\weyl_1|\right|^2 + |\weyl_1| \triangle |\weyl_1| \right)\\
  & = & 2 \left( \left|\nabla \weyl_1\right|^2 + \left\<\weyl_1, \triangle \weyl_1\right\> \right)\\
  & = & 2 \left( \left|\nabla \weyl_1\right|^2 + \left[a^2 + (n-1) (a-2) + o(1)\right] |\weyl_1|^2\right.\\
  & & \qquad \left. + 2 a e^{a\rho} \left\<\nabla_{\nabla\rho} \weyl, \weyl_1\right\> - 2 \left\<\weyl_1,\cQ(\weyl, \weyl_1)\right\>\right).
\end{eqnarray*}

As a consequence, we get the following equation for $|\weyl_1|$:
\begin{equation}\label{eqNormWeyl1}
|\weyl_1| \triangle |\weyl_1| - \left[a^2 + (n-1) (a-2) + o(1)\right] |\weyl_1|^2
  = \left|\nabla \weyl_1\right|^2 - \left|\nabla |\weyl_1|\right|^2 + 2 a e^{2a\rho} \left\<\nabla_{\nabla\rho} \weyl, \weyl\right\>,
\end{equation}
where we used Lemma \ref{lmDecayWeyl} to get $\<\weyl_1, \cQ(\weyl, \weyl_1)\> = o\left(|\weyl_1|^2\right)$.

The following refined Kato inequality holds for the Weyl tensor of any Einstein manifold (see e.g. \cite{CGH}):
\begin{equation}\label{eqKato}
 \left|\nabla |\weyl|\right| \leq \frac{n-1}{n+1} \left|\nabla \weyl\right|.
\end{equation}

We are going to take advantage of it to estimate the right-hand side of Equation \eqref{eqNormWeyl1}. We first remark that
\begin{eqnarray*}
 \left|\nabla  \weyl_1\right|^2  & = & e^{2a\rho} \left|\nabla  \weyl \right|^2 + 2 a e^{2a\rho} \left\<\nabla_{\nabla\rho}  \weyl,   \weyl \right\> + a^2 e^{2a\rho} |\weyl|^2,\\
 \left|\nabla |\weyl_1|\right|^2 & = & e^{2a\rho} \left|\nabla |\weyl|\right|^2 + 2 a e^{2a\rho} \left\<\nabla_{\nabla\rho} |\weyl|, |\weyl|\right\> + a^2 e^{2a\rho} |\weyl|^2\\
                                 & = & e^{2a\rho} \left|\nabla |\weyl|\right|^2 + a e^{2a\rho} \nabla_{\nabla\rho} |\weyl|^2 + a^2 e^{2a\rho} |\weyl|^2\\
                                 & = & e^{2a\rho} \left|\nabla |\weyl|\right|^2 + 2 a e^{2a\rho} \left\<\nabla_{\nabla\rho}  \weyl,   \weyl \right\> + a^2 e^{2a\rho} |\weyl|^2.
\end{eqnarray*}

Therefore, using Inequality \eqref{eqKato}, we get:
\begin{equation}\label{eqKato2}
\left|\nabla \weyl_1\right|^2 - \left|\nabla |\weyl_1|\right|^2
  \geq \frac{2}{n-1} e^{2a\rho} \left|\nabla  \weyl \right|^2
\end{equation}

Next, using Young's inequality, we remark that
\[
2 a e^{2a\rho} \left\<\nabla_{\nabla\rho} \weyl, \weyl\right\>
  \geq - \frac{2}{n-1} e^{2a\rho} \left|\nabla  \weyl \right|^2 - \frac{n-1}{2} a^2 e^{2a\rho} \left|\weyl\right|^2.
\]

Thus Equation \eqref{eqNormWeyl1} yields the following differential inequality:
\begin{equation}\label{eqNormWeyl3}
 |\weyl_1| \triangle |\weyl_1| - \left[a^2 + (n-1) (a-2) + o(1)\right] |\weyl_1|^2 \geq - \frac{n-1}{2} a^2 \left|\weyl_1\right|^2.
\end{equation}

We select $a = \frac{n-1}{n-3}$. The previous inequality becomes
\begin{equation}\label{eqNormWeyl2}
 \triangle |\weyl_1| \geq \left[- \frac{1}{2} \frac{(n-1)(3n-11)}{n-3} + o(1)\right]\left|\weyl_1\right|,
\end{equation}
at any point where $|\weyl_1| > 0$. Note that when $n > 7$, $a < \frac{n-5}{2}$ so from Proposition \ref{lp}, $W_1 \in L^2$.
We claim that $|W_1| \leq C e^{-\frac{n-1}{2} \rho}$. Indeed, set $b \definedas \frac{n-1}{2} + \delta$ for some small
$\delta > 0$. Then we have
\[
 \left(\triangle + \frac{1}{2} \frac{(n-1)(3n-11)}{n-3}\right) e^{-b\rho}
  = \left(\delta^2-\frac{1}{4} \frac{(n-1)(n-5)^2}{n-3} + o(1)\right) e^{-b\rho}.
\]
Select $\epsilon > 0$ such that \[\epsilon < \frac{1}{4} \frac{(n-1)(n-5)^2}{n-3}.\]
Provided that $\delta^2 < \frac{\epsilon}{2}$, there exists a compact set $K \supset \bD$
such that

\[
\left\lbrace
\begin{aligned}
 \triangle |\weyl_1|  & \geq - \left[\frac{1}{2} \frac{(n-1)(3n-11)}{n-3} + \frac{\epsilon}{2}\right]\left|\weyl_1\right|,\\
 \triangle e^{-b\rho} & \leq \left[\frac{1}{2} \frac{(n-1)(3n-11)}{n-3} + \frac{\epsilon}{2}\right] e^{-b\rho},
\end{aligned}
\right.
\]
and such that for any $W^{1,2}$-function $\phi$ supported in $K$, the following $L^2$-estimate holds (Lemma \ref{lmChengYau}):
\[\int_{X^n} |\nabla \phi|^2 dV_g \geq \left[\frac{(n-1)^2}{4} - \frac{\epsilon}{2}\right] \int \phi^2 dV_g.\]

We set $\psi = \left|\weyl_1\right| - C e^{-b\rho}$ where $C$ is chosen so large that $\psi < 0$ on $K$. Then $\psi$
satisfies
\begin{equation}\label{eqPsi}
 \triangle \psi \geq - \left[\frac{1}{2} \frac{(n-1)(3n-11)}{n-3} + \frac{\epsilon}{2}\right] \psi.
\end{equation}

We also define $\psi_+ = \max \{\psi, 0\}$ and note that $\psi_+ \in W^{1,2}$ and $\supp \psi_+ \subset X \setminus K$.
From Inequality \eqref{eqPsi}, we get
\begin{eqnarray*}
\left[\frac{(n-1)^2}{4} - \frac{\epsilon}{2}\right] \int_X \left|\psi_+\right|^2 dV_g
 & \leq & \int_{X^n} |\nabla \psi_+|^2 dV_g\\
 & \leq & - \int_{X^n} \psi_+ \triangle \psi dV_g\\
 & \leq & \left[\frac{1}{2} \frac{(n-1)(3n-11)}{n-3} + \frac{\epsilon}{2}\right] \int_{X^n} (\psi_+)^2 dV_g,\\
\left[\frac{1}{4} \frac{(n-1)(n-5)^2}{n-3}-\epsilon\right] \int_{X^n} (\psi_+)^2 dV_g
 & \leq & 0.
\end{eqnarray*}
From our assumption on $\epsilon$, this immediately implies that $\psi_+ \equiv 0$, that is to say
\[\left|\weyl_1\right| \leq C e^{-b\rho},\] or equivalently,
\[\left|\weyl\right| \leq C e^{-\left(\frac{1}{2} \frac{(n-1)^2}{n-3} + \delta\right)\rho}.\]

Since $n \geq 8$, $\frac{1}{2} \frac{(n-1)^2}{n-3} > 2$. In particular, from Proposition \ref{propHQS},
we conclude that the manifold $(X^n, g)$ is $C^{2, \mu}$-conformally compact for some $\mu \in (0; 1)$.
So it falls into the assumptions of Theorem \ref{curvaturebehavior}. This concludes the proof
of Theorem \ref{mainthm} for $n \geq 8$.

If $n = 7$, then $\frac{n-1}{n-3} = \frac{3}{2} > 1 = \frac{n-5}{2}$ so we can no longer apply Proposition \ref{lp}
for this value of $a$. Instead we choose $a = \frac{n-5}{2} - \frac{1}{4}$.
Inequality \eqref{eqNormWeyl3} becomes
\[
 \triangle |\weyl_1| \geq \left[8 + \frac{5}{8} + o(1)\right]\left|\weyl_1\right|.
\]
Setting $b \definedas 3 + \delta$, it can be checked that $e^{-b\rho}$ satisfies
\[
 \triangle e^{-b \rho} \leq \left[8 + \frac{5}{8} + o(1)\right] e^{-b\rho},
\]
outside some compact subset. We can then rephrase the previous proof, the only point to
note is that \[\frac{(n-1)^2}{4} = 9 > 8 + \frac{5}{8}.\] This is what allows the use of the asymptotic
$L^2$-estimate (Lemma \ref{lmChengYau}).
\end{proof}

\section{Applications}
\label{secApplications}
Together with Theorem \ref{mainresult1}, the rigidity result \cite[Theorem 1.6]{HQS} implies

\begin{thm}\label{rigidity}
Suppose that $(X^n,g)$, $n \geq 7$ is a complete noncompact Einstein manifold with an essential set $\bD$ and that
$X^n$ is simply connected at infinity. If we further assume $\| \weyl\|_{L^p(X^n,g)} < \infty$ for some $p$ satisfying
$1< p <\frac{n-1}{2}$, then $(X^n,g)$ is isometric to $\mathbb{H}^n$.
\end{thm}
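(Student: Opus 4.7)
The plan is to combine Theorem \ref{mainresult1} with the cited rigidity result \cite[Theorem 1.6]{HQS} by controlling the conformal infinity.

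First, I would apply Theorem \ref{mainresult1} directly: since $n \geq 7$ and $\|\weyl\|_{L^p} < \infty$ for some $p \in (1, \frac{n-1}{2})$, we obtain the sharp pointwise decay
\[ \left|\riem - \bfK\right| \leq C e^{-(n+1)\rho}. \]
In particular the decay rate is $n+1 > 2$, so after bootstrapping the Schauder estimates used in Lemma \ref{lmDecayWeyl} (applied this time to the derivative of the Weyl equation \eqref{equationofweyltensor}), we also get $|\nabla \riem| = O(e^{-(n+1)\rho})$. Together these are exactly the hypotheses of Proposition \ref{propHQS}, so $(X^n, g)$ is $C^{2,\mu}$-conformally compact with a well-defined boundary $\partial X$ and compactified metric $\gbar = x^2 g$ of class $C^{2,\mu}$.

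Next, I would reproduce the argument in the proof of Theorem \ref{curvaturebehavior}: using that $|\weyl|_g = x^2 |\weylbar|_{\gbar}$ and that $\weyl \in L^p$ with $p \leq \frac{n-1}{2}$, the finiteness of $\int x^{2p-n} |\weylbar|^p dV_{\gbar}$ together with continuity of $\weylbar$ up to $\partial X$ forces $\weylbar \equiv 0$ on $\partial X$. Then the Gauss--Codazzi computation relating $\hweyl$, $\weylbar$, and the Schouten tensors shows the conformal infinity $(\partial X, \widehat{g})$ is locally conformally flat.

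Third, I would use the hypothesis that $X^n$ is simply connected at infinity. Since the collar structure near $\partial X$ implies that a neighborhood of infinity in $X$ is diffeomorphic to $[0, \varepsilon) \times \partial X$, the fundamental group of such a neighborhood agrees with that of $\partial X$; simple connectedness at infinity therefore forces $\partial X$ to be connected and simply connected. Being a closed, simply connected, locally conformally flat manifold of dimension $n - 1 \geq 6$, Kuiper's theorem identifies $(\partial X, [\widehat{g}])$ with the round conformal sphere $\mathbb{S}^{n-1}$.

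Finally I would invoke \cite[Theorem 1.6]{HQS}: an asymptotically hyperbolic Einstein manifold whose conformal infinity is the standard round sphere is isometric to $\mathbb{H}^n$. This concludes the proof. The main conceptual obstacle is step three---transferring the topological assumption ``simply connected at infinity'' to a statement about $\pi_1(\partial X)$---but this follows cleanly from the collar neighborhood supplied by Proposition \ref{propHQS}. The remaining steps are direct invocations of earlier results in the paper or of classical theorems.
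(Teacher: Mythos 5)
Your opening moves coincide with the paper's: Theorem \ref{mainthm} gives $|\weyl|\leq Ce^{-(n+1)\rho}$, a Schauder bootstrap of the argument of Lemma \ref{lmDecayWeyl} applied to Equation \eqref{equationofweyltensor} gives $|\nabla\weyl|\leq C'e^{-(n+1)\rho}$, and since $\weyl=\riem-\bfK$ on an Einstein manifold these are decay estimates on $\riem-\bfK$ and $\nabla\riem$. At that point the paper simply stops: these two estimates, at the rate $n+1>n$, together with the essential set and simple connectivity at infinity, are exactly the hypotheses of the rigidity theorem of \cite{HQS} (Theorem 1.6/5.1 there), which directly yields the isometry with $\mathbb{H}^n$. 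Your proposal instead reconstructs a rigidity argument by hand: compactify via Proposition \ref{propHQS}, show $\weylbar\equiv 0$ on $\partial X$ and hence local conformal flatness of the boundary as in Theorem \ref{curvaturebehavior}, use simple connectivity at infinity plus the collar to get $\pi_1(\partial X)=1$, apply Kuiper to identify the conformal infinity with the round $\bS^{n-1}$, and then invoke a rigidity theorem for conformally compact Einstein manifolds with round-sphere conformal infinity.

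The gap is in that last invocation. First, the statement you attribute to \cite[Theorem 1.6]{HQS} (``an AHE manifold whose conformal infinity is the round sphere is isometric to $\mathbb{H}^n$'') is not what that theorem says; the HQS rigidity theorem is a curvature-decay rigidity statement (decay of $\riem-\bfK$ and $\nabla\riem$ at a rate faster than $e^{-n\rho}$, plus simple connectivity at infinity), which is precisely why the paper needs the sharp rate $n+1$ from Theorem \ref{mainthm} rather than an arbitrary exponential rate. Second, the theorem you actually need --- rigidity of conformally compact Einstein manifolds with conformal infinity the round conformal sphere --- is a deep result (due to Qing, building on Shi--Tian and the positive mass theorem) whose known proofs require either a spin assumption or a dimension restriction coming from the positive mass theorem; it is not available unconditionally for all $n\geq 8$ in the setting of this paper. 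Note that the paper itself adds a spin hypothesis in Theorem \ref{thmAdS} exactly when it wants to invoke a rigidity result of that type. So as written your final step either cites the wrong theorem or silently assumes one that does not cover all the dimensions in the statement. The intermediate steps (boundary conformal flatness, the collar argument for $\pi_1(\partial X)$, Kuiper) are fine, but they duplicate work that the cited HQS theorem already packages, and they do not by themselves close the proof.
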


\begin{proof}
By Theorem \ref{mainthm}, we know that there exits a constant $C > 0$ such that
$$|\weyl| \leq Ce^{-(n+1)\rho}.$$

On the other hand, by a direct refinenemt of the proof of Lemma \ref{lmDecayWeyl}, we also have
$$|\nabla \weyl| \leq C' e^{-(n+1)\rho}$$ for some constant $C' > 0$. See also \cite[Theorem 4.3]{BG}.
The theorem then follows from \cite[Theorem 5.1]{HQS}.
\end{proof}

As another application, we consider a similar question for static vacuum spacetimes. We recall that an
$(n + 1)$-dimensional static spacetime $(N^{n+1}, g)$ is a solution of the vacuum Einstein equations
\[\ric - \frac{\scal}{2} g + \Lambda g = 0\]

of the form

\[
\begin{aligned}
N^{n+1} & = \bR \times M^n,\\
g & = -V ^2dt^2 + h
\end{aligned}
\]

where $(M^n, h)$ is a Riemannian manifold, $V$ is a positive function on $M^n$ and $\Lambda$ is the so-called
cosmological constant which we choose equal to $-\frac{n(n-1)}{2}$. The vacuum Einstein equations can be written
in terms of $h$ and $V$ as

\begin{equation}\label{ric}
\ric_h + n h = \frac{\hess(V)}{V},
\end{equation}
and
\begin{equation}\label{delta}
\triangle_h V = nV.
\end{equation}

Computing the trace of these two equations, we see that $h$ has constant scalar curvature $\scal = -n(n- 1)$.
We will often just call the triple $(M^n, h, V)$ a static vacuum. We set

\[
\begin{aligned}
 \mathrm{E} & \definedas \riem_h - \bfK,\\
 \mathrm{T} & \definedas \ric_h + (n-1) h = \frac{\hess(V)}{V} - h.
\end{aligned}
\]

As another application of Theorem \ref{mainresult1}, we state the following theorem:

\begin{thm}\label{static}
Suppose given $(M^n,h,V)$ a static vacuum with $n \geq 6$ such that $(M, h)$ has an essential set $\bD$ and
$\int_ {M^n}V |E|_h^p dV_h<\infty$, for some $p \in \left(1; \frac{n}{2}\right)$. If we further assume that
$\left\<dV, d\rho\right\> >0$ outside of $\bD$, then there exists a constant $C > 0$ such that
$$|E| \leq Ce^{-(n+2)\rho}$$
and
$$\left|\frac{\hess(V)}{V} - h\right| \leq Ce^{-(n+2)\rho},$$
where $\rho$ is the distance to the essential set $\bD$.
\end{thm}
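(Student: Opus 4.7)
The plan is to Wick-rotate the static vacuum into a Riemannian Einstein manifold of dimension $n+1$ and then invoke Theorem \ref{mainresult1}. Consider the warped product $\widetilde{N} \definedas S^1 \times M$ with metric $\widetilde{g} \definedas V^2\, d\theta^2 + h$. Using O'Neill's curvature formulas for a warped product together with the static equations \eqref{ric}--\eqref{delta}, one verifies directly that $(\widetilde{N}, \widetilde{g})$ is a complete Einstein manifold of dimension $n+1 \geq 7$ satisfying $\ric_{\widetilde{g}} = -n \widetilde{g}$, so that its Weyl tensor equals $\riem_{\widetilde{g}} - \bfK_{\widetilde{g}}$.

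In the orthonormal frame $\{e_a, \widehat{T}\}$ with $\widehat{T} = V^{-1}\partial_\theta$, the tensor $\riem_{\widetilde{g}} - \bfK_{\widetilde{g}}$ has three blocks: a purely horizontal block equal to $E = \riem_h - \bfK_h$; a mixed block $(\riem_{\widetilde{g}} - \bfK_{\widetilde{g}})(\widehat{T}, e_i, \widehat{T}, e_j)$ equal (up to sign) to $T_{ij}$; and a vanishing vertical-vertical block since $\dim S^1 = 1$. Counting multiplicities yields $|\weyl_{\widetilde{g}}|^2 = |E|^2 + 4|T|^2$. The Ricci contraction of $E$ equals $T$, since $\ric(\bfK_h) = -(n-1)h$ and $\ric_h + (n-1)h = T$ by definition, giving the pointwise bound $|T|_h \leq C_n |E|_h$. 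Since $dV_{\widetilde{g}} = V\,d\theta\,dV_h$, the hypothesis $\int_M V|E|^p\,dV_h < \infty$ translates into $\|\weyl_{\widetilde{g}}\|_{L^p(\widetilde{N}, \widetilde{g})} < \infty$ for $p \in (1, n/2)$, exactly the integrability range that Theorem \ref{mainresult1} requires in dimension $n+1$.

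The delicate step is producing an essential set for $(\widetilde{N}, \widetilde{g})$. I take $\widetilde{\bD} = S^1 \times \bD_{R_0}$, where $\bD_{R_0} = \{\rho \leq R_0\}$ for $R_0 \geq 0$ chosen below. Strict convexity of $\partial\widetilde{\bD}$ follows from a block-diagonal second fundamental form, with horizontal block equal to that of $\partial\bD_{R_0}$ in $M$ and vertical block equal to $V^{-1}\langle dV, d\rho\rangle$, positive by hypothesis. Total convexity reduces to the convexity of $\rho \circ x$ along any geodesic $(\theta(t), x(t))$ of $\widetilde{g}$: the conservation law $V^2\dot\theta \equiv c$ contributes a non-negative term $c^2 V^{-3}\langle dV, d\rho\rangle$ to $(\rho \circ x)''$, and $\hess_h \rho(\dot x, \dot x)$ is non-negative outside $\bD$. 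Horizontal sectional curvatures of $\widetilde{g}$ coincide with those of $h$ and are negative outside $\bD$, while mixed ones equal $-1 - T(X, X)$ on unit horizontal $X$, so negativity outside $\widetilde{\bD}$ requires $|T| < 1$ on $M \setminus \bD_{R_0}$. I secure this by applying Lemma \ref{lmDecayWeyl} to $(\widetilde{N}, \widetilde{g})$: its proof needs only the $L^p$ bound on $\weyl_{\widetilde{g}}$ and an injectivity radius lower bound on $\widetilde{g}$, the latter inheritable from the essential set of $(M, h)$ via the warped product structure without an a priori essential set in $\widetilde{N}$. This gives $|\weyl_{\widetilde{g}}| \to 0$ at infinity, hence $|T| \to 0$, and $R_0$ can be chosen large enough.

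With $\widetilde{\bD}$ in hand, Theorem \ref{mainresult1} applied to $(\widetilde{N}, \widetilde{g}, \widetilde{\bD})$ yields $|\riem_{\widetilde{g}} - \bfK_{\widetilde{g}}| \leq C e^{-(n+2)\widetilde\rho}$, where $\widetilde\rho$ denotes distance to $\widetilde{\bD}$ in $\widetilde{g}$. A length comparison using the warping shows that horizontal paths realize the distance, so $\widetilde\rho(\theta, x) = \rho(x) - R_0$ outside $\widetilde{\bD}$; absorbing the resulting constant yields $|\riem_{\widetilde{g}} - \bfK_{\widetilde{g}}| \leq C'e^{-(n+2)\rho}$. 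Reading off the horizontal block gives $|E|_h \leq C'e^{-(n+2)\rho}$, and the mixed block gives $|V^{-1}\hess V - h|_h = |T|_h \leq C'e^{-(n+2)\rho}$, as claimed. The principal obstacle is the essential-set construction in the third paragraph: the negativity of mixed sectional curvature demands a priori decay of $T$, which appears circular, but is broken by invoking Lemma \ref{lmDecayWeyl} on $\widetilde{N}$ using only the $L^p$ bound and the injectivity radius inherited from $M$.
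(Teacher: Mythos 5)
Your proposal is correct and follows essentially the same route as the paper: pass to the Einstein product $(\bS^1\times M,\, V^2 d\theta^2 + h)$ of dimension $n+1\geq 7$, translate the weighted $L^p$ bound on $E$ into an $L^p$ bound on $\weyl_g$ via the block decomposition $|\weyl_g|^2 = |E|^2 + 4|T|^2$ with $|T|\leq C_n|E|$, produce an essential set using the hypothesis $\langle dV, d\rho\rangle > 0$ together with the decay of $\weyl_g$ obtained by running Lemma \ref{lmDecayWeyl} on the product (with the injectivity radius bound inherited from $M$ and $\inf V>0$), and then apply Theorem \ref{mainthm}. The only cosmetic difference is that you verify Definition \ref{essentialset} directly for $\bS^1\times\bD_{R_0}$, whereas the paper's Lemma \ref{essential} invokes a criterion from \cite{GicquaudThesis} (negative sectional curvature outside a compact set plus a proper function with positive definite Hessian, namely $\rho$ extended to the product with $\nabla^{(g)}_{00}\rho = V\langle d\rho, dV\rangle$).
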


\begin{rem}
The assumption $\left\<dV, d\rho\right\> > 0$ outside $\bD$ is natural and reasonable. Indeed, it is expected
that $V$ grows as $e^\rho$ at infinity. More precisely, $V$ is expected to have the following expansion at infinity:
$V = v e^\rho + O(1)$, where $v(x) = v(x^i)$ is a positive $C^2$ function depending only on the projection of a point
on the boundary of the essential set. The assumption $\left\<dV, d\rho\right\> > 0$ outside some compact set then can
be seen as a consequence of the fact that $V_0 = v e^{\rho}$ is a nice approximate solution of the equation
$\triangle V = n V$. We refer the reader to \cite{HS} and \cite{GicquaudCompactification} for more details.
\end{rem}

Notice that for static vacuum $(M^n,h,V)$, the Riemannian metric $g= V^2 d\theta^2 + h$ is an Einstein metric on
$\bS^{1}\times M$. Hence we consider the Einstein manifold $(\bS^1 \times M, V^2 d\theta^2 + h)$. For convenience,
in the following, the index $0$ refers to the direction $\partial_\theta$. Latin indices take values $1$ to $n$ and
refer to coordinates on $M$.

In order to prove Theorem \ref{static}, we need the following two lemmas:

\begin{lem}\label{WE}
Let $(M^n, h, V)$ be a static vacuum, if $\int_M V |E|_h^p dV_h < \infty$, then
$$\int_{\bS^1 \times M} |\weyl_g|^p_g dV_g < \infty.$$
\end{lem}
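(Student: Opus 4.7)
The plan is to factor the integral on $\bS^1 \times M$ over the $\theta$-circle and then bound the integrand by a constant multiple of $V|E|_h^p$. Since $g = V^2 d\theta^2 + h$ has volume element $dV_g = V\, d\theta \wedge dV_h$, and $V$, $h$, and the components of $\weyl_g$ are all $\theta$-independent, one immediately obtains
\begin{equation*}
\int_{\bS^1 \times M} |\weyl_g|_g^p\, dV_g = 2\pi \int_M |\weyl_g|_g^p \, V\, dV_h,
\end{equation*}
so it suffices to establish the pointwise bound $|\weyl_g|_g \leq C(n)|E|_h$ on $M$.

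The main step is to compute $\weyl_g$ explicitly from the warped product structure. Let $e_0 \definedas V^{-1}\partial_\theta$, a unit vertical vector for $g$. The standard warped-product curvature formulas (see e.g. O'Neill) show that the only nonvanishing components of $\riem_g$ are
\begin{equation*}
\riem_g(X, Y, Z, W) = \riem_h(X, Y, Z, W) \quad \text{for } X, Y, Z, W \in TM,
\end{equation*}
together with
\begin{equation*}
\riem_g(X, e_0, Y, e_0) = - V^{-1} \hess_h V(X, Y) \quad \text{for } X, Y \in TM.
\end{equation*}
Since $(\bS^1 \times M, g)$ is Einstein with $\ric_g = -n g$, the Weyl tensor is $\weyl_g = \riem_g - \bfK_g$. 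Restricting to pure $TM$-indices yields $\weyl_g(X, Y, Z, W) = E(X, Y, Z, W)$, while for the mixed indices
\begin{equation*}
\weyl_g(X, e_0, Y, e_0) = -V^{-1}\hess_h V(X, Y) + h(X, Y) = -T(X, Y),
\end{equation*}
where we used the static vacuum equation \eqref{ric}. All other components of $\weyl_g$ (those with an odd number of $e_0$-slots) vanish.

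The final observation is that $T$ is a trace of $E$: contracting $E_{ijkl} = \riem_{h,ijkl} + h_{ik}h_{jl} - h_{il}h_{jk}$ with $h^{ik}$ gives $T_{jl} = \ric_{h,jl} + (n-1)h_{jl}$, so $|T|_h \leq C(n)|E|_h$ pointwise. Combined with the component calculation above this yields $|\weyl_g|_g^2 \leq |E|_h^2 + C(n) |T|_h^2 \leq C'(n)|E|_h^2$, and hence
\begin{equation*}
\int_{\bS^1 \times M} |\weyl_g|_g^p\, dV_g \leq C''(n)\int_M V|E|_h^p\, dV_h < \infty.
\end{equation*}
The only place where mild care is required is in verifying the warped-product curvature formulas and extracting the identification $\weyl_g(X, e_0, Y, e_0) = -T(X, Y)$ from the static equation; everything else is book-keeping.
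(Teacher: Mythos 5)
Your proof is correct and follows essentially the same route as the paper: compute the components of $\weyl_g$ for the warped product $g = V^2 d\theta^2 + h$, identify the pure $TM$-part with $E$ and the mixed part with $-T$ via the static equation, and use that $T$ is a trace of $E$ together with $dV_g = V\, d\theta\, dV_h$. The paper additionally records the reverse inequality $|E|_h \leq |\weyl_g|_g$ to get an equivalence of the two integrals, but for the stated lemma your one-sided bound suffices.
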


\begin{proof}
By a direct computation, using Equations \eqref{ric} and \eqref{delta}, we get

\[
\begin{aligned}
W_{ijkl}(g) & = E_{ijkl},\\
W_{0jkl}(g) & = 0,\\
W_{0j0l}(g) & = - V^2(V^{-1}\nabla_{h}^{2}V-h)=-V^2T,
\end{aligned}
\]
and
$$|\weyl(g)|^2_g= |E|^2_h + 4 |T|^2_h.$$

Note that $T_{ik}=h^{jl}E_{ijkl}$, thus there is a constant $C = C(n)$ such that
$$|E|_h \leq |\weyl_g|_g \leq C |E|_h.$$

Therefore the assumption \[\int_ {M^n}V |E|_h^{p} dV_h < \infty\] is equivalent to
\[\int_{{S}^{1}\times M^n} |\weyl_g|^{p}_g dV_g < \infty.\]
\end{proof}

\begin{lem}\label{essential}
Let $(M^n, h, V)$ be a static vacuum. If $(M^n, h)$ has an essential set $\bD$, $\rho$ is the distance to $\bD$ and $\left\<d\rho, dV\right\> >0$
outside $\bD$, then the manifold $(\bS^1 \times M^n, g)$ admits an essential set.
\end{lem}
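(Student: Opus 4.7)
My plan is to take the essential set of $(\bS^1 \times M, g)$ to be $\bD' = \bS^1 \times \bD_R$ for some $R \geq 0$, ideally $R = 0$ and enlarged only if needed to make the sectional curvature condition hold outside. Throughout, I would use the standard warped-product formulas for the Christoffel symbols, Riemann tensor and second fundamental form of $g = V^2 d\theta^2 + h$, combined with the static vacuum equations \eqref{ric} and \eqref{delta}.

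For the smoothness and strict convexity of the boundary $\bS^1 \times \Sigma_R$, the outward $g$-unit normal is simply $\nu = \nabla\rho$, which is horizontal of unit $h$-length, hence of unit $g$-length. Using $\nabla^g_{\partial_\theta}\partial_\theta = -V\nabla V$, I would check that the second fundamental form $S^g$ of $\bS^1 \times \Sigma_R$ is block diagonal: on pairs of horizontal tangent vectors it reduces to $S^h$, the second fundamental form of $\Sigma_R$ in $(M, h)$, which is positive definite since $\bD$ is essential in $M$ and $\rho$ has no critical points outside $\bD$; and $S^g(\partial_\theta, \partial_\theta) = V\langle\nabla V, \nabla\rho\rangle$, which is strictly positive on $\Sigma_R$ for any $R > 0$ by hypothesis. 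Mixed components vanish because $\nu$ is horizontal while $\partial_\theta$ is vertical.

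For total convexity, the $\theta$-independence of $g$ yields the conservation law $c \definedas V^2\dot\theta$ along any geodesic $\gamma(t) = (x(t), \theta(t))$ of $g$. The horizontal part of the geodesic equation then becomes $\nabla^h_{\dot x}\dot x = c^2 V^{-3}\nabla V$. Setting $f(t) \definedas \rho(x(t)) - R$, a straightforward computation yields
\[
f''(t) = \hess^h(\rho)(\dot x, \dot x) + c^2 V^{-3}\langle\nabla V, \nabla\rho\rangle,
\]
and both terms are non-negative on $\{f > 0\}$: the first by the convexity of $\rho$ outside $\bD$ in $(M, h)$, the second by the hypothesis $\langle dV, d\rho\rangle > 0$. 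Since $f \leq 0$ at the endpoints, if $f$ were positive somewhere in between one could extract a maximal open subinterval $(\alpha, \beta)$ on which $f > 0$ with $f(\alpha) = f(\beta) = 0$; convexity on $[\alpha, \beta]$ with these boundary values would force $f \leq 0$ there, contradicting $f > 0$ inside.

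The real obstacle is the third condition, strict negativity of the sectional curvature outside $\bD'$. Horizontal two-planes present no difficulty: the warped-product formulas give $\sec_g = \sec_h < 0$ on $M \setminus \bD$. For a mixed two-plane spanned by a $g$-unit horizontal $X$ and $U = V^{-1}\partial_\theta$, the warped-product curvature formula yields $\sec_g(X, U) = -V^{-1}\hess^h(V)(X, X)$, which via \eqref{ric} equals $-\ric_h(X, X) - n$. Making this strictly negative amounts to requiring $\hess^h(V) > 0$, equivalently $\ric_h > -n h$, on $M \setminus \bD_R$. I would choose $R$ large enough to enforce this. The existence of such an $R$ should follow from the expected asymptotic expansion $V = v e^\rho + O(1)$ at infinity (discussed in the remark following Theorem \ref{static}), which combined with \eqref{ric} and \eqref{delta} gives $\hess^h(V) = V h + o(V)$ for $\rho$ large, hence positive definite outside a sufficiently large compact set. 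Rigorously justifying this asymptotic analysis from the static equations $\triangle V = nV$, $\scal_h = -n(n-1)$ and the single weak hypothesis $\langle dV, d\rho\rangle > 0$ — along the lines of \cite{HS, GicquaudCompactification} — is where the main work of the proof lies.
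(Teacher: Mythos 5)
Your verification of the two convexity conditions is sound and essentially parallels the paper's computation: the paper extends $\rho$ to $\bS^1\times M$ by making it constant along the circles and computes $\nabla^{(g)}_{ij}\rho=\nabla^{(h)}_{ij}\rho$, $\nabla^{(g)}_{0i}\rho=0$, $\nabla^{(g)}_{00}\rho=V\left\<d\rho,dV\right\>$, so that $\hess^g(\rho)$ is positive definite outside $\bD$ --- which is exactly the content of your boundary-convexity and total-convexity arguments. (The paper then feeds this into the criterion of \cite[Lemma 2.5.11]{GicquaudThesis} rather than verifying Definition \ref{essentialset} directly, but that is a cosmetic difference.)

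The genuine gap is exactly where you locate it: the negativity of the mixed sectional curvatures $\sec_g(X,U)=-V^{-1}\hess^h(V)(X,X)=-\ric_h(X,X)-n$. Nothing in the hypotheses ``$(M,h)$ has an essential set'' and ``$\left\<d\rho,dV\right\>>0$ outside $\bD$'' forces $\ric_h>-nh$ near infinity, and the expansion $V=ve^{\rho}+O(1)$ that you invoke is precisely what is not available here; deferring to it leaves the proof incomplete. The paper closes this point by a completely different mechanism, using the integral hypothesis that is in force where the lemma is applied (Theorem \ref{static}): by Lemma \ref{WE}, $\int_M V|E|_h^p\,dV_h<\infty$ gives $\weyl_g\in L^p(\bS^1\times M,g)$; the condition $\left\<d\rho,dV\right\>>0$ implies $\inf V=\inf_{\bD}V>0$, so the circle fibers do not collapse and the injectivity radius of $g$ is bounded below; the elliptic-regularity argument of Lemma \ref{lmDecayWeyl} then yields $|\weyl_g|\to 0$ at infinity, and since $g$ is Einstein with $\riem_g=\bfK+\weyl_g$, all sectional curvatures of $g$ tend to $-1$, hence are negative outside a compact set. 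If you want to salvage your direct approach, you should import this Weyl-decay argument (and the $L^p$ hypothesis it requires) rather than attempt an asymptotic analysis of $V$.
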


\begin{proof}
From \cite[Lemma 2.5.11]{GicquaudThesis}, the existence of an essential set is a consequence of the following two facts:
\begin{enumerate}
 \item $\sec_g < 0$ outside a compact subset $K \subset \bS^1 \times M$,
 \item there exists a proper smooth function $f$ whose Hessian is positive definite outside $K$.
\end{enumerate}

We note that the assumption $\left\<d\rho, dV\right\> >0$ outside $\cD$ implies in particular that $\inf V = \inf_{\cD} V$ ($V$ grows along the
gradient lines of $\rho$). This implies that the metric $g$ has injectivity radius bounded from below. Then, mimicking the proof of Lemma \ref{lmDecayWeyl}
and using Lemma \ref{WE}, we get that $|\weyl_g| \to 0$ at infinity. This proves the first point.

Next we extend $\rho$ to $\bS^1 \times M$ by making it constant along the circles $\bS^1$. The Hessian of $\rho$ can be computed explicitely:
\[
\begin{aligned}
 \nabla_{ij}^{(g)} \rho & = \nabla_{ij}^{(h)} \rho,\\
 \nabla_{0i}^{(g)} \rho & = 0,\\
 \nabla_{00}^{(g)} \rho & = V \left\<d\rho, dV\right\>.
\end{aligned}
\]
It is then straightforward to see from the assumptions that $\hess^g(\rho)$ is positive definite outside $\cD$. This proves the lemma.
\end{proof}

Theorem \ref{static} is then a consequence of Theorem \ref{mainthm} applied to the metric $g = V^2 d\theta^2 + h$.

If we further assume that $M$ is spin, then we fall into the assumptions of \cite[Theorem 1]{W2} (See also Theorem 1.2 in \cite{Q}) so we get
the following theorem:

\begin{thm}\label{thmAdS}
Suppose that $(M^n,h,V)$ is a static vacuum with $n \geq 6$. Assume further that
\begin{enumerate}
 \item $M$ is spin,
 \item $(M, h)$ has an essential set $\bD$,
 \item $\left\<d\rho, dV\right\> >0$ outside $\bD$,
 \item and $\int_{M^n}V |E|_h^p dV_h < \infty$ for some $p \in \left(1; \frac{n}{2}\right)$
\end{enumerate}
then $(M^n,h)$ is the hyperbolic space and $V = \cosh(r)$, where $r$ is the distance function to a certain point $x_0 \in M$.
Equivalently, the spacetime $(\bR \times M, -V^2 dt^2 + h)$ is the anti-deSitter space.
\end{thm}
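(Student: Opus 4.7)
The plan is straightforward: combine the curvature decay produced by Theorem~\ref{static} with the spinorial rigidity theorem for asymptotically anti-deSitter spacetimes from \cite{W2} (or \cite{Q}).

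First I would apply Theorem~\ref{static} to the triple $(M^n, h, V)$. The four assumptions of Theorem~\ref{thmAdS} are tailored precisely so that the hypotheses of Theorem~\ref{static} are satisfied: $n \geq 6$, the existence of an essential set, the sign condition $\<dV, d\rho\> > 0$, and the integral bound on $V|E|_h^p$ with $p \in (1, n/2)$. The conclusion therefore yields
\[
|E|_h \leq C e^{-(n+2)\rho}, \qquad \left|\frac{\hess(V)}{V} - h\right|_h \leq C e^{-(n+2)\rho}.
\]
In particular $|\riem_h - \bfK|_h$ and $|\ric_h + (n-1)h|_h$ both decay faster than $e^{-n\rho}$, so $(M^n, h)$ is asymptotically hyperbolic in a strong sense.

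Next I would translate this decay into the asymptotic structure required by the rigidity theorem. Arguing as in Lemma~\ref{essential}, the product manifold $(\bS^1 \times M, g = V^2 d\theta^2 + h)$ is Einstein and admits an essential set, and its Weyl tensor decays exponentially. Via Proposition~\ref{propHQS} (applied on $M$ itself, using the decay of $E$ together with a bootstrapped bound on $\nabla E$ obtained just as in the proof of Theorem~\ref{rigidity} from \cite[Theorem 4.3]{BG}), the Riemannian manifold $(M, h)$ is $C^{2,\mu}$-conformally compact. Combined with the expansion $V = v e^{\rho} + O(1)$ mentioned in the remark following Theorem~\ref{static}, this identifies $(\bR \times M, -V^2 dt^2 + h)$ as an asymptotically anti-deSitter static spacetime with conformally flat infinity (Theorem~\ref{curvaturebehavior}) and with vanishing mass aspect at infinity, since $|E|_h = o(e^{-(n+1)\rho})$ forces the coefficient of the leading non-trivial term in the Fefferman--Graham expansion to vanish.

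Finally, invoking the spin hypothesis on $M$, the rigidity result \cite[Theorem 1]{W2} (or equivalently \cite[Theorem 1.2]{Q}) applies: any spin asymptotically AdS static vacuum with vanishing mass must be the anti-deSitter spacetime itself, so $(M, h) \cong \bH^n$ and $V = \cosh(r)$ for $r$ the distance to some point $x_0 \in M$. The main obstacle I anticipate is purely one of bookkeeping --- checking that the exponential decay rate $e^{-(n+2)\rho}$ provided by Theorem~\ref{static} is strong enough to meet whatever asymptotic flatness/mass-decay hypothesis is imposed in \cite{W2}; since $n+2$ exceeds the critical exponent $n$ at which the mass aspect lives, this should be comfortable, but the precise conversion between the intrinsic decay in $\rho$ and the conformally compact decay in the boundary defining function needs to be spelled out.
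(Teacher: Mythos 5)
Your proposal matches the paper's argument: the paper likewise deduces Theorem \ref{thmAdS} by combining the decay estimates of Theorem \ref{static} with the spinorial rigidity result of \cite[Theorem 1]{W2} (equivalently \cite[Theorem 1.2]{Q}), merely stating in one line that the spin hypothesis places the manifold within the scope of that theorem. Your additional bookkeeping on conformal compactness and the vanishing of the mass aspect is a reasonable elaboration of what the paper leaves implicit.
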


\bibliographystyle{amsplain}
\bibliography{biblio-AH-Lp-rigidity}

\end{document}